\newtheorem{Teo}{Theorem}
\newtheorem{Lema}{Lemma}[section]
\newtheorem{Prop}[Lema]{Proposition}
\newtheorem{Conj}{Conjecture}
\newtheorem{Cor}{Corollary}
\theoremstyle{definition}
\newtheorem{Def}[Lema]{Definition}
\theoremstyle{remark}
\newtheorem{remark}{Remark}
\newcommand*{\dsty}{\displaystyle}
\newcommand*{\fun}[3]{#1: #2 \rightarrow #3}
\newcommand*{\Ends}[1]{\mathrm{Ends}(#1)}
\newcommand*{\Endsg}[1]{\mathrm{Ends}^{*}(#1)}
\newcommand*{\comp}[1]{\mathcal{C}(#1)}
\newcommand*{\nons}[1]{\mathcal{N}(#1)}
\newcommand*{\gs}[1]{\mathcal{G}(#1)}
\newcommand*{\gra}[1]{\mathcal{C}^{1}(#1)}
\newcommand*{\Aut}[1]{\mathrm{Aut}(#1)}
\newcommand*{\adj}[1]{\mathcal{A}(#1)}
\newcommand*{\MCG}[1]{\mathrm{MCG}^{*}(#1)}
\newcommand{\df}[1]{{\bf{#1}}}
\def\N{\mathbb{N}}
\title{Automorphism groups of simplicial complexes of infinite type surfaces}
\author[1]{Jes\'{u}s Hern\'{a}ndez Hern\'{a}ndez}
\author[2]{ Jos\'{e} Ferr\'{a}n Valdez Lorenzo}
\affil[1]{Aix-Marseille Universit\'e, 39 rue F. Joliot Curie, 13453 Marseille Cedex 13, France\\
\emph{e-mail: }\texttt{jhdezhdez@gmail.com} }
\affil[2]{Centro de Ciencias Matem\'aticas,
UNAM, Campus Morelia \\
58190, Morelia, Michoac\'an,  M\'exico\\
\emph{e-mail: }\texttt{ferran@matmor.unam.mx}}
\begin{document}
\maketitle
\begin{abstract}
 \indent Let $S$ be any orientable surface of infinite genus with a finite number of boundary components. In this work we consider the curve complex $\comp{S}$, the nonseparating curve complex $\nons{S}$ and the Schmutz graph $\gs{S}$ of $S$. When all the topological ends of $S$ carry genus, we show that all elements in the  automorphism groups $\Aut{\comp{S}}$, $\Aut{\nons{S}}$ and $\Aut{\gs{S}}$ are \emph{geometric}, \emph{i.e.} these groups are naturally isomorphic to  the \emph{extended} mapping class group $\MCG{S}$ of the infinite surface $S$. Finally,  we study rigidity phenomena within $\Aut{\comp{S}}$ and $\Aut{\nons{S}}$.
\end{abstract}
\section{Introduction}
\indent The mapping class group 
and the extended mapping class group of a given surface $S$, that we will denote by  $\mathrm{MCG}(S)$ and $\mathrm{MCG}^{*}(S)$ respectively, have been studied mostly when $S$ has  \emph{finite topological type}, that is, when its fundamental group is finitely generated. The main purpose of this article is the study of the natural (simplicial) action of the group $\mathrm{MCG}^{*}(S)$ on two abstract simplicial complexes and one simplicial graph associated to $S$ when the surface $S$ has infinite genus. These complexes and graph are:
\begin{enumerate}
\item The  \emph{curve complex} $\comp{S}$. This is the abstract simplicial complex whose vertices are the (isotopy classes of) essential curves in $S$, and whose simplexes are multicurves of finite cardinality. It was introduced by Harvey in 1978.
\item The \emph{nonseparating curve complex} $\nons{S}$. This is the simplicial subcomplex of $\comp{S}$ formed by all nonseparating curves, that is, all the (isotopy classes of) essential curves $\alpha$ such that $S\setminus \alpha$ is connected. This was first introduced by Schmutz in \cite{Sch}.
\item The \emph{Schmutz graph} $\gs{S}$. Introduced by Paul Schmutz Schaller in \cite{Sch}, this is the simplicial graph whose vertex set is the same as the vertex set of $\nons{S}$, and two vertices span an edge whenever their geometric intersection number is 1. It is also known as \emph{a modified complex of nonseparating curves} (see \cite{Farb}), for it can be thought as a 1-dimensional simplicial complex.
\end{enumerate}
Recall that any \emph{orientable} surface of infinite topological type is completely determined, up to homeomorphism, by its genus $g(S)\in\N\cup\{\infty\}$, and a nested pair of topological spaces $\Endsg{S} \subset \Ends{S}$. Roughly speaking, $\Ends{S}$ are the \emph{topological ends} of $S$ and $\Endsg{S}$ is formed by those that carry (infinite) genus. We will focus our attention on infinite genus surfaces $S$ for which the boundary $\partial S$ has finitely many connected components (possibly none) and $\Endsg{S}=\Ends{S}$.\\
\indent To each of the aforementioned simplicial complexes one can associate its automorphism group. We denote these groups by $\Aut{\comp{S}}$, $\Aut{\nons{S}}$ and $\Aut{\gs{S}}$ respectively. For surfaces of finite topological type of positive genus (with the exception of the two-holed torus), every element in $\Aut{\comp{S}}$, $\Aut{\nons{S}}$ and $\Aut{\gs{S}}$ is \emph{geometric}.  That is, if $X= \comp{S}, \nons{S}$ or $\gs{S}$, then the natural map:
\begin{equation}
	\label{E:natmap}
 \begin{tabular}{rrcl}
 $\Psi_{X}:$ & $\MCG{S}$ & $\longrightarrow$ & $\Aut{X}$\\
 & $[h]$ & $\mapsto$ & $h_{*}$
 \end{tabular}
\end{equation}
where $h_{*}$ is given by $h_{*}([\alpha]) = [h(\alpha)]$ is an isomorphism. This result is due to Ivanov \cite{Ivanov} for $X=\comp{S}$, to  Irmak and Schmutz \cite{Irmak}, \cite{Sch} for $X=\nons{S}$  and to Schmutz  \cite{Sch} when $X=\gs{S}$. The main purpose of this article is to extend this result for surfaces of infinite genus:
\begin{Teo}
	\label{TeoISO}
Let $S$ be an infinite genus surface with finitely many boundary components such that $\Endsg{S} = \Ends{S}$. Then the natural map $\Psi_{X}:\MCG{S}\longrightarrow\Aut{X}$ is an isomorphism for $X=\comp{S}, \nons{S}$ or $\gs{S}$.
\end{Teo}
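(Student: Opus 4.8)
The plan is to show that $\Psi_{X}$ is a bijective homomorphism by treating injectivity and surjectivity separately, reducing the infinite-type problem to the classical finite-type theorems of Ivanov, Irmak and Schmutz through an exhaustion argument. The hypothesis $\Endsg{S}=\Ends{S}$ plays a structural role throughout: it guarantees that every complementary region of a finite multicurve carries infinite genus with all of its ends carrying genus, which both removes exceptional finite-type pieces and makes the combinatorial characterizations below uniform. Injectivity is the easier half. The map $\Psi_{X}$ is a well-defined homomorphism since a self-homeomorphism of $S$ permutes isotopy classes of curves and preserves geometric intersection numbers, hence disjointness and adjacency. Its kernel consists of mapping classes fixing the isotopy class of every vertex of $X$; since $S$ has infinite genus and every end carries genus, the vertices of $X$ (all essential curves for $\comp{S}$, all nonseparating curves for $\nons{S}$ and $\gs{S}$) contain a filling collection of nonseparating curves to which the Alexander method for infinite-type surfaces applies, forcing such a class to be trivial. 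Thus $\Psi_{X}$ is injective for all three $X$ simultaneously.

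For surjectivity I would first treat $X=\comp{S}$, proceeding in the spirit of Ivanov but replacing his maximal-simplex (pants-decomposition) arguments — unavailable here, since every simplex is finite — by a detection of finite-type subsurfaces through the \emph{join structure of links}. The key observation is that the link of a separating curve $c$ in $\comp{S}$ is a join $\comp{S_{1}}\ast\comp{S_{2}}$, where $S_{1},S_{2}$ are the components of $S\setminus c$, whereas the link of a nonseparating curve $\alpha$ is $\comp{S\setminus\alpha}$ with $S\setminus\alpha$ connected of infinite genus, hence not a join. Since joins are preserved by automorphisms, any $\phi\in\Aut{\comp{S}}$ preserves the set of separating curves and, for each such curve, the partition of its link into the two sides. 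Moreover a side of finite type is combinatorially distinguishable from one of infinite type (for instance through the boundedness of the dimensions of its simplices), so $\phi$ carries a finite-type side to a finite-type side.

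Next I would fix an exhaustion $\Sigma_{1}\subset\Sigma_{2}\subset\cdots$ of $S$ by finite-type subsurfaces, each bounded by a separating multicurve, each of positive genus and non-exceptional for the finite-type theorems. Using the join detection above, $\phi$ sends the curves supported in $\Sigma_{n}$ to the curves supported in a finite-type subsurface $\Sigma_{n}'$, inducing an isomorphism of the corresponding finite-type complexes; Ivanov's theorem then yields a homeomorphism $h_{n}\colon\Sigma_{n}\to\Sigma_{n}'$, unique up to isotopy, realizing it. The remaining and, I expect, most delicate step is \emph{coherence}: one must show that $\Sigma_{n}'\subset\Sigma_{n+1}'$, that the $\Sigma_{n}'$ exhaust $S$, and that $h_{n+1}|_{\Sigma_{n}}$ is isotopic to $h_{n}$, so that the finite-type Alexander method forces the $h_{n}$ to patch into a single homeomorphism $h\colon S\to S$ with $\Psi_{\comp{S}}([h])=\phi$. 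Verifying that the image subsurfaces are nested and that the isotopies can be chosen consistently across the whole exhaustion is the main obstacle; everything else is bookkeeping.

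Finally I would deduce the cases $X=\nons{S}$ and $X=\gs{S}$ from the already-proven case of $\comp{S}$ by combinatorial reconstruction. The separating vertices of $\comp{S}$, together with its full disjointness (join) structure, are recoverable from the induced subgraph $\nons{S}$, so that every automorphism of $\nons{S}$ extends uniquely to an automorphism of $\comp{S}$; conversely a geometric automorphism of $\comp{S}$ restricts to $\nons{S}$, giving $\Aut{\nons{S}}\cong\Aut{\comp{S}}\cong\MCG{S}$. For $\gs{S}$ one uses, as in Schmutz's finite-type argument, that the disjointness relation defining the edges of $\nons{S}$ and the intersection-one relation defining the edges of $\gs{S}$ are mutually definable on the common vertex set, whence $\Aut{\gs{S}}\cong\Aut{\nons{S}}$. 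Combining these reductions with the result for $\comp{S}$ establishes the theorem for all three complexes.
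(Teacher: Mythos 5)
Your architecture inverts the paper's, and the inversion is where the real gap lies. You prove surjectivity first for $\comp{S}$ (via an exhaustion, join-detection of separating curves, and finite-type rigidity -- a genuinely different and, in outline, plausible route) and then deduce the cases $\nons{S}$ and $\gs{S}$ from the single assertion that the separating vertices of $\comp{S}$, together with the disjointness structure, are ``recoverable from the induced subgraph $\nons{S}$,'' so that every automorphism of $\nons{S}$ extends to one of $\comp{S}$. That extension statement is not a formality: it is precisely the hard direction, and nothing in your sketch proves it. Restriction from $\comp{S}$ to $\nons{S}$ is easy once automorphisms of $\comp{S}$ are known to be geometric, but going up -- producing, from an automorphism $\phi$ of $\nons{S}$, a well-defined image for every separating curve -- requires showing that the disjointness pattern that a separating curve cuts out in $\nons{S}$ is carried by $\phi$ to a pattern again realized by an actual separating curve. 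This reconstruction is exactly what the paper's technical machinery accomplishes, in the opposite order: it first proves surjectivity for $\gs{S}$ (hence $\nons{S}$) directly, via Schmutz-type lemmas on triples (Lemmas \ref{Schmutz3} and \ref{Schmutz3prime}), an induction showing automorphisms of $\gs{S}$ preserve all geometric intersection numbers (Lemma \ref{Intn}), realization of a pants decomposition by a homeomorphism (Lemma \ref{homeopantsdec}), and Dehn--Thurston coordinates together with Lemma \ref{Sch04}; only then does it pass to $\comp{S}$ (theorem \ref{surCS}), where the remaining, easier task is to check that the realizing homeomorphism agrees with $\phi$ on separating curves by ``catching'' each one in an annulus or a pair of pants. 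Without a proof of your reconstruction claim, your argument establishes at best the theorem for $\comp{S}$ alone.

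Within the $\comp{S}$ argument itself there are two further problems. First, your join criterion misclassifies outer separating curves: when $S$ has at least two boundary components, a curve cutting off a pair of pants is separating, yet one side of its link contains no essential curves, so its link is the curve complex of a connected surface and is \emph{not} a join -- indistinguishable, by your test, from a nonseparating curve. This is repairable, because the curves you actually need to control (the boundaries of the exhaustion $\Sigma_{n}$) can be chosen non-outer, and ``link is a join'' does characterize non-outer separating curves; but the blanket claim that any $\phi\in\Aut{\comp{S}}$ preserves the set of separating curves is false as stated. Second, your black boxes are heavier than acknowledged: the induced maps $\comp{\Sigma_{n}}\to\comp{\Sigma_{n}'}$ are isomorphisms between complexes of a priori \emph{different} finite-type surfaces, so you need isomorphism rigidity in the style of Korkmaz and Luo, not Ivanov's automorphism theorem; and both the ``coherence'' step you defer (nestedness of the $\Sigma_{n}'$, exhaustion of $S$, compatibility of the isotopies so that the $h_{n}$ patch) and the ``Alexander method for infinite-type surfaces'' you invoke for injectivity were not available off the shelf -- they are the content of theorem \ref{injCS} and Lemma \ref{L:HOM}, whose compatibility condition on homotopies across the exhaustion is the most delicate point of the paper. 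Identifying coherence as ``the main obstacle'' is accurate, but it leaves the proposal incomplete exactly where completeness is needed.
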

The techniques that we use to prove this result rely heavily on the hypothesis $\Endsg{S} = \Ends{S}$. However, we suspect that this theorem remains valid for surfaces with arbitrarily many planar ends. In addition to the study of the action of $\mathrm{MCG}^{*}(S)$ on simplicial complexes, we study rigidity phenomena within the curve complex and the nonseparating curve complex. More precisely: 
%
\begin{Teo}
	\label{S1homeoS2}
 Let $S_{1}$ and $S_{2}$ be infinite genus surfaces with finitely many boundary components, such that $\Ends{S_{i}} = \Endsg{S_{i}}$ for $i = 1,2$ and let $\fun{\phi}{\comp{S_{1}}}{\comp{S_{2}}}$ be an isomorphism. Then $S_{1}$ is homeomorphic to $S_{2}$.
\end{Teo}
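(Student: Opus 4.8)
The plan is to reconstruct, directly from the abstract isomorphism type of the curve complex, the topological invariants that classify $S_i$ up to homeomorphism. By the classification of orientable surfaces of infinite type (Ker\'ekj\'art\'o--Richards), two such surfaces are homeomorphic precisely when they have the same genus, the same number of boundary components, and homeomorphic pairs $(\Ends{\cdot},\Endsg{\cdot})$. Since $g(S_1)=g(S_2)=\infty$ and $\Ends{S_i}=\Endsg{S_i}$ by hypothesis, it suffices to prove that $S_1$ and $S_2$ have the same number of boundary components and that $\Ends{S_1}$ is homeomorphic to $\Ends{S_2}$. I would frame the whole argument as the single assertion that $\phi$ carries an intrinsic, purely combinatorial dictionary on $\comp{S_1}$ onto the corresponding one on $\comp{S_2}$.

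First I would isolate the combinatorial characterizations of topological features already established in the proof of Theorem \ref{TeoISO}. The ones I need are: a vertex is nonseparating if and only if a suitable combinatorial condition on its star holds, so that $\phi$ restricts to an isomorphism $\nons{S_1}\to\nons{S_2}$; and a separating vertex $\alpha$ is recognized by the splitting of its link, with the topological types of the two complementary pieces (pair of pants, one-holed torus, planar versus genus-bearing, number of boundary and puncture slots) readable from the induced subcomplexes on each side. Because each of these conditions is phrased only in terms of vertices, adjacency, and simplices, both $\phi$ and $\phi^{-1}$ preserve them. In particular $\phi$ matches curves that cut off a fixed finite-type piece, which recovers the number of boundary components of $S_1$ as the corresponding count in $S_2$.

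The heart of the argument, and the step I expect to be the main obstacle, is the reconstruction of the end space. The idea is to encode an exhaustion $S_i=\bigcup_n \Sigma_n^{(i)}$ by finite-type subsurfaces through a nested family of finite multicurves $\partial\Sigma_n^{(i)}$, which form a cofinal family of simplices of $\comp{S_i}$ ordered by the combinatorial relation ``cuts off a larger subsurface''. The space of ends is then homeomorphic to the inverse limit $\varprojlim_n \pi_0(S_i\setminus\Sigma_n^{(i)})$, and I would show that $\phi$, respecting disjointness and this cutting relation, sends an exhaustion of $S_1$ to one of $S_2$ and induces a bijection between the two inverse systems of complementary regions. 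The delicate points are twofold: first, showing that the combinatorial relation detects exactly the poset of complementary regions, including which regions are end-neighborhoods and which carry genus, for which I would invoke the type-recognition of the previous paragraph; and second, verifying that the resulting bijection on $\varprojlim_n \pi_0$ is a homeomorphism of $\Ends{S_1}$ onto $\Ends{S_2}$, i.e. that it and its inverse are continuous for the standard topology on the ends. Once this homeomorphism of end spaces is produced, together with the equality of boundary counts and the common infinite genus, the classification theorem yields $S_1\cong S_2$.
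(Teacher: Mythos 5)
Your high-level skeleton (reduce to the classification theorem, then recover the boundary count and the end space from the combinatorics) matches the paper's, but the heart of your end-space argument contains a genuine gap: the claim that $\phi$, ``respecting disjointness and this cutting relation, sends an exhaustion of $S_1$ to one of $S_2$,'' and hence that the transferred inverse system computes $\Ends{S_2}$. This does not follow from simpliciality, and it is precisely the step where the hypothesis $\Ends{S_i}=\Endsg{S_i}$ must do real work. The paper's own remark after Theorem \ref{S1homeoS2} shows why: if $S'$ is obtained from $S$ by gluing a punctured disc onto a boundary component, then $\comp{S}\cong\comp{S'}$, and under this isomorphism the regions of $S'$ determined combinatorially by the images $\phi(\partial\Sigma^{(1)}_n)$ are the subsurfaces $\Sigma^{(1)}_n$ with the punctured disc attached. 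These regions have the same finite complexity, contain every essential curve of $S'$ eventually, and satisfy all your combinatorial ``cutting'' relations, yet they are not compact and their complements never see the puncture; the inverse limit $\varprojlim_n \pi_0(S'\setminus\cdot)$ over this family computes $\Ends{S}$, not $\Ends{S'}$. In short, planar ends are invisible to the curve complex, so no purely formal transfer of an exhaustion can work; one must prove, using the absence of planar ends, that every end of $S_2$ is detected by the image family (e.g.\ that each complementary region of each image multicurve carries infinite genus and that exactly one region is of finite type). Your two flagged ``delicate points'' sit downstream of this unaddressed problem rather than containing it. A secondary under-justified step is your assertion that the type of complementary pieces, including ``number of boundary and puncture slots,'' is readable from induced subcomplexes: the same counterexample shows the complex cannot distinguish a boundary component from a puncture, which is why the paper recovers the boundary count not from induced subcomplexes but from the preservation of peripheral pairs (Lemmas \ref{pairofpants} and \ref{peripheralpairs}), proved via pants decompositions and degree counts in adjacency graphs.

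For contrast, the paper absorbs the whole difficulty into one prior result: for a pants decomposition $P$, Theorem \ref{GraphEndsSurfaceEnds} gives $\Ends{\adj{P}}\cong\Ends{S}$ whenever $\Ends{S}=\Endsg{S}$ (its surjectivity step is exactly where ``every end carries genus, hence is filled by pants'' is used). Since $\phi$ preserves pants decompositions and adjacency, it induces a graph isomorphism $\adj{P}\to\adj{\phi(P)}$, which tautologically yields a homeomorphism of graph end spaces, and hence $\Ends{S_1}\cong\Ends{S_2}$ with no cofinality or continuity verification needed. If you want to salvage your inverse-limit route, the missing lemma you must prove is the combinatorial recognition of the finite-type side of an image multicurve together with the cofinality statement above; doing so amounts to re-proving Theorem \ref{GraphEndsSurfaceEnds} in your language.
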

As we will see in section \S \ref{SS:rigidity} this result is not valid if we allow the infinite genus surface $S$ to have planar ends. In \S \ref{SS:rigidity} we will also see that the tools used in the proof of this theorem work for nonseparating curves. Hence, we have the following:
\begin{Cor}
	\label{CorIsoNSC}
 Let $S_{1}$ and $S_{2}$ be infinite genus surfaces with finitely many boundary components, such that $\Ends{S_{i}} = \Endsg{S_{i}}$ for $i = 1,2$ and  let $\fun{\phi}{\mathcal{N}(S_{1})}{\mathcal{N}(S_{2})}$ be an isomorphism. Then $S_{1}$ is homeomorphic to $S_{2}$.
\end{Cor}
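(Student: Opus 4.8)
The plan is to mirror the proof of Theorem \ref{S1homeoS2}, verifying that every step carried out there for $\comp{S}$ can be performed using only nonseparating curves and therefore survives the passage to $\nons{S}$. Since $S_1$ and $S_2$ both have infinite genus and finitely many boundary components, by the classification of orientable surfaces of infinite topological type it suffices to establish two things: that $S_1$ and $S_2$ have the same number of boundary components, and that $\Ends{S_1}$ is homeomorphic to $\Ends{S_2}$. (Recall that $\Ends{S_i} = \Endsg{S_i}$, so the end data is completely determined by the topological space $\Ends{S_i}$, and the genus is $\infty$ for both, hence needs no recovering.) Both invariants will be read off from the combinatorial structure preserved by $\phi$.

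First I would set up the combinatorial dictionary. An isomorphism $\fun{\phi}{\nons{S_1}}{\nons{S_2}}$ preserves adjacency, hence sends multicurves of nonseparating curves to multicurves of nonseparating curves and respects their incidence patterns. The essential observation is that, although $\nons{S}$ contains no separating vertices, one can still detect \emph{separating configurations}: finite sets of pairwise disjoint nonseparating curves whose union separates $S$. In particular, whenever two disjoint nonseparating curves $\alpha,\beta$ cobound (so that $\alpha\cup\beta$ separates $S$), the pair singles out a complementary region. Such configurations are characterized purely in terms of disjointness together with the existence or non-existence of connecting nonseparating curves, so $\phi$ carries separating configurations to separating configurations and respects the nesting relation between the subsurfaces they cut off. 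This is exactly the information used in the proof of Theorem \ref{S1homeoS2}, now re-expressed so as to avoid separating vertices entirely.

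Next I would recover the invariants. The boundary count is read, as in the proof of Theorem \ref{S1homeoS2}, from the combinatorics of those separating configurations that cut off a boundary-adjacent piece; finiteness of $\partial S_i$ guarantees the count is well defined, and $\phi$ preserves it. To recover the end space I would fix a principal exhaustion of each $S_i$ by finite-type subsurfaces and encode each end as an equivalence class of nested sequences of separating configurations converging to it. The map $\phi$ sends a nested sequence in $S_1$ to a nested sequence in $S_2$, and this assignment descends to a map $\partial\phi\colon\Ends{S_1}\to\Ends{S_2}$; applying the same construction to $\phi^{-1}$ produces a two-sided inverse, so $\partial\phi$ is a bijection.

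The hard part will be the final step: promoting $\partial\phi$ from a set bijection to a homeomorphism. The delicate point, special to $\nons{S}$, is that one must certify that separating \emph{pairs} (and finite separating configurations) of nonseparating curves suffice to separate and to topologize \emph{all} ends — including ends accumulated by others — so that the clopen sets they define form a basis for $\Ends{S_i}$. This is precisely where the hypothesis $\Ends{S_i} = \Endsg{S_i}$ is indispensable: since every end carries genus, each end can be approached by cobounding nonseparating curves, which is what makes these configurations rich enough to generate the topology, and is exactly the reason the planar-end case must be excluded (cf. \S\ref{SS:rigidity}). Once the neighborhood bases are shown to correspond under $\phi$, continuity of $\partial\phi$ and of its inverse is automatic, and together with the boundary count this yields $S_1\cong S_2$.
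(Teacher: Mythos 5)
Your reduction is the right one---by the classification theorem it suffices to match the number of boundary components and the end spaces, and this is exactly how the paper proceeds---but the body of your argument has a genuine gap: every step where combinatorics must be converted into topology is asserted rather than proved. The crux is your claim that ``separating configurations'' (finite sets of pairwise disjoint nonseparating curves with separating union) are \emph{characterized purely in terms of disjointness together with the existence or non-existence of connecting nonseparating curves}, so that $\phi$ preserves them, preserves their nesting, and preserves which of them cut off boundary-adjacent pieces. None of this is established, and it is precisely the mathematical content of the corollary. Remember that the simplicial structure of $\nons{S}$ records only disjointness. For instance, for a disjoint pair $\alpha,\beta$ the union $\alpha\cup\beta$ separates $S$ if and only if there is \emph{no} curve $\gamma$ with $i(\gamma,\alpha)=1$ and $i(\gamma,\beta)=0$; so even the most basic detection statement you need already requires knowing that an isomorphism of $\nons{S}$ preserves intersection number one. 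That is a nontrivial theorem of Irmak \cite{Irmak}---it is what the paper invokes in the proof of Theorem \ref{Teo6}---and it does not follow from preservation of disjointness alone; you never cite it or any substitute for it. The same objection applies to your recovery of the boundary count (nothing in your sketch says how the complex sees that a configuration cuts off a piece meeting $\partial S$) and to your final clopen-basis argument for $\partial\phi$ being a homeomorphism: each of these is a combinatorial-rigidity statement of essentially the same order of difficulty as the corollary itself, so the proposal is a plan whose hard parts are all deferred.

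There is also an irony worth flagging: you announce that you will ``mirror the proof of Theorem \ref{S1homeoS2},'' but the actual mirror is much more direct and avoids your separating configurations entirely, which is why the paper's proof is three lines long. Since $\Ends{S_i}=\Endsg{S_i}$, one may choose pants decompositions $P$ consisting solely of nonseparating curves (no complementary piece can fail to be a pair of pants when every end carries genus); an isomorphism $\fun{\phi}{\nons{S_1}}{\nons{S_2}}$ still carries $P$ to a pants decomposition and still preserves adjacency, because the Shackleton-type arguments and Lemmas \ref{pairofpants} and \ref{peripheralpairs} remain valid with $\comp{S}$ replaced by $\nons{S}$ (cf.\ Remark \ref{R:changing}); hence $\phi$ induces an isomorphism $\adj{P}\cong\adj{\phi(P)}$, Theorem \ref{GraphEndsSurfaceEnds} converts this into $\Ends{S_1}\cong\Ends{S_2}$, and Lemma \ref{peripheralpairs} gives equality of the boundary counts. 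If you wish to salvage your outline, the missing work is exactly to prove your detection claims, and the efficient way to do so is to reuse this already-established machinery (nonseparating pants decompositions, their adjacency graphs, and the ends of graphs) rather than to introduce new configurations whose $\phi$-invariance is left unverified.
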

As we will see in section \S \ref{S:CFE}, contrary to the compact case, this kind of rigidity results cannot be extended to injective simplicial maps when $S$ is an infinite genus surface. \\
\indent We must remark that while the results of this article are highly inspired by those of the compact case, many proofs have been either modified or outright rewritten to accommodate for the infinite type surfaces. We have also stablished new results and techniques on which the main results of this article rely. Among these we underline the relation between $\Ends{S}$ and the space of ends of the adjacency graph of a pants decomposition of $S$ (see \S \ref{S:adjg}, theorem \ref{GraphEndsSurfaceEnds}) and a variant of the Alexander method for infinite type surfaces (see \S \ref{injCS}, theorem  \ref{GraphEndsSurfaceEnds}).\\
\indent We refer the reader to \cite{Fou1}, \cite{Fou2} and \cite{Fuji} for previous work on groups formed by mapping classes of infinite type surfaces. We want to stress, however, that the cited authors focus their work on several subgroups of what we here call the mapping class group (\emph{e.g.} those with assymptotic qualities for a specific surface or quasiconformal automorphisms of a Riemann surface) and on their action on the Teichm\"uller space.\\[0.3cm]
\emph{Acknowledgements}. We want to thank Camilo Ram\'irez Maluendas for the question that lead to the creation of this article. We are greateful to Hamish Short and Javier Aramayona for carefully reading preliminary versions of this text. The first author would like to thanks Daniel Juan Pineda for his support during the realization of this project. The second author was generously supported by LAISLA, CONACYT CB-2009-01 127991 and PAPIIT projects IN103411 \& IB100212 during the realization of this project.

\section{Preliminaries}

\subsection{Topological invariants for infinite type surfaces}
Let $X$ be a locally compact, locally connected, connected Hausdorff space.

\begin{Def}\cite{F}
\label{D:ends}
Let $U_{1}\supseteq U_{2}\supseteq\ldots$ be an infinite sequence of non-empty connected open subsets of $X$ such that for each $i\in\N$ the boundary $\partial U_{i}$ is compact and $\bigcap\limits_{i\in\N}\overline{U_{i}}=\emptyset$. Two such sequences $U_{1}\supseteq U_{2}\supseteq\ldots$ and $U'_{1}\supseteq U'_{2}\supseteq\ldots$ are said to be equivalent if for every $i\in\N$ there exist $j,k$ such that $U_{i}\supseteq U'_{j}$ and $U'_{i}\supseteq U_{k}$. The corresponding equivalence class is called a \df{topological end} of $X$.  
\end{Def}
The set of ends ${\rm Ends}(X)$ of $X$ can be endowed with a topology in the following way. For any set $U$ in $X$ whose boundary is compact, we define $U^{*}$ to be the set of all ends $[U_{1}\supseteq U_{2}\supseteq\ldots]$ for which there is a representative such that $U_{n}\subset U$ for $n$ sufficiently large. With respect to this topology, 
${\rm Ends}(X)$ is a compact, closed, totally disconnected space without interior points (see for example Theorem 1.5, \cite{Ray}).\\
\indent The \textbf{genus} of a surface is the maximum of the genera of its compact subsurfaces.  A surface is said to be \textbf{planar} if all of its compact subsurfaces are of genus zero. We define $\Endsg{S}\subset{\rm Ends}(S)$ as the set of all ends which are not planar. As stated in the following theorem, any orientable surface is determined, up to homeomorphism, by its genus, boundary and space of ends. Henceforth all surfaces in this text are connected. 
\begin{Teo} 
	\label{T:NCSclass}
	Let $S$ and $S'$ be two orientable surfaces of the same genus. Then $S$ and $S'$ are homeomorphic if and only if they have the same number of boundary components, and $\Endsg{S}\subset{\rm Ends}(S)$ and $\Endsg{S'}\subset{\rm Ends}(S')$ are homeomorphic as nested topological spaces.
\end{Teo}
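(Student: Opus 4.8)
The plan is to prove the two implications separately, with the forward (necessity) direction being routine and the converse (sufficiency) carrying all the real work; this is the classification theorem of Kerékjártó and Richards, and I would follow the exhaustion-and-matching strategy. For necessity, suppose $f\colon S\to S'$ is a homeomorphism. Then $f$ carries compact subsurfaces to compact subsurfaces, so it preserves the genus and the number of boundary components. Being a homeomorphism it is proper, hence sends a neighborhood basis of an end to a neighborhood basis of an end and induces a homeomorphism $f_{*}\colon\Ends{S}\to\Ends{S'}$. Since $f$ restricts to a homeomorphism between the corresponding end-neighborhoods, it preserves the genus of those neighborhoods, so $f_{*}(\Endsg{S})=\Endsg{S'}$. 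Thus $f_{*}$ is a homeomorphism of the nested pairs, which is exactly what the statement requires.

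For sufficiency, fix efficient exhaustions $P_{1}\subset P_{2}\subset\cdots$ of $S$ and $P'_{1}\subset P'_{2}\subset\cdots$ of $S'$ by compact connected subsurfaces with $P_{n}\subset\mathrm{int}\,P_{n+1}$ and $\bigcup_{n}P_{n}=S$, arranged so that $P_{1}$ already contains the finitely many boundary curves of $S$ and that no complementary component of any $P_{n}$ is relatively compact. Then $\partial P_{n}$ splits as $\partial S$ together with finitely many interior \emph{cutting curves}, and each cutting curve determines a clopen subset of $\Ends{S}$, namely the ends lying beyond it; the collection of these sets is a clopen partition $\mathcal{Q}_{n}$ of $\Ends{S}$. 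As $n\to\infty$ the partitions $\mathcal{Q}_{n}$ refine and generate the topology, so that $\Ends{S}=\varprojlim_{n}\mathcal{Q}_{n}$, and similarly for $S'$.

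The heart of the argument is to interleave and refine the two exhaustions so that they become compatible under the given homeomorphism $h\colon(\Ends{S},\Endsg{S})\to(\Ends{S'},\Endsg{S'})$. Since $h$ is a homeomorphism of compact, totally disconnected spaces, after passing to a common refinement of the $\mathcal{Q}_{n}$ and $\mathcal{Q}'_{n}$ I may assume that for each $n$ the partition of $\Ends{S}$ cut out by the cutting curves of $P_{n}$ corresponds under $h$ exactly to the partition of $\Ends{S'}$ cut out by those of $P'_{n}$, and that $h$ matches the clopen pieces meeting $\Endsg{S}$ with those meeting $\Endsg{S'}$. This end-correspondence controls the distribution of genus: a clopen piece contains a non-planar end if and only if the corresponding complementary region carries infinite genus. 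Now build the homeomorphism piece by piece. Let $\phi_{1}\colon P_{1}\to P'_{1}$ be any homeomorphism realizing the induced matching of boundary curves, which exists because the two compact pieces have equal genus and equal boundary count. Inductively extend $\phi_{n}$ over the cobordism $\overline{P_{n+1}\setminus P_{n}}$ to $\overline{P'_{n+1}\setminus P'_{n}}$: these two compact bordered surfaces are homeomorphic rel the already-matched inner boundary, since the correspondence guarantees that their outer cutting curves, their genus, and their boundary components all agree, invoking the classification of compact surfaces on each component. The union $\phi=\bigcup_{n}\phi_{n}$ is then a bijection $S\to S'$, and it is a homeomorphism because the exhaustions are locally finite and each $\phi_{n}$ extends the previous one.

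The main obstacle, and the only genuinely nontrivial point, is precisely this compatible refinement of the exhaustions: one must simultaneously arrange that the induced clopen partitions correspond under $h$ and that the finite-versus-infinite genus in each complementary region is matched. The nested hypothesis $\Endsg{S}\subset\Ends{S}$ is exactly what makes this possible, since $h$ preserves the non-planar ends and hence pins down the location of the infinite genus; distributing the remaining finite genus correctly is then achieved by refining the exhaustions further, using the compact classification by genus and boundary count as the building block at each stage.
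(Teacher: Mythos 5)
The paper does not prove this statement at all: it is quoted as the known classification theorem for noncompact surfaces, with the boundaryless case cited to Richards \cite{R} and the case with boundary to Prishlyak--Mischenko \cite{PM}. Your proposal is therefore being compared with those references rather than with an argument in the paper, and it follows essentially their route: necessity via properness of a homeomorphism and invariance of genus near each end, sufficiency via the classical Ker\'ekj\'art\'o--Richards scheme of exhausting both surfaces by compact subsurfaces, reading off clopen partitions of the end spaces from the cutting curves, and building the homeomorphism inductively over the successive cobordisms. As an outline this is correct, but you should be aware that your sentence ``after passing to a common refinement \dots I may assume'' is where essentially all of the content of \cite{R} lives: one must (i) show that any prescribed clopen partition of $\Ends{S'}$ (the $h$-image of the partition induced by $P_{n}$) is realized, after refinement, by the cutting curves of an honest compact subsurface; (ii) equalize, by a zig-zag alternation of enlargements between the two surfaces, both the \emph{number} of outer cutting curves and the \emph{genus} of each pair of corresponding cobordism components --- this is exactly where the nested hypothesis is used, since infinite genus accumulates precisely at $\Endsg{S}$, which $h$ matches with $\Endsg{S'}$, while any finite genus discrepancy in a given compact piece can be pushed into the next one; and (iii) choose the component homeomorphisms compatibly with orientations and with the already-fixed inner boundary so that they glue. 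You correctly identify (ii) as the crux and indicate the right mechanism, so I would call your text a faithful sketch of the cited proof rather than a flawed one; just do not mistake the compatible-refinement step for a formality, since stated as ``I may assume'' it is an assertion, not an argument.
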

The proof of this theorem for the case when $S$ and $S'$ have no boundary can be found in \cite{R}. The case for surfaces with boundary was proven in \cite{PM}.

\subsection{Complexes and graphs of curves}
	\label{SS:CC}
\indent There are several curve complexes that one can associate to a surface of finite genus, with finitely many boundary components and punctures. In this section we extend the definitions of these complexes to noncompact surfaces of infinite topological type and explore some of their basic properties.\\
\indent Abusing language and notation, we will call \textit{curve},  a topological embedding $S^{1} \hookrightarrow S$,  the isotopy class of this embedding and its image on $S$. A curve is said to be \emph{essential} if it is neither homotopic to a point nor to a boundary component. Hereafter all curves are considered essential unless otherwise stated. An essential curve is said to be \emph{separating} if the surface obtained by cutting $S$ along its image is disconnected. It is said to be nonseparating otherwise. A separating curve $\alpha$ is said to be an \emph{outer separating} curve if by cutting $S$ along $\alpha$ one of the resulting connected components is a pair of pants (\emph{i.e.} a genus 0 surface with three boundary components). A non-outer separating curve is a separating curve which is not an outer separating curve. Two curves are \emph{disjoint} if they are distinct and their (geometric) intersection number is $0$.

\begin{Def} [\sc Multicurves]
A multicurve is either a set of just one curve, or a pairwise disjoint and locally finite set of curves of $S$. We allow multicurves to consist of an infinite set of curves. If $M$ is a multicurve of $S$, the surface obtained by cutting $S$ along pairwise disjoint representatives of the elements of $M$ will be denoted by $S_{M}$.
\end{Def}
Infinite \emph{countable} multicurves arise in surfaces with nonfinitely generated fundamental group. Take for example the Loch  Ness Monster, that is,  a surface with infinite genus and one end. If $S$ is a compact surface of genus $g$ with $n$ boundary components, the \emph{complexity} of $S$, denoted by $\kappa(S)$, is equal to $3g-3+n$. This is the cardinality of a maximal multicurve in $S$.

\begin{Def} [\sc The Curve Complex]
The Curve complex of $S$, $\comp{S}$, is the abstract simplicial complex whose vertices are the isotopy classes of essential curves in $S$, and whose simplexes are multicurves of finite cardinality. We denote the set of vertices of $\comp{S}$ by $\mathcal{V}(\comp{S})$.
\end{Def}
The $1$-skeleton of $\comp{S}$ will be denoted by $\gra{S}$. Since every automorphism of $\comp{S}$  is determined uniquely by a function of its vertices, and the same statement is true for automorphisms of $\gra{S}$, then the groups $\Aut{\comp{S}}$ and $ \Aut{\gra{S}}$ are isomorphic.
\begin{Def} [\sc The Nonseparating Curve Complex]
 The Nonseparating curve complex of $S$, $\nons{S}$, is the subcomplex of $\comp{S}$ whose vertices are the isotopy classes of essential \emph{nonseparating} curves in $S$. We denote the set of vertices of $\nons{S}$ by $\mathcal{V}(\nons{S})$.
\end{Def}
\begin{Def} [\sc The Schmutz graph]
 The Schmutz graph of $S$, $\gs{S}$, is the simplicial graph whose vertices are the isotopy classes of essential nonseparating curves in $S$, and two vertices span an edge if their geometric intersection number is $1$.
\end{Def}
\begin{Prop}
	\label{T:HypCC}
 Let $S$ be a surface of infinite genus. Then $\comp{S}$, $\nons{S}$ and $\gs{S}$ are connected.  In particular $\gra{S}$ and $\mathcal{N}^{1}(S)$ have diameter 2 while $\gs{S}$
 has diameter 4.
\end{Prop}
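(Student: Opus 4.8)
The plan is to treat the two diameter-$2$ claims uniformly and then build the diameter-$4$ bound for $\gs{S}$ on top of them. The single feature of $S$ that drives everything is that $S$ has infinite genus: any two essential simple closed curves $\alpha,\beta$ meet in finitely many points, so a regular neighborhood of $\alpha\cup\beta$ (in minimal position) sits inside a compact subsurface $K\subset S$. Since $K$ has finite genus while $S$ does not, some component of $S\setminus K$ has infinite genus, and there I can choose a simple closed curve $\gamma$ encircling a handle. Such a $\gamma$ is nonseparating in $S$ (cutting along it leaves $S$ connected, since one can travel through the remaining handles) and is disjoint from both $\alpha$ and $\beta$.

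For $\gra{S}$ and $\mathcal{N}^{1}(S)$ this $\gamma$ already gives everything. First I would observe that $\gamma\in\mathcal{V}(\nons{S})\subseteq\mathcal{V}(\comp{S})$ is adjacent (i.e. disjoint) to both $\alpha$ and $\beta$, so $\alpha-\gamma-\beta$ is a path of length $2$; hence both $1$-skeleta have diameter at most $2$, and in particular are connected. For the matching lower bound I would exhibit a pair at distance exactly $2$: inside one handle take $\alpha,\beta$ with $i(\alpha,\beta)=1$. They are nonseparating, distinct, and not disjoint, hence non-adjacent in either graph, so their distance is at least $2$ and the diameter equals $2$.

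For $\gs{S}$ the adjacency relation is ``meet exactly once'', so $\gamma$ alone is not enough and the core step is the following claim: \emph{if $\alpha\neq\beta$ are disjoint nonseparating curves then there is a nonseparating curve $\delta$ with $i(\delta,\alpha)=i(\delta,\beta)=1$, so their $\gs{S}$-distance is at most $2$}. I would prove it by producing $\delta$ as an arc crossing the two curves once. Cut $S$ along $\alpha$ to get a connected surface $S_{\alpha}$ with two boundary circles $\alpha_{1},\alpha_{2}$; it suffices to find a simple arc in $S_{\alpha}$ from $\alpha_{1}$ to $\alpha_{2}$ meeting $\beta$ exactly once, since re-gluing turns it into a simple closed curve $\delta$ with $i(\delta,\alpha)=i(\delta,\beta)=1$, which is automatically nonseparating because its algebraic intersection with $\alpha$ is $\pm1$. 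The existence of such an arc is a change-of-coordinates argument splitting on whether $\beta$ stays nonseparating in $S_{\alpha}$: if it does, $S_{\alpha}\setminus\beta$ is connected and one routes two disjoint arcs joining $\alpha_{1},\alpha_{2}$ to the two sides of $\beta$; if $\beta$ separates $S_{\alpha}$, then because $\beta$ is nonseparating in $S$ the two copies $\alpha_{1},\alpha_{2}$ must lie in different complementary pieces, and an arc through $\beta$ joining them crosses it once. Infinite genus guarantees the complementary pieces are large enough to route these arcs. Granting the claim, the upper bound is immediate: for arbitrary nonseparating $\alpha,\beta$, pick the disjoint nonseparating $\gamma$ from the first paragraph; the claim gives $d_{\gs{S}}(\alpha,\gamma)\le 2$ and $d_{\gs{S}}(\gamma,\beta)\le 2$, so $d_{\gs{S}}(\alpha,\beta)\le 4$ and $\gs{S}$ is connected.

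The hard part is the matching lower bound, i.e. producing a pair realizing distance exactly $4$, and this is where I expect the main obstacle. By the claim above any distance-$4$ pair must intersect, and one must additionally rule out common once-transversals (to exclude distance $2$) and length-$3$ paths (to exclude distance $3$). Intersecting inside a single handle is not enough — for instance the slope-$0$ and slope-$2$ curves on a one-holed torus already admit a common once-transversal — so the extremal configuration cannot be supported on a small subsurface. The plan is to exhibit explicit nonseparating $\alpha,\beta$ whose union fills a subsurface large enough that (a) no single curve meets each of them exactly once, and (b) any vertex $x$ adjacent to $\alpha$ still fails to share a common once-transversal with $\beta$; verifying (a) and (b) rests on an intersection-number estimate controlling how $i(\,\cdot\,,\beta)$ can change along a $\gs{S}$-edge. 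Establishing such an estimate and checking it for the chosen $\alpha,\beta$ is the technical heart of the diameter-$4$ assertion.
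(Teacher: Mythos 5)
Your completed arguments are correct and essentially reproduce the paper's own proof. The paper likewise uses infinite genus to place $\alpha$ and $\beta$ in a compact subsurface and pick a nonseparating $\gamma$ disjoint from both in the complement, giving connectivity and diameter $2$ for $\gra{S}$ and $\mathcal{N}^{1}(S)$; and for $\gs{S}$ it interpolates exactly as you do, asserting the existence of curves $\delta_{1},\delta_{2}$ with $i(\alpha,\delta_{1})=i(\delta_{1},\gamma)=i(\gamma,\delta_{2})=i(\delta_{2},\beta)=1$ to obtain a length-$4$ path. Your cut-along-$\alpha$ construction of a curve meeting two disjoint nonseparating curves once each is more detailed than the paper, which simply asserts such curves exist, so on this point your write-up is if anything more complete.

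Where you diverge is the ``hard part'' you flag at the end: it is not in the paper at all. The proof in the paper only establishes $\mathrm{diam}(\gs{S})\leq 4$, and the remark immediately following the proposition states explicitly that $4$ as diameter for $\gs{S}$ is \emph{not} necessarily optimal (whereas $2$ is optimal for the two $1$-skeleta). So the statement is to be read as an upper bound for the Schmutz graph; your outlined program for exhibiting a distance-$4$ pair, with its intersection-number estimates, is not required, and leaving it unfinished is not a gap relative to what the paper actually proves.
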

\begin{proof}
Given any two distinct curves $\alpha$ and $\beta$ (either in $\mathcal{V}(\comp{S})$ or in $\mathcal{V}(\nons{S})$), we can always find a compact (finite genus) subsurface $S^{\prime}$ such that contains $\alpha$ and $\beta$. Hence we can take an essential nonseparating curve $\gamma$ on $S$ contained in $S \backslash S^{\prime}$ and not isotopic to $\alpha$ and $\beta$. Therefore $\gra{S}$ and $\mathcal{N}^{1}(S)$ are connected, $\mathrm{diam}(\gra{S}) = \mathrm{diam}(\mathcal{N}^{1}(S)) = 2$.\\
\indent If $\alpha$ and $\beta$ are two distinct nonseparating curves, as in the paragraph above, we can always find a curve $\gamma$ such that $i(\alpha,\gamma) = i(\gamma,\beta) = 0$; then we can always find curves $\delta_{1}$ and $\delta_{2}$ such that $i(\alpha,\delta_{1}) = i(\delta_{1},\gamma) = i(\gamma,\delta_{2}) = i(\delta_{2},\beta)=1$. Hence $\gs{S}$ is connected, $\mathrm{diam}(\gs{S}) \leq 4$.
\end{proof}

\begin{remark}
Number 2 as diameter for $\gra{S}$ and $\mathcal{N}^{1}(S)$ is optimal, but 4 as diameter for $\gs{S}$ is not necessarily optimal. 
\end{remark}

\subsection{Mapping Class Group}
\indent Through this article, we will be working with the mapping class group of a surface $S$. When $S$ is compact, this group has different (equivalent) definitions, see for example \cite{Farb}, \S 2.1. In this paper we will be working with the following definition.
\begin{Def}[\sc Mapping Class Group]
 Let $S$ be a surface. Then $\mathrm{Homeo}^{+}(S,\partial S)$ is the group of orientation-preserving homeomorphisms of $S$ that restrict to the identity on the boundary, and $\mathrm{Homeo}(S)$ is the group of \emph{all} homeomorphisms of $S$. The mapping class group of $S$, $\mathrm{MCG}(S)$ is the group $\mathrm{Homeo}^{+}(S) / \thicksim$, where $\thicksim$ represents the isotopy relation relative to the boundary. The extended mapping class group of $S$ is the group $\MCG{S} \coloneqq \mathrm{Homeo}(S) / \thicksim$, where $\thicksim$ represents the isotopy relation.
\end{Def}
The group $\MCG{S}$  is incredibly big. As evidence for this we have the following lemma and corollaries.
\begin{Lema}
 Let $S$ be an infinite genus surface and $F$ a subsurface of $S$ such that $S \backslash F$ has genus at least $1$ and the boundary components of $F$ are either boundary components of $S$ or essential curves of $S$. Then there exists a subgroup of $\MCG{S}$ isomorphic to $\mathrm{MCG}(F)$, with infinite index in $\MCG{S}$.
\end{Lema}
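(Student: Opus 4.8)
The plan is to realise $\mathrm{MCG}(F)$ inside $\MCG{S}$ by \emph{extension by the identity}. Given $h\in\mathrm{Homeo}^{+}(F,\partial F)$, define $\hat h\colon S\to S$ by $\hat h|_{F}=h$ and $\hat h|_{S\setminus F}=\mathrm{id}$; since $h$ fixes $\partial F$ pointwise, the two pieces agree on $\partial F$ and $\hat h$ is a well-defined orientation-preserving homeomorphism of $S$. Setting $\Phi([h])=[\hat h]$ yields a map $\Phi\colon\mathrm{MCG}(F)\to\MCG{S}$. It is a homomorphism because $\widehat{h_{1}\circ h_{2}}=\hat h_{1}\circ\hat h_{2}$, and it is well defined on isotopy classes: an isotopy of $h$ to the identity rel $\partial F$ extends, again by the identity on $S\setminus F$, to an isotopy of $\hat h$ to $\mathrm{id}_{S}$. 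The content of the lemma is then that $\Phi$ is injective with infinite-index image.

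For injectivity, suppose $[\hat h]=[\mathrm{id}_{S}]$, i.e.\ $\hat h$ is isotopic to the identity in $S$. Then $\hat h$ fixes the isotopy class of every essential curve of $S$; in particular, for every essential curve $c\subset F$ we have $h(c)=\hat h(c)\simeq c$ in $S$. Because every boundary component of $F$ is either a boundary component of $S$ or an essential curve of $S$, the subsurface $F$ is incompressible in $S$, so two essential curves of $F$ that are isotopic in $S$ are already isotopic in $F$; hence $h$ fixes the isotopy class in $F$ of every essential curve of $F$. Choosing a filling collection of such curves and applying the Alexander method in $F$ (in its classical form when $F$ has finite type, and in the infinite-type variant announced in the introduction otherwise), we conclude that $h$ is isotopic rel $\partial F$ to a product $\prod_{\delta\subset\partial F}T_{\delta}^{\,k_{\delta}}$ of Dehn twists about the boundary curves of $F$. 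Applying $\Phi$ and using $[\hat h]=[\mathrm{id}_{S}]$ gives $\prod_{\delta}T_{\delta}^{\,k_{\delta}}\simeq\mathrm{id}_{S}$, and since the boundary curves of $F$ that are essential in $S$ have nontrivial, independent Dehn twists in $\MCG{S}$, this forces $k_{\delta}=0$ for each such $\delta$, whence $[h]=\mathrm{id}$ in $\mathrm{MCG}(F)$.

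I expect this last step to be the main obstacle. In the \emph{extended}, free-boundary group $\MCG{S}$ a Dehn twist about a curve parallel to $\partial S$ is trivial, and isotopic curves carry equal twists, so one must genuinely control how the twist subgroup associated to $\partial F$ sits in $\MCG{S}$. The heart of the argument is therefore a Paris--Rolfsen type statement that this twist subgroup embeds; this is exactly where the hypotheses on $\partial F$ are consumed, and where one must rule out that distinct boundary curves of $F$ become isotopic, or boundary-parallel, in $S$.

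Finally, for the infinite index I would exploit the hypothesis that $S\setminus F$ has genus at least $1$, which provides a pair of curves $\gamma,\beta\subset S\setminus F$ with $i(\gamma,\beta)=1$. Every element of $\Phi(\mathrm{MCG}(F))$ is represented by a homeomorphism equal to the identity on $S\setminus F$, hence fixes $[\beta]$; on the other hand $i(T_{\gamma}^{k}\beta,\beta)=|k|$, so $T_{\gamma}^{k}$ does not fix $[\beta]$ for $k\neq 0$. Consequently $T_{\gamma}^{k}\notin\Phi(\mathrm{MCG}(F))$ for $k\neq 0$, the cosets $T_{\gamma}^{n}\,\Phi(\mathrm{MCG}(F))$ with $n\in\Z$ are pairwise distinct, and $\Phi(\mathrm{MCG}(F))$ has infinite index in $\MCG{S}$.
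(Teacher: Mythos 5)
Your overall strategy---realizing $\mathrm{MCG}(F)$ inside $\MCG{S}$ by extension by the identity, and detecting infinite index by cosets of a twist supported in $S\setminus F$---is exactly the approach of the paper, and your infinite-index argument is in fact more careful than the paper's one-line count: exhibiting the pairwise distinct cosets $T_{\gamma}^{n}\,\Phi(\mathrm{MCG}(F))$, separated by the action on a curve $\beta\subset S\setminus F$ with $i(\gamma,\beta)=1$, is a complete proof of that half. The problem is the injectivity of $\Phi$, which you flag as ``the main obstacle'' and defer to a Paris--Rolfsen type statement. That is a genuine gap, and it is not merely a missing citation: the statement you would need is \emph{false} under the hypotheses of the lemma as written.

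Concretely, the hypotheses allow a boundary component $\delta$ of $F$ to be a boundary component of $S$. In $\mathrm{MCG}(F)$ (isotopy rel $\partial F$) the twist $T_{\delta}$ is a nontrivial central element, but in $\MCG{S}$ the isotopies are free, and a twist about a curve parallel to $\partial S$ is undone by an isotopy that rotates that boundary component; hence $\Phi(T_{\delta})=1$ and $\Phi$ is not injective whenever $\partial F\cap\partial S\neq\emptyset$. Its image is then a proper quotient of $\mathrm{MCG}(F)$, and in general not abstractly isomorphic to it: for $F$ a one-holed torus with $\partial F\subset\partial S$, $\mathrm{MCG}(F)\cong B_{3}$ is torsion-free, while the image of $\Phi$ is a nontrivial quotient of $B_{3}/\langle T_{\delta}\rangle\cong\mathrm{PSL}(2,\mathbb{Z})$ and therefore has torsion. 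The same phenomenon occurs if two components $\alpha,\beta$ of $\partial F$ cobound an annulus component of $S\setminus F$ (not excluded by ``$S\setminus F$ has genus at least $1$''): then $T_{\alpha}T_{\beta}^{-1}\in\ker\Phi$. So your Alexander-method reduction correctly forces $k_{\delta}=0$ only for those $\delta$ that are essential in $S$ and pairwise non-isotopic; the remaining exponents genuinely survive, exactly as you feared. To make the argument work one must either strengthen the hypotheses (every component of $\partial F$ essential in $S$, pairwise non-isotopic, no annular components of $S\setminus F$), or use the infinite genus of $S$ to replace $F$ by an abstractly homeomorphic subsurface re-embedded so that these conditions hold, and only then run your injectivity argument. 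For what it is worth, the paper's own proof is silent on precisely this point---it simply asserts that the classes supported on $F$ form a subgroup isomorphic to $\mathrm{MCG}(F)$---so your proposal isolates the real difficulty more honestly than the text does; but as a proof it is incomplete, and the step you left open is the one that can actually fail.
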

\begin{proof}
 The subgroup of $\MCG{S}$ formed by those orientation-preserving elements $[h] \in \MCG{S}$ that have a representative $h$ with support on $F$, is isomorphic to $\mathrm{MCG}(F)$. This subgroup will have index greater or equal to the number of different elements in $\MCG{S}$ that have its support in the interior of the complement of $F$, thus it will have infinite index.
\end{proof}
\begin{Cor}
 Let $S$ be an infinite genus surface and $S_{g,n}$ be a compact surface of genus $g$ and $n$ boundary components. Then $\MCG{S_{g,n}} < \MCG{S}$.
\end{Cor}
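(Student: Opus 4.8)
The orientation-preserving content of this corollary is already contained in the previous lemma, so the plan is to realize $S_{g,n}$ as a subsurface of $S$ and then separately account for the orientation-reversing mapping classes. I assume $n\geq 1$, so that $S_{g,n}$ has nonempty boundary and can genuinely occur as a subsurface. First I would embed $F\cong S_{g,n}$ as a compact subsurface of $S$ all of whose boundary components are essential curves of $S$ and whose complement $S\setminus F$ has infinite (hence $\geq 1$) genus. This is possible precisely because $S$ has infinite genus while $F$ has finite type: a copy of $S_{g,n}$ with essential boundary can be carved out of a compact exhausting subsurface of the appropriate genus while leaving infinitely many handles outside. Extending orientation-preserving homeomorphisms of $F$ (fixing $\partial F$) by the identity on $S\setminus F$ then yields, by the previous lemma, an injective homomorphism $\mathrm{MCG}(F)\hookrightarrow\MCG{S}$ whose image is the subgroup of classes supported on $F$.

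To capture the full extended group I would use the decomposition $\MCG{S_{g,n}}\cong\mathrm{MCG}(S_{g,n})\rtimes\langle r\rangle$, where $r$ is a fixed reflection (an orientation-reversing involution) of $S_{g,n}$. The essential new ingredient is a single orientation-reversing involution $\rho$ of $S$ with $\rho(F)=F$ and $\rho|_{F}=r$; note that extending $r$ by the identity is impossible, since a homeomorphism of the connected surface $S$ cannot reverse orientation on $F$ and preserve it on $S\setminus F$. I would produce $\rho$ by realizing $S$ through a reflection symmetry: using the classification Theorem~\ref{T:NCSclass}, build a model of $S$ invariant under an orientation-reversing involution $\rho$ — legitimate because reflecting changes neither the genus, nor the number of boundary components, nor the homeomorphism type of the nested pair $\Endsg{S}\subset\Ends{S}$ — and place the copy of $F$ in $\rho$-invariant position, with $\rho$ restricting to $r$. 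Since $S$ has infinite genus there are infinitely many $\rho$-symmetric handles available, so a $\rho$-invariant $F$ of genus $g$ with $n$ boundary components exists.

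With $\rho$ at hand I would define $\Phi\colon\MCG{S_{g,n}}\to\MCG{S}$ by sending each orientation-preserving class to its extension by the identity and sending $r$ to $[\rho]$. Because $\rho(F)=F$, $\rho|_{F}=r$, and $\rho$ preserves $S\setminus F$ (where every extended class is the identity), conjugation by $[\rho]$ carries the extension of $\phi$ to the extension of $r\phi r^{-1}$, matching the semidirect-product structure, while $\rho^{2}=\mathrm{id}_{S}$ supplies the order-two relation. Injectivity on the orientation-preserving subgroup is exactly the previous lemma, and the nontrivial coset maps into the orientation-reversing part of $\MCG{S}$, which is disjoint from the image of the orientation-preserving subgroup; hence $\Phi$ is injective and $\MCG{S_{g,n}}<\MCG{S}$.

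The step I expect to be the main obstacle is the second one: producing, for an \emph{arbitrary} infinite genus surface $S$, an orientation-reversing involution $\rho$ together with a $\rho$-invariant copy of $S_{g,n}$ on which $\rho$ acts as a standard reflection. One must verify both that $S$ admits a reflection-symmetric model with the prescribed boundary and ends and that the invariant subsurface can be taken of exactly the topological type $S_{g,n}$ with essential boundary. A secondary but necessary point is checking that $\Phi$ is well defined at the level of isotopy classes, in particular that $[\rho]^{2}$ is genuinely trivial in $\MCG{S}$; one should also be careful about the boundary convention, since the copy of $\mathrm{MCG}(S_{g,n})$ produced by the lemma is the one in which the Dehn twists about the curves $\partial F$ survive as nontrivial elements (they are essential curves of $S$), which is the version of the finite-type mapping class group relevant here.
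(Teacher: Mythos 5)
The paper offers no separate argument for this corollary: it is meant to follow at once from the preceding lemma by embedding $F\cong S_{g,n}$ (so implicitly $n\geq 1$) in $S$ with essential boundary curves and a complement of genus at least one, the subgroup of classes supported on $F$ being isomorphic to $\mathrm{MCG}(F)$. Your first and third steps reproduce exactly this. Where you depart from the paper is in trying to account honestly for orientation-reversing classes, which the paper never addresses (its lemma produces only the orientation-preserving, boundary-fixing group $\mathrm{MCG}(S_{g,n})$, even though the corollary asserts the extended group $\MCG{S_{g,n}}$). Your reflection construction is sound for what it does: a mirror-symmetric model of $S$ exists by Theorem \ref{T:NCSclass}, the involution $\rho$ can be arranged to preserve $F$ and restrict to $r$, and conjugation compatibility together with the orientation argument does yield an injective homomorphism of $\mathrm{MCG}(S_{g,n})\rtimes\langle r\rangle$ into $\MCG{S}$.

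The genuine gap is your opening group-theoretic claim: with the paper's definitions, $\MCG{S_{g,n}}$ is \emph{not} isomorphic to $\mathrm{MCG}(S_{g,n})\rtimes\langle r\rangle$. The extended group is all homeomorphisms modulo \emph{free} isotopy, whereas $\mathrm{MCG}(S_{g,n})$ consists of boundary-fixing homeomorphisms modulo isotopy \emph{rel} boundary. Under free isotopy a Dehn twist about a boundary-parallel curve is trivial (one unwinds it by rotating the boundary circle), and for $n\geq 2$ there are additional classes permuting the boundary components; so the natural map $\mathrm{MCG}(S_{g,n})\to\MCG{S_{g,n}}$ has the boundary twists in its kernel and misses the boundary-permuting classes. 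Concretely, for the annulus $\MCG{S_{0,2}}\cong\Z/2\times\Z/2$, while $\mathrm{MCG}(S_{0,2})\rtimes\Z/2$ is infinite dihedral. The same mismatch makes your $\Phi$ ill-defined as a map on $\MCG{S_{g,n}}$: the identity and the twist about a component of $\partial F$ are freely isotopic in $F$, yet their extensions by the identity are distinct in $\MCG{S}$, precisely because $\partial F$ consists of essential curves of $S$ --- a tension you flag in your closing remark without resolving it (you cannot simultaneously have the boundary twists survive, as the lemma requires, and have them represent classes of $\MCG{S_{g,n}}$, where they die). So what your argument actually proves is $\mathrm{MCG}(S_{g,n})\rtimes\Z/2<\MCG{S}$; that is strictly more than the paper's own derivation literally yields, but it is not the stated corollary. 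To get the statement as written one would still have to handle the boundary-permuting classes and the collapse of the boundary twists (for instance by choosing the complement $S\setminus F$ symmetric enough to extend homeomorphisms that move $\partial F$), or else read the corollary, as the authors implicitly do, as the assertion $\mathrm{MCG}(S_{g,n})<\MCG{S}$ that the lemma delivers.
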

\begin{Cor}
  Let $S$ be an infinite genus surface, $\{(g_{i},n_{i})\}_{i \in \mathbb{N}} \subset (\mathbb{N} \times \mathbb{Z}^{+}) \backslash \{(0,1)\}$ be a sequence and $S_{i}$ be a compact orientable surface of genus $g_{i}$ and $n_{i}$ boundary components. Then $\MCG{S}$ contains a subgroup isomorphic to $\prod_{i \in \mathbb{N}}\MCG{S_{i}}$.
\end{Cor}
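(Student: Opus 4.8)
The plan is to realize the product by supporting each factor on its own compact piece of $S$ and then invoking the preceding lemma on each piece. First I would use that $g(S)=\infty$ to build a pairwise disjoint, locally finite family of compact subsurfaces $F_{1},F_{2},\ldots\subset S$, with disjoint closures, such that $F_{i}$ is homeomorphic to $S_{i}$ and every boundary curve of every $F_{i}$ is essential in $S$. Concretely, one constructs the $F_{i}$ one at a time: having placed $F_{1},\ldots,F_{k}$, the complement $S\setminus(F_{1}\cup\cdots\cup F_{k})$ still has infinite genus, so it contains a compact subsurface homeomorphic to $S_{k+1}$ with essential boundary. Here the hypothesis $(g_{i},n_{i})\neq(0,1)$ is exactly what guarantees that $S_{i}$ is not a disk and hence admits such an essential embedding (when $g_{i}=0$, $n_{i}=2$ one takes an annulus with nonseparating core). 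Choosing the pieces inside the complements of an exhaustion of $S$ by compact sets makes the family locally finite.

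Next I would define the homomorphism. For $(\,[h_{i}]\,)_{i}\in\prod_{i}\MCG{S_{i}}$, pick representatives $h_{i}$ fixing $\partial F_{i}$ pointwise and let $H$ be the homeomorphism of $S$ that equals $h_{i}$ on each $F_{i}$ and the identity on $S\setminus\bigcup_{i}F_{i}$. Local finiteness of $\{F_{i}\}$ guarantees that $H$ is continuous, hence a homeomorphism, and that its class is independent of the choices; set $\Phi((\,[h_{i}]\,)_{i}):=[H]$. Since the supports $F_{i}$ are pairwise disjoint, composing two such homeomorphisms amounts to composing coordinate-wise on each $F_{i}$, so $\Phi$ is a group homomorphism into $\MCG{S}$; restricted to a single coordinate it is precisely the embedding of $\MCG{S_{i}}$ into $\MCG{S}$ by extension by the identity, whose injectivity is the content of the preceding lemma (each $S\setminus F_{i}$ has infinite genus, so its hypotheses hold).

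The \textbf{main obstacle} is injectivity of $\Phi$, and this is where I would spend the effort. Suppose $\Phi((\,[h_{i}]\,)_{i})=1$ while some $[h_{i_{0}}]\neq 1$. By the Alexander method applied to the \emph{compact} surface $F_{i_{0}}$, there is an essential curve or arc in $F_{i_{0}}$ whose isotopy class $h_{i_{0}}$ fails to preserve. If it is a curve $c$ in the interior of $F_{i_{0}}$, then $c$ is disjoint from every $F_{j}$ with $j\neq i_{0}$, so $H(c)=h_{i_{0}}(c)$; since $F_{i_{0}}$ has essential boundary, it is $\pi_{1}$-injective, so curves isotopic in $S$ are isotopic in $F_{i_{0}}$, whence $h_{i_{0}}(c)\not\simeq c$ in $S$, contradicting $[H]=1$. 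If instead $h_{i_{0}}$ only moves an arc (the boundary-twist case, e.g.\ an annular $F_{i_{0}}$), I would close that arc up to an essential simple closed curve $\gamma$ of $S$ by joining its endpoints through $S\setminus\bigcup_{j}F_{j}$, routed so as to avoid every $F_{j}$ with $j\neq i_{0}$; then again $H(\gamma)=h_{i_{0}}(\gamma)\not\simeq\gamma$ in $S$, a contradiction. Hence $\Phi$ is injective and its image is the desired subgroup. The delicate points to verify are the simultaneous essential embedding with local finiteness in the first step and, above all, the arc-closing detection in the injectivity step, which is the genuine content behind the preceding lemma.
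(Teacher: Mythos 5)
Your overall architecture --- pairwise disjoint, locally finite supports $F_{i}\cong S_{i}$ with essential boundary, simultaneous extension by the identity, injectivity via the Alexander method --- is exactly the construction the paper intends (the paper gives no explicit proof of this corollary; it is meant to follow from the preceding lemma's ``support on $F$'' construction), and you correctly isolate the key point that local finiteness is what yields the full direct product rather than merely the direct sum. However, two steps fail as written. First, the statement concerns the \emph{extended} groups $\MCG{S_{i}}=\mathrm{Homeo}(S_{i})/\!\sim$, where $\sim$ is free isotopy, and your map $\Phi$ is neither defined on all of $\MCG{S_{i}}$ nor well defined where it is. Orientation-reversing classes and classes permuting boundary components admit no representative fixing $\partial F_{i}$ pointwise (a homeomorphism that is the identity on a boundary circle and preserves the adjacent side preserves local orientation there); already for the annulus, $\MCG{S_{i}}\cong\Z/2\times\Z/2$ and three of its four classes are of this kind. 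Worse, even on classes admitting such representatives, ``its class is independent of the choices'' is false: a Dehn twist along a curve parallel to $\partial F_{i}$ is \emph{freely} isotopic to the identity in $F_{i}$, hence trivial in $\MCG{S_{i}}$, yet its extension by the identity is the twist about an essential curve of $S$, nontrivial in $\MCG{S}$. What your construction actually produces is an embedding of $\prod_{i}\mathrm{MCG}(S_{i})$ (mapping class groups rel boundary, orientation-preserving). Admittedly this mismatch is inherited from the paper, whose lemma also only yields $\mathrm{MCG}(F)$; but note that no disjoint-support argument can ever prove the starred statement, since a homeomorphism of the connected surface $S$ reverses orientation either everywhere or nowhere, so it cannot restrict to a tuple mixing orientation-preserving and orientation-reversing factors.

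Second, the injectivity step rests on a false implication: $\pi_{1}$-injectivity of $F_{i_{0}}$ does \emph{not} imply that curves non-isotopic in $F_{i_{0}}$ remain non-isotopic in $S$. Concretely, let $F_{i_{0}}$ be a pair of pants obtained from a one-holed torus $T\subset S$ by deleting an open annular neighborhood of a nonseparating curve $c$: all three boundary curves of $F_{i_{0}}$ are essential in $S$ and $\pi_{1}$-injectivity holds, yet the two boundary curves $a,b$ parallel to $c$ are isotopic in $S$. For such an embedding, $T_{a}T_{b}^{-1}$ is a nontrivial element of $\mathrm{MCG}(F_{i_{0}})\cong\Z^{3}$ whose extension by the identity \emph{is} isotopic to the identity of $S$ (twists about isotopic curves are isotopic), so $\Phi$ has nontrivial kernel; no arc-closing trick can repair this, because the extended homeomorphism genuinely preserves every isotopy class in $S$. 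The gap is thus in the construction, not merely in its justification: you must additionally require that no two boundary components of any $F_{i}$ cobound an annulus in $S\setminus \mathrm{int}(F_{i})$ (equivalently, are isotopic in $S$). This is easy to arrange in your inductive construction, and with it the kernel of the inclusion homomorphism $\mathrm{MCG}(F_{i_{0}})\to\MCG{S}$ is indeed trivial (this is the standard computation of kernels of inclusion homomorphisms, see \cite{Farb}), after which your detection argument via curves and closed-up arcs goes through.
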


\section{Ends of adjacency graphs and surfaces}
	\label{S:adjg}
In this section we prove that, under the hypotheses $\Ends{S} = \Endsg{S}$, one can determine topologically $\Ends{S}$ using the adjacency graph of a pants decomposition of $S$.
\begin{Def}[\sc Pants decomposition and the adjacency graph]
 A \emph{pants decomposition} is a multicurve $P$ of maximal cardinality. We say $\alpha,\beta \in P$ are adjacent with respect to $P$ if they bound the same pair of pants in $S_{P}$. The adjacency graph of $P$, $\adj{P}$, is the simplicial graph whose vertex set is $P$ and two vertices span an edge if and only if they are adjacent with respect to $P$. We say two nonseparating curves form a \emph{peripheral pair} if they bound, along with a boundary component of $S$, a pair of pants.
\end{Def}
\indent If $P$ is a pants decomposition,  $S_{P}$ is the disjoint union of surfaces homeomorphic to a pair of pants, for otherwise we contradict maximality. As an abstract graph, $\adj{P}$ is a subgraph of $\gra{S}$, but we have to keep in mind that
adjacency of vertices in $\adj{P}$ and $\gra{S}$  means different things for the corresponding curves in $S$.

\begin{remark}\label{Cutpointnonouter}
 It can be easily checked that the only cut points of an adjacency graph $\adj{P}$ are non-outer separating curves, and non-outer separating curves are always cut points of any adjacency graph in which they are vertices. Also, we can easily check outer separating curves always have degree less or equal to two.
\end{remark}
\begin{Teo}
	\label{GraphEndsSurfaceEnds}
Let $S$ be an infinite genus surface such that $\Ends{S} = \Endsg{S}$ and $P$ be a pants decomposition of $S$. Then $\Ends{\adj{P}}$ is homeomorphic to $\Ends{S}$.
\end{Teo}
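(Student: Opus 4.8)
The plan is to realize both $\Ends{S}$ and $\Ends{\adj{P}}$ as inverse limits over a common exhaustion and to match them component by component. First I would record two structural facts about $\adj{P}$. It is \emph{locally finite}: a curve $\alpha\in P$ lies on the boundary of at most two pairs of pants of $S_{P}$, each contributing at most two further boundary curves, so $\deg_{\adj{P}}(\alpha)\le 4$; hence the ends of $\adj{P}$ are computed as $\varprojlim_{n}\pi_{0}^{\infty}(\adj{P}\setminus W_{n})$, the inverse limit of the sets of \emph{infinite} components of $\adj{P}\setminus W_{n}$, for any exhaustion of its vertex set by finite sets $W_{n}$. Moreover, $\adj{P}$ is naturally the line graph of the \emph{dual graph} $T$ of $P$, whose vertices are the pairs of pants of $S_{P}$ and whose edges are the curves of $P$, two curves being adjacent in $\adj{P}$ exactly when the corresponding edges of $T$ share an endpoint. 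This line-graph viewpoint is what will furnish the connectivity statements below.

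Next I would build the exhaustion on the surface side. Because $S$ is $\sigma$-compact and $P$ is locally finite, any compact exhaustion $L_{1}\subseteq L_{2}\subseteq\cdots$ of $S$ refines to an exhaustion $K_{1}\subseteq K_{2}\subseteq\cdots$ by compact subsurfaces, where $K_{n}$ is taken to be the union of all pairs of pants of $S_{P}$ that meet $L_{n}$. Thus each $K_{n}$ is a finite union of pants with $\partial K_{n}\subseteq P$, and, after absorbing any compact complementary pieces, every component of $S\setminus K_{n}$ is noncompact. Letting $W_{n}\subseteq P$ be the finite set of curves contained in $K_{n}$, the sequence $W_{1}\subseteq W_{2}\subseteq\cdots$ exhausts the vertex set of $\adj{P}$.

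The heart of the argument is a bijection, natural in $n$, between the components of $S\setminus K_{n}$ and the infinite components of $\adj{P}\setminus W_{n}$. Given a component $R$ of $S\setminus K_{n}$, let $T_{R}$ be the dual graph of the pants contained in $R$; since $R$ is connected, $T_{R}$ is connected, and the vertices of $\adj{P}\setminus W_{n}$ lying in $R$ are precisely the edges of $T_{R}$, i.e. they span the line graph $L(T_{R})$, which is therefore connected. No edge of $\adj{P}\setminus W_{n}$ joins two distinct regions, because an edge comes from two curves sharing a pant and each pant lies in a single region; so these subgraphs are exactly the components of $\adj{P}\setminus W_{n}$. \textbf{This is where the hypothesis $\Ends{S}=\Endsg{S}$ enters, and it is the main obstacle:} I must guarantee that each noncompact $R$ contributes an \emph{infinite} component, and that conversely no end is spuriously created or lost. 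Since every end of $S$ carries genus, every noncompact $R$ contains infinitely many handles, hence infinitely many curves of $P$, so $L(T_{R})$ is infinite; conversely every infinite component of $\adj{P}\setminus W_{n}$ consists of curves assembling, through their shared pants, into a single noncompact region. Without the hypothesis this genuinely fails: an annular (e.g. once-punctured) planar end is capped off by finitely many pants and contributes no curves of $P$, hence no component of $\adj{P}\setminus W_{n}$, so such an end of $S$ is invisible to $\adj{P}$.

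Finally I would assemble the conclusion. The bijections above are compatible with the bonding maps of the two inverse systems induced by $K_{n}\subseteq K_{n+1}$ and $W_{n}\subseteq W_{n+1}$, so they induce a bijection
\[
\Ends{S}=\varprojlim_{n}\pi_{0}^{\infty}(S\setminus K_{n})\;\longrightarrow\;\varprojlim_{n}\pi_{0}^{\infty}(\adj{P}\setminus W_{n})=\Ends{\adj{P}}.
\]
As both sides are inverse limits of finite discrete sets endowed with the limit topology, this bijection is automatically a homeomorphism, which completes the proof.
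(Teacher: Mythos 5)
Your proof is correct, and it takes a genuinely different route from the paper's. The paper constructs an explicit map $f_{*}:\Ends{\adj{P}}\to\Ends{S}$ out of a topological embedding $f:\adj{P}\hookrightarrow S$: a subgraph $\Gamma$ with compact boundary is sent to the subsurface $S(\Gamma)$ swept out by the pants meeting $f(\Gamma)$, nested sequences of subgraphs go to nested sequences of subsurfaces, and then injectivity (via the fact that distinct ends have disjoint representatives), surjectivity (via the pants-saturation $\mathfrak{S}_{i}$ of a nested sequence $S_{i}$, which is where the paper uses $\Ends{S}=\Endsg{S}$), and openness are checked by hand directly from Freudenthal's definition, with the upgrade to a homeomorphism coming from compactness of the two end spaces. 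You instead invoke the inverse-limit description $\Ends{X}\cong\varprojlim_{n}\pi_{0}^{\infty}$ of end spaces — which for the graph side requires the local finiteness you verify via the degree bound $\deg_{\adj{P}}(\alpha)\le 4$ — and reduce the theorem to a level-wise combinatorial statement: noncompact components of $S\setminus K_{n}$ biject with infinite components of $\adj{P}\setminus W_{n}$, via the observation that $\adj{P}$ is the line graph of the dual graph of the decomposition. What your approach buys is that continuity is automatic (a compatible system of bijections of finite sets induces a homeomorphism of the limits), so you dispense with the paper's openness-plus-compactness step; what it costs is the appeal to the inverse-limit characterization of ends, which the paper avoids by working straight from the definition. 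One caveat on where the hypothesis really enters: it is needed already to know that every complementary region of $P$ is a \emph{compact} pair of pants (with planar ends a maximal multicurve can leave, e.g., punctured-annulus regions, which is exactly the failure mode you describe at the end); your setup, like the paper's, tacitly relies on this earlier fact, and once $S$ is tiled by precompact pants your appeal to "noncompact $R$ has infinite genus, hence infinitely many curves" is correct but could be replaced by the simpler remark that a noncompact union of precompact pants must contain infinitely many of them.
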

\begin{proof} For every pants decomposition there is a natural, but not canonical, topological embedding:
\begin{equation}
	\label{E:emb}
f:\adj{P}\hookrightarrow S
\end{equation} 
This embedding is illustrated in figure \ref{F:ooo}. Let $\Gamma$ be a subgraph of $\adj{P}$ whose boundary $\partial\Gamma$ is compact. We define $S(\Gamma)$ as the subsurface of $S$ formed by all pants in $S$ (defined by the multicurve $P$) that intersect $f(\Gamma)$, \emph{deprived of its boundary}. By definition $S(\Gamma)$
is an open subsurface of $S$ whose boundary is formed by a finite collection of curves $\{C_{1},\ldots,C_{n}\}\subset P$. Remark that if the graph $\Gamma$ is connected, so is $S(\Gamma)$. Moreover if $\Gamma\supset\Gamma'$ are two connected subgraphs of $\adj{P}$ with compact boundaries we have that $S(\Gamma)\supset S(\Gamma')$. For every  $[\Gamma_{1}\supseteq \Gamma_{2}\supseteq\ldots]$ in $\Ends{\adj{P}}$ we define:
\begin{equation}
f_{*}[\Gamma_{1}\supseteq \Gamma_{2}\supseteq\ldots]=[S(\Gamma_{1})\supseteq S(\Gamma_{2})\supseteq\ldots]\in \Ends{S}
\end{equation}
\begin{figure}
\centering
\begin{tikzpicture}
\draw [very thin] (0,1) ellipse (0.1 and 1);
\draw [very thin] (0,2) .. controls (1,2) .. (3,3);
\draw [very thin] (0,0) .. controls (1,0) .. (3,-1);
\draw [very thin] (3,3) to[out=-10,in=110] (4,2) ;
\draw [dashed] (4,2) to[out=170,in=280] (3,3) ;
\draw [very thin, dashed] (3,-1) to[out=80,in=190] (4,0) ;
\draw [very thin] (3,-1) to[out=10,in=260] (4,0) ;
\draw (4,2) to[out=250,in=120] (4,0);
\draw [dashed] (4,2) to[out=170,in=280] (3,3) ;
\draw [fill=black] (0.1,1) circle (0.1); 
\draw [fill=black] (3.65,2.65) circle (0.1);
\draw [fill=black] (3.65,-.65) circle (0.1);
\draw [very thick] (0.1,1) to[in=180,out=0] (3.65,2.65);
\draw [very thick] (0.1,1) to[in=180,out=0] (3.65,-.65);
\draw [very thick] (3.65,-.65) to[in=200,out=150] (3.65,2.65);
\end{tikzpicture}
\caption{A natural embedding of $\adj{P}$ into $S$.} \label{F:ooo}
\end{figure}
It follows directly from definition \ref{D:ends} that $f_{*}$ is well defined. We claim that $f_{*}:\Ends{\adj{P}}\to\Ends{S}$ is an homeomorphism. The injectivity of $f_{*}$ follows from the following general lemma:
\begin{Lema}\cite{F}
Let $[U_{1}\supseteq U_{2}\supseteq\ldots]$ and $[U'_{1}\supseteq U'_{2}\supseteq\ldots]$ be two different points in $\Ends{X}$. Then there exists $i\in\N$ such that $U_{i}\cap U'_{i}=\emptyset$. 
\end{Lema}
Indeed, let us suppose that $[\Gamma_{1}\supseteq \Gamma_{2} \supseteq\ldots] \neq [\Gamma'_{1}\supseteq \Gamma'_{2} \supseteq \ldots]$. Then there exists an $i\in \N$ such that $\Gamma_{i}\cap\Gamma'_{i}=\emptyset$. Let us suppose that $[S(\Gamma_{1})\supseteq S(\Gamma_{2})\supseteq\ldots]=[S(\Gamma'_{1})\supseteq S(\Gamma'_{2})\supseteq\ldots]$. Hence, for the previous $i\in\N$ there exist $l,k\in\N$ such that $S(\Gamma_{i})\supseteq S(\Gamma'_{l})$ and $S(\Gamma'_{i})\supseteq S(\Gamma_{k})$. Without loss of generality suppose that $S(\Gamma'_{l})\supseteq S(\Gamma'_{i})$, hence $S(\Gamma_{i})\cap S(\Gamma'_{i})=S(\Gamma'_{i})$ and, since both $\partial\Gamma_{i}$ and $\partial\Gamma'_{i}$ have compact boundary we conclude that $\Gamma_{i}\cap\Gamma'_{i}\neq\emptyset$. This contradicts our initial assumption. The case where $S(\Gamma'_{i})\supseteq S(\Gamma'_{l})$ is analogous.\\ 
We address now surjectivity. Let $[S_{1}\supseteq S_{2}\supseteq\ldots]\in\Ends{S}$. Since there are no planar ends, that is $\Ends{S} = \Endsg{S}$, we can consider, for each $S_{i}$ the surface $\mathfrak{S}_{i}$ formed by all pants in the pants in the decomposition defined by $P$ that intersect $S_{i}$. Since $S_{i}$ is connected, then $\mathfrak{S}_{i}$ must be connected. Also, since $\partial S_{i}$ is compact, so is $\partial\mathfrak{S}_{i}$. Moreover, by definition, if $i\leq j$ then $\mathfrak{S}_{i}\supseteq\mathfrak{S}_{j}$. Hence we have a well defined end $[\mathfrak{S}_{1} \supseteq \mathfrak{S}_{2} \supseteq \ldots]\in\Ends{S}$. By construction, for every $i\in\N$ we have that $S_{i}\subset\mathfrak{S}_{i}$.  On the other hand, given $i\in\N$ we can find $S_{j}$ such that $S_{i}\setminus S_{j}$ contains (properly) a connected surface formed by pants in the pant decomposition defined by $P$. This implies that there exists $k\in\N$ such that $S_{j}\subset \mathfrak{S_{k}}$. Therefore $[\mathfrak{S}_{1} \supseteq \mathfrak{S}_{2} \supseteq \ldots]=[S_{1}\supseteq S_{2}\supseteq\ldots]$. Now define $\Gamma_{i}$ as the maximal subgraph of $S$ such that $f(\Gamma_{i})\subset\mathfrak{S}_{i}$. The graph $\Gamma_{i}$ has compact boundary for $\mathfrak{S}_{i}$ has compact boundary and by definition $f_{*}[\Gamma_{1}\supset\Gamma_{2}\supset\ldots]=[S_{1}\supseteq S_{2}\supseteq\ldots]$. This proves that $f_{*}$ is a bijection.\\
Now we prove that $f_{*}$ is an homeomorphism. Let $\Gamma$ be a subgraph of $\adj{P}$ with compact boundary as before. We define
\begin{equation}
\Gamma^{*} \coloneqq \{[\Gamma_{1} \supseteq \Gamma_{2} \supseteq \ldots]\hspace{1mm}|\hspace{1mm} \Gamma \supseteq \Gamma_{i}\hspace{1mm}\text{for i sufficiently big}\}
\end{equation}
The collection of all $\Gamma^{*}$'s generates the topology of $\Ends{\adj{P}}$. On the other hand we know from \cite{R} that the topology of $\Ends{S}$ is generated by 
\begin{equation}
U^{*}:=\{[\hat{S}_{1}\supseteq \hat{S}_{2}\supseteq\ldots]\hspace{1mm}|\hspace{1mm} U\supseteq \hat{S}_{i}\hspace{1mm}\text{for i sufficiently big}\},
\end{equation}
where $U\subset S$ is an open subset with compact boundary. Clearly $f_{*}\Gamma_{*}=S(\Gamma)^{*}$ , hence $f_{*}$ is open. From \cite{Ray} we know that both $\Ends{\adj{P}}$ and $\Ends{S}$ are compact Hausdorff topological spaces. Hence $f_{*}$ is an homeomorphism.
\end{proof}
\begin{remark}
	\label{Rmk:pun}
We can think of punctures on a surface as planar ends, and hence the preceding result is not true if we allow the surface $S$ to have them. 
\end{remark}

\section{Proof of main results.}
	\label{MainResults}
	\subsection{Injectivity.}
		\label	{S:INYECT}
\indent In this section we will prove the following result:
\begin{Teo}\label{injCS}
 Let $S$ be an infinite genus surface such that
$\Ends{S} = \Endsg{S}$. The natural map: 
\begin{equation}
\Psi_{\comp{S}}:{\rm MCG}^{*}(S)\to {\rm Aut}(\comp{S})
\end{equation}
is injective.
\end{Teo}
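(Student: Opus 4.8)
Since $\Psi_{\comp{S}}$ is a group homomorphism, injectivity amounts to showing its kernel is trivial. So I would start by fixing a homeomorphism $h$ representing a class $[h]$ with $\Psi_{\comp{S}}([h]) = \I$; by definition this means $h(\alpha)$ is isotopic to $\alpha$ for \emph{every} essential curve $\alpha$, and the goal is to prove $h$ is isotopic to the identity. The natural strategy is an Alexander-type argument: produce one configuration of curves rigid enough that any homeomorphism fixing each of its elements up to isotopy is forced to be isotopic to the identity, and then feed in all the hypotheses $h(\alpha)\simeq\alpha$ at once. This is precisely the ``variant of the Alexander method for infinite type surfaces'' announced in the introduction, so I expect the bulk of the work to be establishing and applying that tool.

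Concretely, the plan is to build a \emph{locally finite} collection $\Gamma=\{\gamma_{i}\}_{i\in\N}$ of essential curves in pairwise minimal position (with no triple points) that \emph{fills} $S$, in the sense that every complementary component is a disk, a boundary-parallel annulus, or a half-open annular neighbourhood of an end. To organize this in the infinite-genus setting I would start from a pants decomposition $P$ and exhaust $S$ by compact subsurfaces $S_{1}\subset S_{2}\subset\cdots$, using Theorem \ref{GraphEndsSurfaceEnds} and the adjacency graph $\adj{P}$ to keep track of how the ends are approached; on each compact piece one fills with finitely many curves as in the finite-type case, and local finiteness of $\Gamma$ lets these patch into a global filling system. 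Because $h$ fixes the isotopy class of every curve, it fixes each $[\gamma_{i}]$ and also the isotopy classes that record how the $\gamma_{i}$ cross one another, so after an isotopy $h$ can be taken to fix the one-complex $\bigcup_{i}\gamma_{i}$; it then permutes the complementary disks rigidly and must restrict to the identity there, giving $h\simeq\I$ on each compact piece. Orientation is handled automatically: at a transverse double point of two fixed curves $h$ cannot reverse the local orientation, so any orientation-reversing class is excluded from the kernel.

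The main obstacle, and the point where infinite type genuinely departs from the compact case, is to turn these compatible local isotopies into a single \emph{proper} isotopy of all of $S$ to the identity, controlling the behaviour near every end simultaneously. In the compact setting one immediately concatenates finitely many local isotopies; here one must verify that the isotopies built on the $S_{n}$ can be chosen to agree on overlaps and to converge uniformly on compact sets, so that the resulting ambient isotopy is well defined and proper. This is exactly where the hypothesis $\Ends{S}=\Endsg{S}$ (together with finitely many boundary components) is used, since it guarantees via Theorem \ref{GraphEndsSurfaceEnds} that the filling system can be chosen so that no nontrivial ``rotation at infinity'' survives, and hence that fixing all of $\Gamma$ pins $h$ down at every end. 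Establishing this infinite-type Alexander method rigorously is the crux; once it is in place, injectivity of $\Psi_{\comp{S}}$ follows immediately from $h(\gamma_{i})\simeq\gamma_{i}$ for all $i$.
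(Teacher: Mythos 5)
Your overall strategy coincides with the paper's: exhaust $S$ by compact subsurfaces, fill them with a locally finite system of curves, isotope $h$ to fix that system, and patch the resulting local isotopies into a global one. However, two of the concrete steps you rely on do not hold up as stated, and they are exactly where the content of the theorem lies. First, the orientation step: you claim that since $h$ fixes two transverse curves through a double point it ``cannot reverse the local orientation.'' This is false. A reflection can fix two transverse curves setwise while reversing orientation; on the torus, $(x,y)\mapsto(x,-y)$ fixes both standard generators up to isotopy, fixes their intersection point, and preserves each tangent direction there, yet is orientation-reversing. Detecting orientation requires a larger configuration; the paper instead uses that an orientation-reversing homeomorphism of a compact subsurface of genus at least $3$ with boundary cannot act trivially on the curve complex of that subsurface.

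Second, and more seriously, the patching step --- which you correctly single out as the crux --- is asserted rather than proved, and the mechanism you propose for it is the wrong one. Theorem \ref{GraphEndsSurfaceEnds} plays no role in the paper's injectivity argument, and no ``rotation at infinity'' or properness issue arises, since the conclusion is isotopy to the identity, not proper isotopy: once the local isotopies literally agree on each $K_{i}$, continuity of the limit is automatic. What actually makes the local isotopies compatible in the paper is a design condition imposed on the filling systems themselves: the curves added at stage $j$ are chosen disjoint from every curve of the earlier systems $\Gamma_{i}'$ (including the cut boundaries $\partial_{i}K_{i}$), so that Lemma \ref{homotopycurves} allows the stage-$j$ isotopy to be chosen to restrict, on $\Gamma_{i}'\times[0,1]$, to the stage-$i$ isotopy. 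One then argues region by region in $S\setminus\Gamma_{i}'$ (Alexander's lemma on the disks; on the complementary annuli, a nontrivial Dehn twist is excluded because $h$ fixes every curve class, and annuli meeting $\partial S$ are handled by free isotopy) to conclude that $h$ is isotopic to the identity on each $K_{i}$ compatibly --- this is the paper's Lemma \ref{L:HOM}. Without the disjointness design and this annulus analysis, your statement that the isotopies ``can be chosen to agree on overlaps'' is precisely the assertion that needs proof, so the outline leaves the essential step unestablished.
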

Most of the proof of this theorem will rely in the following lemma and a variant of the Alexander method (see \cite{Farb} for details on this method). 
\begin{Lema}
\label{homotopycurves}
 Let $S$ be an infinite genus surface possibly with marked points and possibly a finite number of boundary components. Let $\gamma_{1}, \ldots, \gamma_{n}$ be a collection of simple closed curves and simple proper arcs in $S$ such that satisfy the three following properties:
 \begin{enumerate}
  \item The $\gamma_{i}$ are in pairwise minimal position. That is, for $i\neq j$, the 
(geometric) intersection of $\gamma_{i}$ with $\gamma_{j}$ is minimal within their homotopy classes. 
\item The $\gamma_{i}$ are pairwise nonisotopic.
  \item For distinct $i,j,k$, at least one of $\gamma_{i} \cap \gamma_{j}$, $\gamma_{i} \cap \gamma_{k}$, or $\gamma_{j} \cap \gamma_{k}$ is empty.
 \end{enumerate}
 If $\gamma_{1}^{\prime}, \ldots, \gamma_{n}^{\prime}$ is another such collection so that $\gamma_{i}$ is isotopic to $\gamma_{i}^{\prime}$ for each $i$, then there is an isotopy of $S$  that takes $\gamma_{i}^{\prime}$ to $\gamma_{i}$ for all $i$ simultaneously, and hence takes $\cup \gamma_{i}$ to $\cup \gamma_{i}^{\prime}$.
\end{Lema}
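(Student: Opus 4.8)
The plan is to reduce the statement to the classical Alexander method on a \emph{compact} surface, which is available in \cite{Farb}. The key observation is that although $S$ is of infinite type, the data in the lemma is finite and compact: each $\gamma_i$ and each $\gamma_i'$ is either a simple closed curve or a proper arc with endpoints on $\partial S$ or at marked points, hence has compact image, and there are only finitely many of them. Moreover, for each $i$ we may fix an ambient isotopy $\{\phi^i_t\}_{t\in[0,1]}$ of $S$ with $\phi^i_0=\mathrm{Id}$ and $\phi^i_1(\gamma_i')=\gamma_i$ witnessing $\gamma_i\simeq\gamma_i'$; its \emph{trace} $\bigcup_{t\in[0,1]}\phi^i_t(\gamma_i')$ is the continuous image of the compact set $\gamma_i'\times[0,1]$, hence compact. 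Consequently the set
\[
K \;=\; \partial S \;\cup\; \Bigl(\bigcup_{i=1}^{n}\gamma_i\Bigr)\cup\Bigl(\bigcup_{i=1}^{n}\gamma_i'\Bigr)\cup\Bigl(\bigcup_{i=1}^{n}\bigcup_{t\in[0,1]}\phi^i_t(\gamma_i')\Bigr),
\]
together with the finitely many marked points involved, is compact.

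First I would choose a connected compact subsurface $\Sigma\subset S$ containing $K$ in its interior (possible since every surface is exhausted by compact subsurfaces), enlarged so that no $\gamma_i$ or $\gamma_i'$ bounds a disk in $\Sigma$ or is parallel into $\partial\Sigma$; this is available because $S$ has infinite genus and hence plenty of room beyond any compact set. By construction $\Sigma$ contains $\partial S$ and all relevant marked points, so each $\gamma_i,\gamma_i'$ is an essential simple closed curve or proper arc of $\Sigma$. Next I would verify that the two collections, viewed inside $\Sigma$, satisfy the hypotheses of the compact Alexander method. Minimal position is inherited because it is equivalent to the absence of bigons, and a bigon in $\Sigma$ is a bigon in $S$, so hypothesis (1) passes to $\Sigma$; hypothesis (3) is literally unchanged since it only concerns the sets $\gamma_i\cap\gamma_j$; and pairwise non-isotopy (2) is inherited because an isotopy in $\Sigma$ is a fortiori an isotopy in $S$. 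Finally, and this is the crucial point, the relation $\gamma_i\simeq\gamma_i'$ holds \emph{inside} $\Sigma$: the isotopy $\{\phi^i_t\}$ restricts to an isotopy of submanifolds of $\Sigma$ precisely because we arranged its trace to lie in $\Sigma$, and on a compact surface this upgrades to an ambient isotopy by the isotopy extension theorem.

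With all hypotheses verified, the compact version of this very statement (\cite{Farb}) produces an isotopy of $\Sigma$, fixing $\partial\Sigma$ pointwise, carrying $\gamma_i'$ to $\gamma_i$ for all $i$ simultaneously. Extending it by the identity on $S\setminus\Sigma$ yields the desired isotopy of $S$, completing the argument.

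The main obstacle is the correct choice of the compact piece $\Sigma$: one must confine not only the curves and arcs but also the isotopies realizing $\gamma_i\simeq\gamma_i'$ to a single compact subsurface, for otherwise the transfer of the isotopy hypothesis ``$\gamma_i\simeq_S\gamma_i'\Rightarrow\gamma_i\simeq_\Sigma\gamma_i'$'' — the one direction that is not automatic — may fail. Including the compact traces of the isotopies resolves this, after which the remaining conditions (minimal position, essentiality, pairwise non-isotopy) are inherited by purely local or elementary arguments. A secondary point needing care is arranging $\Sigma$ so that essentiality of the $\gamma_i$ is preserved and so that the resulting isotopy, being supported in the interior of $\Sigma$, extends by the identity to all of $S$.
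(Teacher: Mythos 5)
Your proposal is correct in its main structure, but it takes a genuinely different route from the paper. The paper's entire proof is the observation that the proof of Lemma 2.9 of \cite{Farb} goes through \emph{verbatim} on an infinite-genus surface: every step of that argument (bigon removal, innermost-disk arguments, ambient isotopy extension) is local and compactly supported near the curves, so compactness of the ambient surface is never used. You instead treat the compact lemma as a black box and reduce to it, by confining the finite configuration --- the curves, the arcs, and crucially the traces of ambient isotopies realizing $\gamma_{i}\simeq\gamma_{i}'$ --- inside a compact subsurface $\Sigma$, checking that the hypotheses transfer (bigon criterion for minimal position, non-isotopy, essentiality via a suitable choice of the frontier of $\Sigma$), and then pushing the resulting isotopy of $\Sigma$ back out to $S$. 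Your reduction is the more modular argument: it needs only the statement of the compact case, not its proof. What the paper's route buys is precisely the step where your write-up has its one real gap, namely the return from $\Sigma$ to $S$.

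That gap is the assertion that the compact Alexander lemma produces an isotopy of $\Sigma$ \emph{fixing $\partial\Sigma$ pointwise}. Lemma 2.9 of \cite{Farb} makes no such promise: it produces some ambient isotopy of $\Sigma$, which may move points of the frontier circles of $\Sigma$, and then ``extending by the identity on $S\setminus\Sigma$'' does not even define a map. The fix is standard but must be said. Let $H_{t}$ be the isotopy of $\Sigma$ given by the compact lemma and restrict it to the compact $1$-manifold $\bigcup\gamma_{i}'$. Since $H_{0}=\mathrm{Id}$ and the boundary components of $\Sigma$ are finite in number, each $H_{t}$ preserves every boundary component of $\Sigma$ setwise and preserves the interior; as the arc endpoints lie on $\partial S$ and not on the frontier circles, the trace of this submanifold isotopy is disjoint from the frontier of $\Sigma$ in $S$. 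Now apply the isotopy extension theorem once more, with support in a compact neighborhood of that trace avoiding the frontier: this yields an ambient isotopy of $S$ itself that agrees with $H_{t}$ on $\bigcup\gamma_{i}'$, hence carries each $\gamma_{i}'$ to $\gamma_{i}$, and is the identity outside a compact set. With this patch --- and with the remark that essentiality of the $\gamma_{i}$ in $S$, which is implicit in the paper's statement and is required by \cite{Farb}, is exactly what your enlargement of $\Sigma$ must ensure inside $\Sigma$ (it suffices to choose the frontier circles non-isotopic in $S$ to any of the finitely many $\gamma_{i},\gamma_{i}'$) --- your argument is complete.
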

A collection of curves $\gamma_{1}, \ldots, \gamma_{n}$ satisfying (1)-(3) in the preceding lemma will be called an \emph{Alexander system} in S. The proof of this lemma is exactly the same as the proof of lemma 2.9 in \cite{Farb}.\\[0.3cm]
\textbf{Proof theorem \ref{injCS}}. Let $\fun{h}{S}{S}$ be an homeomorphism such that $h(\alpha)$ is isotopic to $\alpha$ for all $\alpha \in \mathcal{V}(\comp{S})$. For every infinite genus surface such that
$\Ends{S} = \Endsg{S}$ we can find a family of compact subsurfaces  $\{K_{i}\}_{i \in \mathbf{N}}$ such that:
\begin{itemize}
\item $S=\bigcup_{i\in \mathbf{N}}K_{i}$,
\item $K_{i} \subset K_{j}$ if $i < j$.
\item $K_{i}$ has genus at least $3$ for all $i \in \mathbf{N}$.
\item $K_{j}\setminus K_{i}$ admits at least one curve nonisotopic to any boundary curve of $K_{j}$ for $i < j$. 
\item Every boundary component of $K_{i}$ that is not a boundary component of $S$ is an essential separating curve of $S$.
\end{itemize}
For each $i\in \mathbf{N}$ let us write $\partial K_{i}$ for the boundary of $K_{i}$, $\partial_{S}K_{i}$ for all curves in  $\partial K_{i}$ that are part of the boundary of $S$ and $\partial_{i}K_{i}$ for $\partial K_{i}\setminus\partial_{S}K_{i}$. Given such a family $\{K_{i}\}_{i \in \mathbb{N}}$ of compact subsurfaces we can find $\{\Gamma_{i}\}_{i\in \mathbb{N}}$ a collection of finite subsets of $\mathcal{V}(\comp{S})$ such that:
\begin{itemize}
 \item Every boundary component of $K_{i}$ that is not a boundary component of $S$, is in $\Gamma_{j}$ for $i < j$ and is disjoint from every other curve in $\cup_{i\in \mathbf{N}}\Gamma_{i}$. 
 \item $\Gamma_{0}$ fills $K_{0}$ and $\Gamma_{j}\setminus\Gamma_{j-1}$ fills $K_{j}\setminus K_{j-1}$ for all $j > 0$. In addition $\Gamma_{i} \subset \Gamma_{j}$ for $i < j$.
 \item If we cut $K_{j} \setminus K_{i}$ along $\Gamma_{j} \setminus \Gamma_{i}$ we obtain either discs or annuli with one boundary component in $\partial K_{k}$, for $i < j$ and some $k$ with $i \leq k \leq j$. 
\item For all $\gamma \in (\Gamma_{j} \backslash \Gamma_{i})$ and $\gamma^{\prime} \in \Gamma_{i}$, we have that $i(\gamma,\gamma^{\prime}) = 0$. Moreover, if we define for each $i\in \mathbf{N}$
\begin{equation}
\Gamma_{i}'= \Gamma_{i}\cup\partial_{i}K_{i}
\end{equation}
 then, for all $\gamma \in (\Gamma_{j} \backslash \Gamma_{i}')$ and $\gamma^{\prime} \in \Gamma_{i}'$ we have $i(\gamma,\gamma^{\prime}) = 0$.
 \item Both $\Gamma_{i}$ and $\Gamma_{i}'$ are Alexander systems in $S$.
\end{itemize}
Figure \ref{GammaiKi} shows an example of  $\{K_{i}\}_{i \in \mathbf{N}}$ and its corresponding $\{\Gamma_{j}\}_{j\in \mathbf{N}}$.

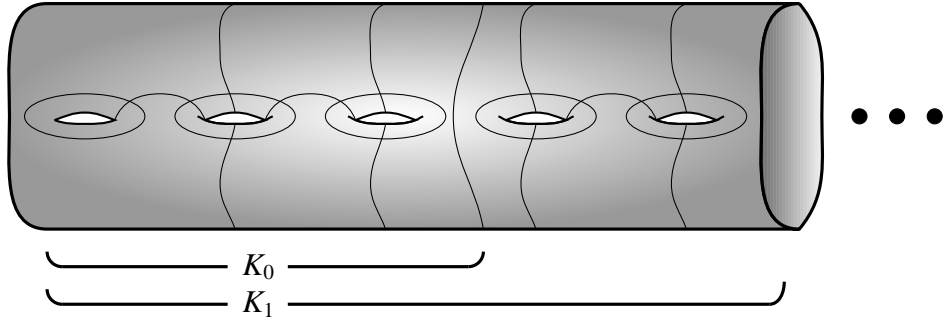
\begin{figure}
\centering
\begin{tikzpicture}
\shadedraw [inner color=white,outer color=white!60!black, very thick]  
(-4,0) to[out=180,in=270] (-4.5,1.5) to[out=90,in=180] (-4,3) to[out=0,in=180] (6,3) to[out=180, in=90] (5.5,1.5) to[out=270,in=180] (6,0) to[out=180,in=0] (-4,0) ;

\shadedraw [left color=white!35!black, right color=white, very thick] 
(6,3) to[out=180, in=90] (5.5,1.5) to[out=270,in=180] (6,0) to[out=50,in=270] (6.3,1.5) to[out=90,in=310] (6,3);


\foreach \x in {.5,1,1.5}
\filldraw (6.3+\x,1.5) circle (.1);


\foreach \x in {0,2,4,6,8}
\filldraw[fill=white, thick] (-3.9+\x,1.45) to[out=340,in=180] (-3.5+\x, 1.4) to [out=0,in=200] (-3.1+\x,1.45) to[out=160,in=0] (-3.5+\x,1.55) to[out=180,in=20] (-3.9+\x,1.45);
\foreach \x in {2,4,6,8}
\draw[thick] (-4+\x,1.5) to[out=340,in=160] (-3.9+\x,1.45) to[out=340,in=180] (-3.5+\x, 1.4) to [out=0,in=200] (-3.1+\x,1.45) to[out=20,in=200] (-3+\x,1.5);


\foreach \x in {0,2,4,6,8}
\draw (-3.5+\x,1.5) ellipse (.8
 and .3);
 \foreach \x in {2,4,6,8}
 \draw (-3.5+\x,1.55) to[out=100,in=270] (-3.7+\x,2.3) to[out=90,in=200] (-3.5+\x,3);

 
 \foreach \x in {0,2,6}
 \draw [thin] (-3.1+\x,1.45) to[out=70,in=180] (-2.5+\x,1.8) to[out=0,in=110](-1.9+\x,1.45);
 
 
 \foreach \x in {2,4,6,8}
 \draw (-3.5+\x,1.4) to[out=260,in=90] (-3.7+\x,.6) to[out=270,in=110] (-3.5+\x,0);
 
 \foreach \x in {2}
 \draw[thin] (-.2+\x,0) to[out=100,in=270] (-.6+\x,1.5) to[out=90,in=260] (-.2+\x,3);

\draw[very thick] (1.6,-.5) -- (-.8,-.5) node[anchor=east]{$K_{0}$};
\draw[very thick] (1.6,-.5) to[out=0,in=270] (1.8,-.3);
\draw[very thick] (-3.8,-.5) -- (-1.6,-.5);
\draw[very thick] (-3.8,-.5) to[out=180,in=270] (-4,-.3);

\foreach \x in {4}
\draw[very thick] (1.6+\x,-1) -- (-.8,-1) node[anchor=east]{$K_{1}$};
\foreach \x in {4}
\draw[very thick] (1.6+\x,-1) to[out=0,in=270] (1.8+\x,-.7);
\draw[very thick] (-3.8,-1) -- (-1.6,-1);
\draw[very thick] (-3.8,-1) to[out=180,in=270] (-4,-.8);

\end{tikzpicture}
\caption{Example for $K_{0}, K_{1}, \ldots$ and $\Gamma_{0}, \Gamma_{1},\ldots$.} \label{GammaiKi}
\end{figure}

\begin{Lema}
	\label{L:HOM}
There exist homotopies $\fun{H_{i}}{S \times [0,1]}{S}$ such that:
\begin{enumerate}
 \item $H_{i}|_{S \times \{0\}}$ is the identity for $i \in \mathbf{N}$.
 \item $H_{i}|_{K_{i} \times \{1\}} = h|_{K_{i}}$ for $i \in \mathbf{N}$.
 \item $H_{i}|_{K_{i} \times [0,1]} = H_{j}|_{K_{i} \times [0,1]}$ for $i < j$.
\end{enumerate}
\end{Lema}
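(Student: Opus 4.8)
The plan is to construct the homotopies $H_i$ by induction on $i$, at each stage invoking the variant of the Alexander method (Lemma \ref{homotopycurves}) for the nested filling Alexander systems $\Gamma_i'$. The basic observation is that since $h$ is a homeomorphism fixing the isotopy class of every curve, for each $i$ the family $h(\Gamma_i') = \{h(\gamma) : \gamma\in\Gamma_i'\}$ is again an Alexander system in $S$ (homeomorphisms preserve minimal position, non-isotopy, and the triple-intersection condition of Lemma \ref{homotopycurves}), and each $h(\gamma)$ is isotopic to $\gamma$. Thus Lemma \ref{homotopycurves} yields an ambient isotopy $\Phi^i_t$ of $S$ with $\Phi^i_0=\mathrm{Id}$ and $\Phi^i_1(h(\gamma))=\gamma$ for every $\gamma\in\Gamma_i'$, so that $\Phi^i_1\circ h$ fixes each curve of $\Gamma_i'$ setwise. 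Since $\Gamma_i'$ fills $K_i$, the filling form of the Alexander method shows that $\Phi^i_1\circ h$ is homotopic to the identity on $K_i$; composing the isotopy $(\Phi^i_t)^{-1}$ with this homotopy and reparametrising produces a homotopy $H_i \colon S\times[0,1]\to S$ with $H_i(\cdot,0)=\mathrm{Id}$ and $H_i(\cdot,1)|_{K_i}=h|_{K_i}$, which is exactly conditions (1) and (2).

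The heart of the argument is arranging the compatibility condition (3), and this is where the nested structure of the $\Gamma_i'$ is used. For the base case I apply the construction above to $\Gamma_0'$, which fills $K_0$, to obtain $H_0$. For the inductive step, assume $H_{i-1}$ has been built. The new curves $\Gamma_i\setminus\Gamma_{i-1}$ fill the difference region $K_i\setminus K_{i-1}$ and, by the intersection hypotheses $i(\gamma,\gamma')=0$ for $\gamma\in\Gamma_i\setminus\Gamma_{i-1}'$ and $\gamma'\in\Gamma_{i-1}'$, are disjoint from every curve of $\Gamma_{i-1}'$; moreover the separating curves $\partial_{i-1}K_{i-1}$ belong to $\Gamma_{i-1}'$ and are fixed after the stage-$(i-1)$ straightening. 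I would therefore perform the Alexander method for $\Gamma_i'$ region by region: on $K_{i-1}$ reuse the isotopy already produced for $\Gamma_{i-1}'$, and on $K_i\setminus K_{i-1}$ realise the straightening of the new curves by an isotopy supported in $K_i\setminus K_{i-1}$, rel $\partial_{i-1}K_{i-1}$. Because the latter isotopy is supported away from $K_{i-1}$, the resulting $H_i$ satisfies $H_i|_{K_{i-1}\times[0,1]}=H_{i-1}|_{K_{i-1}\times[0,1]}$, and iterating the disjointness across all earlier stages gives $H_i|_{K_j\times[0,1]}=H_j|_{K_j\times[0,1]}$ for every $j<i$, which is condition (3).

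The step I expect to be the main obstacle is precisely this localisation: Lemma \ref{homotopycurves} is stated as a \emph{global} ambient isotopy of $S$, whereas to obtain (3) I need a \emph{relative} version in which the stage-$i$ isotopy restricts on $K_{i-1}$ to the stage-$(i-1)$ one. The key technical point to verify is that the isotopy straightening the new curves $\Gamma_i\setminus\Gamma_{i-1}$ can be chosen with support in $K_i\setminus K_{i-1}$ and fixing $\partial_{i-1}K_{i-1}$ pointwise. This is where the design of the family is essential: the inclusion $\partial_i K_i\subset\Gamma_i'$ guarantees that after straightening, the separating curves bounding $K_{i-1}$ are fixed, so the homotopy in the collar between consecutive stages decouples from what happens inside; combined with the disjointness of the new curves from $\Gamma_{i-1}'$ and the fact that $\Gamma_j\setminus\Gamma_{j-1}$ fills $K_j\setminus K_{j-1}$, this lets the Alexander method be applied independently on each annular piece. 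Once this relative statement is secured, conditions (1)--(3) follow directly from the construction.
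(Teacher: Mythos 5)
Your proposal follows the same route as the paper's proof: straighten the systems $\Gamma_i'$ using Lemma \ref{homotopycurves}, use the filling hypotheses together with the Alexander-method analysis of the complementary regions to conclude that the straightened map is homotopic to the identity on $K_i$, and exploit the disjointness built into the family $\{\Gamma_i'\}$ to decouple the stages. The one genuine difference is the mechanism for condition (3). The paper does not localize supports inductively; it chooses all the ambient isotopies $\tilde{H}_i$ at once, synchronized on the curve systems themselves, namely $\tilde{H}_i|_{\Gamma_i'\times[0,1]}=\tilde{H}_j|_{\Gamma_i'\times[0,1]}$ for $i<j$ (equation (\ref{E:GC})), and then performs the disc/annulus corrections compatibly; this is possible because the curves of $\Gamma_j\setminus\Gamma_i'$ are disjoint from $\Gamma_i'$, so the complementary regions of $\Gamma_j'$ lying inside $K_i$ coincide with those of $\Gamma_i'$. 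Your inductive localization buys a cleaner logical structure (each stage is an honestly relative problem, and (3) is immediate from the support condition), while the paper's synchronization avoids ever having to formulate a relative version of Lemma \ref{homotopycurves}; both rest on exactly the same hypotheses on $\{K_i\}$ and $\{\Gamma_i'\}$.

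Two caveats on your inductive step, which is the part you yourself flagged as the crux. First, the support claim is too strong as stated: after the stage-$(i-1)$ straightening, the images of the new curves of $\Gamma_i\setminus\Gamma_{i-1}$ are disjoint from the (now fixed) separating curves $\partial_{i-1}K_{i-1}$ and hence lie entirely on the correct side of them, but they need not lie inside $K_i$, since nothing in the construction has yet straightened $\partial_i K_i$. An isotopy supported in $K_i\setminus K_{i-1}$ cannot move a curve that exits $K_i$, so the correcting isotopy must instead be taken with support in $S\setminus K_{i-1}$, rel $\partial K_{i-1}$; this weaker localization is achievable (by the side argument just given plus the isotopy extension theorem rel boundary) and still yields (3). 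Second, your appeal to ``the filling form of the Alexander method'' silently uses two facts that the paper proves explicitly: that $h$ is orientation-preserving (argued by restricting to a compact subsurface of genus at least $3$, since an orientation-reversing map cannot act trivially on its curve complex, and region-preservation fails without this), and that the restriction of the straightened map to an interior annulus cannot be a nontrivial Dehn twist (this uses that $h$ fixes the isotopy class of \emph{every} curve of $S$, not only those of $\Gamma_i'$). Neither point is an obstacle, but both must be supplied for the argument on $K_i$ to close, and with them your plan is a correct proof.
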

The proof of this lemma is rather technical, the main difficulty being to prove that $H_{i}|_{K_{i} \times [0,1]} = H_{j}|_{K_{i} \times [0,1]}$ for $i < j$.  We leave it for later. We will use the lemma to finish the proof of theorem \ref{injCS}.\\
\indent For every $x\in S$ there exist $i\in \mathbf{N}$ such that
$x \in K_{i}$ and $x \notin \partial_{i}K_{i}$. Define $\fun{H}{S \times [0,1]}{S}$ as $H(x,t) = H_{i}(x,t)$. From (3) in the preceding lemma we deduce that 
 $H$ is well-defined. The function $H$ is clearly continuous, $H|_{S \times \{0\}}$ is the identity and $H|_{S \times \{1\}} = h$. Thus $H$ is an homotopy from the identity to $h$. This, modulo the proof of lemma \ref{L:HOM}, finishes the proof of theorem \ref{injCS}.\\[0.3cm]

\textbf{Proof of lemma \ref{L:HOM}}. The idea of the proof is a variant of the Alexander method (see \cite{Farb} for details on this method). By hypothesis, for every $\gamma\in \comp{S}$ , the curves $\gamma$ and $h(\gamma)$ are isotopic. Using lemma \ref{homotopycurves} we can assure the existence, for each $i\in \mathbf{N}$, of  an isotopy $\fun{\tilde{H}_{i}}{S \times [0,1]}{S}$, that takes $\gamma$ to $h(\gamma)$ for all $\gamma\in\Gamma_{i}'$ simultaneously.  Moreover, since for all $\gamma \in (\Gamma_{j} \backslash \Gamma_{i}')$ and $\gamma^{\prime} \in \Gamma_{i}'$ we have $i(\gamma,\gamma^{\prime}) = 0$,  we can ask 
\begin{equation}
	\label{E:GC}
\dsty \tilde{H}_{i|\hspace{1mm}_{\Gamma_{i}'\times [0,1]}} = \tilde{H}_{j|\hspace{1mm}_{\Gamma_{i}'\times [0,1]}},\hspace{8mm}\text{for $i < j$}.
\end{equation}
In other words, the homotopies can be chosen so that $\tilde{H}_{i}$ moves the curves in $\Gamma_{i}'$ at exactly the same time as $\tilde{H}_{j}$ moves the curves in $\Gamma_{i}'$ for $i < j$. Let us define $f_{i}:={H}_{i| S\times \{1\}}$. Remark that $h^{-1}\circ f_{i}$ fixes all the points in  $\Gamma_{i}'$. On the other hand, $h$ has to be orientation-preserving, since otherwise for every compact subsurface  $S'\hookrightarrow S$ we could find an homeomorphism that reverses orientation and at the same time acts trivially on $\comp{S'}$, which is not possible if $S'$ has genus bigger than 3 and at least one boundary component. Hence $h^{-1}\circ f_{i}$ is orientation-preserving and, by the same argument used by Farb and Margalit (see proof proposition 2.8, p. 62-63, \cite{Farb}), we have that $h^{-1}\circ f_{i}$ sends each connected region in $S\setminus\Gamma_{i}'$ to itself.
 By hypotheses $\Gamma_{0}$ fills $K_{0}$ and $\Gamma_{j}\setminus\Gamma_{j-1}$ fills $K_{j}\setminus K_{j-1}$ for all $j >1$. Hence:
\begin{equation}
S\setminus\Gamma_{i}'=\left(\bigsqcup_{k=1}^{n_{i}} A_{k}\right)\sqcup \left(\bigsqcup_{k=1}^{m_{i}} D_{k}\right)\sqcup S_{i}
\end{equation}
where each $D_{k}$ is homeomorphic to a disc, each $A_{k}$ is homeomorphic to an annulus and $S_{i}=S\setminus K_{i}$ is an infinite genus surface.  Furthermore:
\begin{enumerate}
\item The boundary of each disc $D_{k}$ is formed by segments contained in $\Gamma_{i}$.
\item The boundary of each annulus $A_{k}$ is either contained in $\Gamma_{i}'$ or one of its connected components is also a connected component of the boundary of $S$.
\end{enumerate}
From Alexander's lemma, we deduce that $h^{-1}\circ f_{i}$ restricted to $D_{k}$ is isotopic to $Id_{| D_{k}}$. 
When $A_{k}$ shares a boundary component with $S$, 
the restriction of $h^{-1}\circ f_{i}$ to $A_{k}$
is isotopic to the identity, for we are allowed to perform isotopies on $A_{k}$ that do not fix the boundary of $S$ pointwise. Finally, when $A_{k}$ shares no boundary component with the boundary of $S$ the restriction of $h^{-1}\circ f_{i}$ to $A_{k}$ is also isotopic to the identity for else this restriction will be a non-trivial Dehn twist and we could then find a curve $\gamma\in\comp{S}$ intersecting the interior of $A_{k}$ which is not fixed by $h^{-1}$. From this three facts we conclude that $h^{-1}\circ f_{i}$ is isotopic to the identity in $K_{i}$ and hence $f_{i}$ is isotopic to $h$ in $K_{i}$. The composition of these two isotopies form the desired isotopy $H_{i}$.\qed

\subsection{Rigidity.}
	\label{SS:rigidity}
\indent In this section we give the proof of theorem \ref{S1homeoS2} and corollary \ref{CorIsoNSC}.  This requires some auxiliary  facts and lemmas, that we state and prove in the following paragraphs.\\
\indent Through this section 
 $S_{1}$ and $S_{2}$ will denote (connected) infinite genus surfaces with a finite number of boundary components and $\fun{\phi}{\comp{S_{1}}}{\comp{S_{2}}}$ an isomorphism. We remark that the image via $\phi$ of any pants decomposition of $S_{1}$ is a pants decomposition of $S_{2}$. Moreover, if $P$ is a pants decomposition of $S_{1}$, then $\alpha, \beta \in P$ are adjacent with respect to $P$ if and only if $\phi(\alpha)$ and $\phi(\beta)$ are adjacent with respect to $\phi(P)$.
The sufficiency of this statement can be found in \cite{shackleton} and the necessity follows from the fact that we are dealing with an isomorphism of the curve complex. Therefore  $\fun{\phi}{\comp{S_{1}}}{\comp{S_{2}}}$ induces a map
\begin{equation}
	\label{E:inducediso}
\fun{\varphi}{\adj{P}}{\adj{\phi(P)}}
\end{equation}
as follows: $\alpha \mapsto \varphi(\alpha) := \phi(\alpha)$. Moreover, $\varphi$ is an isomorphism. For this reason cut points of $\adj{P}$ go to cut points under $\phi$ and this isomorphism sends:
 \begin{enumerate}
  \item Non-outer separating curves  to non-outer separating curves.
  \item Nonseparating curves to nonseparating curves.
  \item Outer curves  to outer curves.
 \end{enumerate}
 The proof of (1) and (2) can be found in \cite{shackleton}, where as (3) follows from (1), (2) and the fact that $\phi$ is an isomorphism. The following lemmas can be deduced from the work of Irmak (see \cite{Irmak}), but since we use them several times later, we present elementary and simple proofs. 
\begin{Lema}\label{pairofpants}
 Let $S_{1}$ and $S_{2}$ be infinite genus surfaces and let $\fun{\phi}{\comp{S_{1}}}{\comp{S_{2}}}$ be an isomorphism. If $\alpha$, $\beta$ and $\gamma$ are curves that bound a pair of pants on $S_{1}$, then their images bound a pair of pants on $S_{2}$.
\end{Lema}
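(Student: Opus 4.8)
The plan is to reduce the statement to the preservation of adjacency in pants decompositions, and then to discard the only alternative configuration by an Euler characteristic argument that exploits the infinite genus of the surfaces.

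First I would complete $\{\alpha,\beta,\gamma\}$ to a pants decomposition $P$ of $S_1$. Since $\alpha,\beta,\gamma$ bound a pair of pants $P_0$ and $P_0$ contains no essential curve, $P_0$ must be one of the pairs of pants of $S_P$; in particular $\alpha,\beta,\gamma$ are pairwise adjacent in $\adj{P}$. As recalled before the statement, $\phi$ sends $P$ to a pants decomposition $\phi(P)$ of $S_2$ and the induced map $\fun{\varphi}{\adj{P}}{\adj{\phi(P)}}$ is an isomorphism; hence $a:=\phi(\alpha)$, $b:=\phi(\beta)$, $c:=\phi(\gamma)$ are pairwise disjoint, pairwise non-isotopic, and pairwise adjacent in $\adj{\phi(P)}$. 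It remains to promote this pairwise adjacency to the existence of a single pair of pants of $S_{\phi(P)}$ having $a,b,c$ as its three boundary curves.

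Here the key dichotomy appears. In the pants decomposition $\phi(P)$ each curve borders exactly two pairs of pants, so analysing which pants realise the three adjacencies leaves exactly two possibilities: either two of the realising pants coincide, in which case that pants already carries $a,b,c$ on its boundary and we are done; or the three adjacencies are realised by three pairwise distinct pairs of pants $T_1=\{a,b,\delta_1\}$, $T_2=\{b,c,\delta_2\}$, $T_3=\{c,a,\delta_3\}$ of $S_{\phi(P)}$, with $\delta_1,\delta_2,\delta_3\notin\{a,b,c\}$ and each of $a,b,c$ bordering two of the $T_i$. This second ``cyclic'' configuration is the main obstacle: it is a genuine configuration of three pairwise disjoint, pairwise adjacent curves that do not cobound a pair of pants, and it is invisible to adjacency alone.

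To discard the cyclic configuration I would transport it back to $S_1$ and use the hypothesis there. The curves $\delta_1,\delta_2,\delta_3$ lie in $\phi(P)$, so their preimages $d_i:=\phi^{-1}(\delta_i)$ lie in $P$, and since $\varphi$ is an isomorphism of adjacency graphs the same incidences hold in $\adj{P}$: the curve $\alpha$ is adjacent to $d_1$ and $d_3$, $\beta$ to $d_1$ and $d_2$, and $\gamma$ to $d_2$ and $d_3$, with $d_i\notin\{\alpha,\beta,\gamma\}$. Now I invoke the hypothesis: in $S_1$ the curves $\alpha,\beta,\gamma$ actually bound the pair of pants $P_0$, so each of them borders $P_0$ and exactly one further pants of $S_P$. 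As $d_1,d_3$ are distinct from $\beta,\gamma$, the adjacencies $\alpha\sim d_1$ and $\alpha\sim d_3$ cannot be realised through $P_0$ and must be realised through the unique other pants $R_\alpha$ bordering $\alpha$; thus $R_\alpha=\{\alpha,d_1,d_3\}$, and symmetrically $R_\beta=\{\beta,d_1,d_2\}$ and $R_\gamma=\{\gamma,d_2,d_3\}$. Then the four pairs of pants $P_0,R_\alpha,R_\beta,R_\gamma$ are glued along the six curves $\alpha,\beta,\gamma,d_1,d_2,d_3$, each shared by exactly two of them; hence their union is a closed subsurface of $S_1$ of Euler characteristic $-4$, i.e. a closed surface of genus $3$. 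Since $S_1$ is connected, this closed subsurface would be all of $S_1$, contradicting that $S_1$ has infinite genus. The degenerate cases in which some $d_i$ coincide are excluded in the same way, as they would force a curve of $P$ to border three pairs of pants. Therefore the cyclic configuration cannot occur, $a,b,c$ bound a pair of pants in $S_2$, and the lemma follows.
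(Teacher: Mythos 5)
Your core idea --- completing $\{\alpha,\beta,\gamma\}$ to a pants decomposition $P$, transporting the ``cyclic'' configuration back through the adjacency-graph isomorphism $\varphi$, and closing up a genus-$3$ surface to contradict infinite genus --- is sound in the case it covers, and it is a genuinely different (and in that case slicker) route than the paper's case analysis. But it has a real gap: you assume that the third boundary circle of each realizing pair of pants $T_{i}$ in $S_{2}$ is a \emph{curve} $\delta_{i}$ of the decomposition $\phi(P)$. In this section $S_{1}$ and $S_{2}$ are allowed to have (finitely many) boundary components, so a pair of pants of $S_{2}$ cut along $\phi(P)$ may have a boundary component of $S_{2}$ as one of its boundary circles; in that case $\delta_{i}$ is not a vertex of $\comp{S_{2}}$, the preimage $\phi^{-1}(\delta_{i})$ does not exist, and the transport-and-glue argument collapses. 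For instance, if $\delta_{1}$ is a boundary component of $S_{2}$ while $\delta_{2},\delta_{3}$ are curves, the pulled-back data in $S_{1}$ is a perfectly consistent configuration in an infinite genus surface, and no contradiction follows from your argument as written. This is not a fussy edge case: the paper's own proof has to confront exactly this possibility (in its case of three nonseparating curves it rules out ``$\delta_{3}$ not essential, i.e.\ isotopic to a boundary component'' by a degree count in the adjacency graph), and the very next lemma in the paper concerns pants that meet $\partial S$. Your argument is likely repairable --- $\varphi$ preserves vertex degrees, and in each mixed case one can force the remaining boundary circles of $R_{\alpha},R_{\beta},R_{\gamma}$ to lie on $\partial S_{1}$, so that $P_{0}\cup R_{\alpha}\cup R_{\beta}\cup R_{\gamma}$ is a compact surface with all of its boundary in $\partial S_{1}$, again forcing $S_{1}$ to be compact --- but this degree-matching step is nowhere in your proof, and without it the proof only treats configurations in which no $T_{i}$ touches $\partial S_{2}$.

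A second, smaller omission: the paper reads the hypothesis so as to include the degenerate situation $\alpha\neq\beta=\gamma$, i.e.\ a pair of pants two of whose boundary circles are glued to the same curve; it treats this first, with a degree-one argument in $\adj{P}$, and that case is actually used later (in the proof of the peripheral-pairs lemma, where $\delta=\phi(\alpha)$ or $\delta=\phi(\beta)$ must be considered). Your proof implicitly assumes $\alpha,\beta,\gamma$ are pairwise distinct and non-isotopic from the start, so this case is not addressed at all.
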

\begin{proof}
If $\alpha \neq \beta = \gamma$, then $\beta$ cannot be an outer curve and hence its image is not an outer curve. Also, in any pants decomposition $P$ the curve $\beta$ will have degree one as a vertex of $\mathcal{A}(P)$. Hence $\phi(\beta)$ will also have degree one as vertex of $\mathcal{A}(\phi(P))$, given that (\ref{E:inducediso}) is an isomorphism. Then, the only option left is for $\beta$ to be the boundary of a pair of pants twice, as in the option of the left in figure \ref{deg1options}. Therefore $\phi(\alpha)$ and $\phi(\beta) = \phi(\gamma)$ bound a pair of pants on $S_{2}$.

\begin{figure}
\centering
\begin{tikzpicture}
\shadedraw[inner color=white!35!black, outer color=white!90!black, very thick] 
(-1+1,2-1) to[out=100,in=270] (-1.2+1,3-1) to[out=90,in=260]
(-1+1,4-1) to[out=180,in=0] (-4+1,4-1) to[out=190,in=90]
(-4.5+1,3-1) to[out=280,in=180] (-4+1,2-1) to[out=0,in=180] (-1+1,2-1);
\shadedraw[left color=white!40!black, right color= white, very thick] 
(-1+1,2-1) to[out=100,in=270] (-1.2+1,3-1) to[out=90,in=260]
(-1+1,4-1) to[out=280,in=90] (-.8+1,3-1) to[out=270,in=80]
(-1+1,2-1);
\filldraw[fill=white, very thick] 
(-3.25+1,2.8-1) to[out=340,in=180] (-2.75+1,2.6-1) to[out=0,in=200] (-2.25+1,2.8-1)
to[out=160,in=0] (-2.75+1,3-1) to[out=180,in=20] (-3.25+1,2.8-1);
\draw[ very thick] (-3.5+1,3-1) to[out=340,in=150](-3.25+1,2.8-1) to[out=340,in=180] (-2.75+1,2.6-1) to[out=0,in=200] (-2.25+1,2.8-1)
to[out=20,in=200] (-2+1,3-1);

\draw[very thick] (-3.25+1,2.8-1) to[out=110,in=0] 
(-3.8+1,3.5-1) node[anchor=south] {$v$}  to[out=180,in=70]  (-4.5+1,3-1);
 \draw [dashed, thick] (-4.5+1,3-1)  to[out=290,in=180] (-3.8+1,2.5-1)  to[out=0,in=250] (-3.25+1,2.8-1);
 
 
 \shadedraw[inner color=white!35!black, outer color=white!90!black, very thick] 
(2,0) to[out=90,in=270] (1,2) 
 to[out=290,in=180] (2.5-1,-.25+2)  to[out=0,in=240] (3-1,0+2)
(2,+2)  to[out=290,in=180] (2.5,-.25+2)  to[out=0,in=240] (3,0+2)  to[out=290,in=180] (2.5+1,-.25+2)  to[out=0,in=240] (3+1,0+2)
 to[out=270,in=90] (3,0) to[out=250,in=0] (2.5,-.25) to[out=180,in=290] (2,0);
 \shadedraw[inner color=white!35!black, outer color=white!90!black, very thick] 
(2+1,0+2) to[out=90,in=270] (1+1,2+2) 
 to[out=290,in=180] (2.5-1+1,-.25+2+2)  to[out=0,in=240] (3-1+1,0+2+2)
(2+1,+2+2)  to[out=290,in=180] (2.5+1,-.25+2+2)  to[out=0,in=240] (3+1,0+2+2)  to[out=290,in=180] (2.5+1+1,-.25+2+2)  to[out=0,in=240] (3+1+1,0+2+2)
 to[out=270,in=90] (3+1,0+2) to[out=250,in=0] (2.5+1,-.25+2) to[out=180,in=290] (2+1,0+2); 
 
 \draw[very thick] (2,0)  to[out=290,in=180] (2.5,-.25)  to[out=0,in=240] (3,0);
 \draw[thick, dashed]  (2,0)  to[out=70,in=180] (2.5,.25)  to[out=0,in=110] (3,0) node[anchor=west]{$u$};


 \shadedraw[top color=white, bottom color=white!40!black, very thick, shift={(1 ,2 )}] (2-1,0+2)  to[out=290,in=180] (2.5-1,-.25+2)  to[out=0,in=240] (3-1,0+2)
 to[out=110,in=0] (2.5-1,.25+2) 
  to[out=180,in=70] (2-1,0+2);

 \shadedraw[top color=white, bottom color=white!40!black, very thick, shift={(3 ,2 )}] (2-1,0+2)  to[out=290,in=180] (2.5-1,-.25+2)  to[out=0,in=240] (3-1,0+2)
 to[out=110,in=0] (2.5-1,.25+2) 
  to[out=180,in=70] (2-1,0+2);

 \shadedraw[top color=white, bottom color=white!40!black, very thick] (2-1,0+2)  to[out=290,in=180] (2.5-1,-.25+2)  to[out=0,in=240] (3-1,0+2)
 to[out=110,in=0] (2.5-1,.25+2) 
  to[out=180,in=70] (2-1,0+2);

 \draw[very thick] (2,+4)  to[out=290,in=180] (2.5,-.25+4)  to[out=0,in=240] (3,0+4);
 \draw[very thick, dashed]  (2,0+4)  to[out=70,in=180] (2.5,.25+4)  to[out=0,in=110] (3,0+4);


 \shadedraw[top color=white, bottom color=white!40!black, very thick] (2-1,0+2)  to[out=290,in=180] (2.5-1,-.25+2)  to[out=0,in=240] (3-1,0+2)
 to[out=110,in=0] (2.5-1,.25+2) 
  to[out=180,in=70] (2-1,0+2);
 
 
  \draw[very thick] (2+1,0+2)  to[out=290,in=180] (2.5+1,-.25+2)  to[out=0,in=240] (3+1,0+2)  node[anchor=west]{$v$} ;
 \draw[thick, dashed]  (2+1,0+2)  to[out=70,in=180] (2.5+1,.25+2)  to[out=0,in=110] (3+1,0+2);
\draw [very thick](-.5,1) to[out=120,in=240] (-.5,3);
\draw [dashed](-.5,1) to[out=70,in=290] (-.5,3);
\draw (-.5,3.2) node{$u$};

\end{tikzpicture}

\caption{The two options for $\deg(v)=1$.} \label{deg1options}
\end{figure}
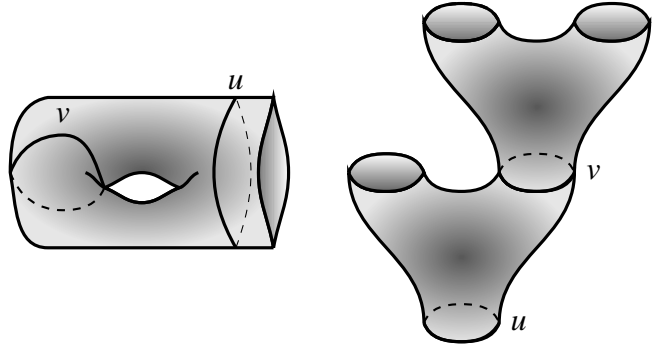

\indent It is impossible to bound a pair of pants using two separating curves and one nonseparating curve. Hence, if
$\alpha \neq \beta \neq \gamma \neq \alpha$, we only have the following cases according to the number of separating curves:
\begin{enumerate}
 \item Three separating curves. In this case, $\phi(\alpha)$, $\phi(\beta)$ and $\phi(\gamma)$ are three different separating curves, since separating curves go to separating curves as mentioned before. If these curves did not bound a pair of pants on $S_{2}$ we would have a pair of pants bounded by $\phi(\alpha)$ and $\phi(\beta)$ but not bounded by $\phi(\gamma)$, another pair of pants bounded by $\phi(\beta)$ and $\phi(\gamma)$ but not bounded by $\phi(\alpha)$ and another pair of pants bounded by $\phi(\gamma)$ and $\phi(\alpha)$ but not bounded by $\phi(\beta)$, as in figure \ref{pairofpantscirc}. But then none of these curves would be separating, leading us to a contradiction. Hence, $\phi(\alpha)$, $\phi(\beta)$ and $\phi(\gamma)$ bound a pair of pants on $S_{2}$.
 \item One separating curve. Let $\alpha$ and $\gamma$ be nonseparating curves and let $\beta$ be a separating curve. Then $\phi(\alpha)$ and $\phi(\gamma)$ are nonseparating curves and $\phi(\beta)$ is a separating curve, given the properties of $\phi$ mentioned before. If these curves did not bound a pair of pants on $S_{2}$, we would have a pair of pants bounded by $\phi(\alpha)$ and $\phi(\beta)$ but not bounded by $\phi(\gamma)$, another pair of pants bounded by $\phi(\beta)$ and $\phi(\gamma)$ but not bounded by $\phi(\alpha)$, but since $\phi(\beta)$ is a separating curve there cannot exist a pair of pants bounded by both $\phi(\alpha)$ and $\phi(\gamma)$, given that they are on different connected components of $S_{2} \backslash \{\phi(\beta)\}$, which leads us to a contradiction ($\phi(\alpha)$ and $\phi(\gamma)$ must be adjacent). Then $\phi(\alpha)$, $\phi(\beta)$ and $\phi(\gamma)$ bound a pair of pants on $S_{2}$.
 \item Three nonseparating curves. Given that $\alpha$, $\beta$ and $\gamma$ are nonseparating curves, we can always find a pants decomposition $P$ such that all their neighbours in $\adj{P}$ are nonseparating, $\alpha$ and $\beta$ have degree three in $\adj{P}$, $\gamma$ has degree four in $\adj{P}$ and $\alpha$ and $\gamma$ only have one common neighbour $\beta$ in $\adj{P}$.  For an example consider figure \ref{pairofpantsfig}. 
 Then, $\phi(\alpha)$ and $\phi(\beta)$ have degree three, $\phi(\gamma)$ has degree four, and all their neighbours are nonseparating. If $\phi(\alpha)$, $\phi(\beta)$ and $\phi(\gamma)$ do not bound a pair of pants on $S_{2}$ then there exist a pair of pants bounded by $\phi(\alpha)$, $\phi(\beta)$ and $\delta_{1} \neq \phi(\gamma)$, another pair of pants bounded by $\phi(\beta)$, $\phi(\gamma)$ and $\delta_{2} \neq \phi(\alpha)$, and another pair of pants bounded by $\phi(\alpha)$, $\phi(\gamma)$ and $\delta_{3} \neq \phi(\beta)$. Since $\phi(\beta)$ is the only common neighbour of $\phi(\alpha)$ and $\phi(\gamma)$, then $\delta_{3}$ is not an essential curve, which means it is isotopic to a boundary component, but this leads us to a contradiction, since $\phi(\gamma)$ would then have degree at most $3$.
 \end{enumerate}

\begin{figure}
\centering
\begin{tikzpicture}
\shadedraw[inner color=white!35!black, outer color=white!90!black, very thick]
(0,-1) to[out=0,in=100] (1.5,-2.5) to[out=80,in=180]
(2.5,-1.5) to[out=150,in=270] (1,2) to[out=190,in=0]
(0,1.7) to[out=180,in=350] (-1,2) to[out=270,in=30]
(-2.5,-1.5) to[out=0,in=100] (-1.5,-2.5) to[out=80,in=180] (0,-1);
\filldraw [fill=white, very thick] (0,0) circle (.5);

\shadedraw[top color=white!35!black, bottom color=white!90!black, very thick] (1.5,-2.5) to[out=80,in=180]
(2.5,-1.5)  to[out=260,in=10] (1.5,-2.5);

\shadedraw[top color=white!35!black, bottom color=white!90!black, very thick,rotate around={90:(2,-2)},shift={(0,3.99)}] (1.5,-2.5) to[out=80,in=180]
(2.5,-1.5)  to[out=260,in=10] (1.5,-2.5);

\shadedraw[top color=white!35!black, bottom color=white!90!black, very thick] (1,2) to[out=190,in=0]
(0,1.7) to[out=180,in=350] (-1,2) to[out=10,in=170] (1,2);


\draw[very thick] (0,-1) to[out=190,in=190] (0,-.5);
\draw[very thick, dashed] (0,-1) to[out=80,in=280] (0,-.5);

\draw[very thick] (0.5,0) to[out=280,in=190] (1.5,-.5);
\draw[very thick, dashed] (0.5,0) to[out=80,in=190] (1.5,-.5);

\draw[very thick] (-0.5,0) to[out=270,in=190] (-1.4,-.5);
\draw[very thick, dashed] (-0.5,0) to[out=80,in=190] (-1.4,-.5);

\draw (0,-1.5) node{$\phi(\gamma)$} ;
\draw (-2,0) node{$\phi(\alpha)$} ;
\draw (2,0) node{$\phi(\beta)$} ;

\draw (-1.7,-1.5) node{$\delta_{3}$} ;
\draw (1.6,-1.5) node{$\delta_{2}$} ;
\draw (0,1.2) node{$\delta_{1}$} ;

\end{tikzpicture}
\caption{If $\phi(\alpha)$, $\phi(\beta)$ and $\phi(\gamma)$ do not bound a pair of pants.}
\label{pairofpantscirc}
\end{figure}
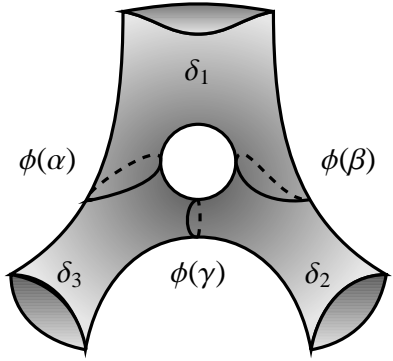

 
 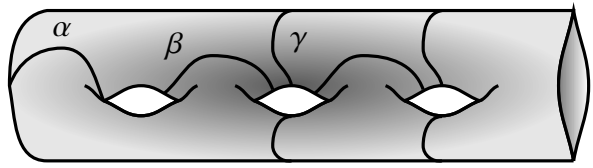
\begin{figure}
 \centering
 \begin{tikzpicture}
 
\shadedraw[inner color=white!35!black, outer color=white!90!black, very thick] 
(3,2) to[out=100,in=270] (2.8,3) to[out=90,in=260]
(3,4) to[out=180,in=0] (-4,4) to[out=190,in=90]
(-4.5,3) to[out=280,in=180] (-4,2) to[out=0,in=180] (3,2);

\shadedraw[left color=white!40!black, right color= white, very thick,shift={(3,1)}] 
(-1+1,2-1) to[out=100,in=270] (-1.2+1,3-1) to[out=90,in=260]
(-1+1,4-1) to[out=280,in=90] (-.8+1,3-1) to[out=270,in=80] (-1+1,2-1);

\foreach \x in {0,2,4}
\filldraw[fill=white, very thick, shift={(\x,0)}] 
(-3.25,2.8) to[out=340,in=180] (-2.75,2.6) to[out=0,in=200] (-2.25,2.8)
to[out=160,in=0] (-2.75,3) to[out=180,in=20] (-3.25,2.8);
\foreach \x in {0,2,4}
\draw[ very thick, shift={(\x,0)}] (-3.5,3) to[out=340,in=150](-3.25,2.8) to[out=340,in=180] (-2.75,2.6) to[out=0,in=200] (-2.25,2.8)
to[out=20,in=200] (-2,3);

\draw[very thick] (-3.25,2.8) to[out=110,in=0] 
(-3.8,3.5) node[anchor=south] {$\alpha$}  to[out=180,in=70]  (-4.5,3);
 
\foreach \x in {2,4}
\draw[very thick] (-2.75+\x,3) to[out=100,in=270] (-3+\x,3.5) to[out=90,in=190] (-2.75+\x,4);
\draw (-.65,3.3) node[anchor=south] {$\gamma$};
\foreach \x in {2,4}
\draw[very thick] (-2.75+\x,2.59) to[out=180,in=90] (-3+\x,2.2) to[out=270,in=190] (-2.75+\x,2); 
\foreach \x in {0,2}
\draw[very thick] (-2.44+\x,2.9) to[out=40,in=180] (-1.8+\x,3.4) to[out=0,in=110]
(-1+\x,2.95);
\draw (-2.3,3.2) node[anchor=south] {$\beta$};
 \end{tikzpicture}
\caption{Three nonseparating curves bounding a pair of pants.}
\label{pairofpantsfig}	
 \end{figure}

\end{proof}
\begin{remark}
 If $P$ is a pants decomposition and $\alpha \in P$ is a nonseparating curve of degree $2$ in $\adj{P}$ such that its neighbours are also nonseparating curves, then $\alpha$ forms part of two peripheral pairs, namely one with each neighbour (otherwise either of its neighbours or $\alpha$ itself would become separating).
 \end{remark}
\begin{Lema}\label{peripheralpairs}
 Let $S_{1}$ and $S_{2}$ be infinite genus surfaces and let $\fun{\phi}{\comp{S_{1}}}{\comp{S_{2}}}$ be an isomorphism. If $\alpha$ and $\beta$ form a peripheral pair, then their images form a peripheral pair. In particular, $S_{1}$ and $S_{2}$ have the same number of boundary components.
\end{Lema}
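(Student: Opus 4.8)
The plan is to give a purely combinatorial characterization of peripheral pairs inside $\comp{S_1}$ that is manifestly preserved by the isomorphism $\phi$, and then to read off the boundary count. Since $\phi$ sends nonseparating curves to nonseparating curves and a pants decomposition $P$ of $S_1$ to a pants decomposition $\phi(P)$ of $S_2$, and since the induced map $\fun{\varphi}{\adj{P}}{\adj{\phi(P)}}$ is a graph isomorphism, the images $\phi(\alpha),\phi(\beta)$ of a peripheral pair $\alpha,\beta$ are disjoint nonseparating curves that are adjacent in $\adj{\phi(P)}$; hence they cobound a pair of pants $Q'$ in $S_2$. The whole problem is therefore to show that the third side of $Q'$ is a boundary component of $S_2$ rather than an essential curve. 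The basic tool is Lemma \ref{pairofpants} together with the remark that $\phi^{-1}$ is again an isomorphism of curve complexes, so Lemma \ref{pairofpants} is available in both directions.

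First I would isolate the clean case. Call a peripheral pair $\{\alpha,\beta\}$ \emph{simple} if there is no essential curve $\delta$ with $\{\alpha,\beta,\delta\}$ bounding a pair of pants; equivalently, the side of $\alpha\cup\beta$ opposite the peripheral pair of pants $P_{0}$ (the one carrying the boundary component $c$) is an infinite genus subsurface rather than a pair of pants. For a simple pair the argument is immediate: if the third side of $Q'$ were an essential curve $\gamma'$, then $\{\phi(\alpha),\phi(\beta),\gamma'\}$ would bound a pair of pants in $S_2$, and applying Lemma \ref{pairofpants} to $\phi^{-1}$ would yield a curve $\phi^{-1}(\gamma')$ with $\{\alpha,\beta,\phi^{-1}(\gamma')\}$ bounding a pair of pants, contradicting simplicity. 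Hence the third side of $Q'$ is a boundary component and $\{\phi(\alpha),\phi(\beta)\}$ is a peripheral pair.

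The remaining, and main, difficulty is a \emph{non-simple} peripheral pair: here $\alpha,\beta$ also cobound a pair of pants $Q$ with an essential curve $\delta$ on the side opposite $P_{0}$. A short computation shows that $P_{0}\cup_{\alpha\cup\beta}Q$ is a genus one subsurface with boundary $\{c,\delta\}$, so that $\delta$ is a non-outer separating curve whose \emph{finite} complementary side is this genus one piece carrying $c$. Since $\phi$ preserves non-outer separating curves, $\phi(\delta)$ is non-outer separating and Lemma \ref{pairofpants} gives that $\{\phi(\alpha),\phi(\beta),\phi(\delta)\}$ bounds a pair of pants; what must be excluded is that the side of $\{\phi(\alpha),\phi(\beta)\}$ opposite $\phi(\delta)$ is infinite genus. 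I expect this to be the hard step. I would handle it by exploiting that the link of $\delta$ in $\comp{S_1}$ is the simplicial join of the curve complexes of the two sides of $\delta$, that $\phi$ carries this link isomorphically onto the link of $\phi(\delta)$, and that a join isomorphism must match the finite dimensional join factor with a factor isomorphic to the curve complex of the genus one side. Since the curve complex of a finite type surface determines its homeomorphism type, the finite side of $\phi(\delta)$ is then again a genus one surface with a boundary component $c'$, whence the side of $\{\phi(\alpha),\phi(\beta)\}$ opposite $\phi(\delta)$ is precisely the pair of pants with boundary $\{\phi(\alpha),\phi(\beta),c'\}$, and $\{\phi(\alpha),\phi(\beta)\}$ is peripheral.

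Finally, for the boundary count I would sidestep the non-simple case entirely. Choose a pants decomposition $P$ of $S_1$ in which each of the $n$ boundary components $c_1,\dots,c_n$ is a side of exactly one pair of pants $P_{0}^{i}$, and arrange the other two sides $\alpha_i,\beta_i$ of $P_{0}^{i}$ so that their opposite side is infinite genus; then each $\{\alpha_i,\beta_i\}$ is a \emph{simple} peripheral pair. By the clean case each $\{\phi(\alpha_i),\phi(\beta_i)\}$ is peripheral, cobounding a pair of pants of $\phi(P)$ with a boundary component $c_i'$ of $S_2$. Since a boundary component is a side of a unique pair of pants of $\phi(P)$ and $\phi$ is injective, the $c_i'$ are pairwise distinct, so $S_2$ has at least $n$ boundary components; running the same argument with $\phi^{-1}$ gives the reverse inequality, and hence $S_1$ and $S_2$ have the same number of boundary components.
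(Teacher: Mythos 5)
Your case division collapses before the proof starts: the notion of a \emph{simple} peripheral pair is vacuous in an infinite genus surface. Given any peripheral pair $\{\alpha,\beta\}$, let $R$ be the side of $\alpha\cup\beta$ opposite the peripheral pair of pants $P_{0}$ (this side is connected, since $\alpha$ and $\beta$ are nonseparating, and has infinite genus). Take an embedded arc $a$ in $R$ joining $\alpha$ to $\beta$ and let $\delta$ be the third boundary curve of a regular neighbourhood $Q$ of $\alpha\cup a\cup\beta$. Then $\{\alpha,\beta,\delta\}$ bounds the pair of pants $Q$, and $\delta$ is essential because one of its sides is the genus one surface $P_{0}\cup Q$ with boundary $\{c,\delta\}$ while the other has infinite genus. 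So \emph{every} peripheral pair admits such a $\delta$; your claimed equivalence (``no essential $\delta$'' if and only if ``the opposite side is infinite genus rather than a pair of pants'') confuses the pair of pants $Q$, which is only a subsurface of the opposite side, with the opposite side itself. Consequently your ``clean case'' is empty, and, more damagingly, the final boundary-counting paragraph, which requires producing a simple peripheral pair at each boundary component, cannot be carried out at all.

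In the non-simple case, which is therefore the only case, the decisive step rests on the assertion that ``the curve complex of a finite type surface determines its homeomorphism type.'' This is false, and false exactly for the surface your argument produces: the finite side of $\delta$ is a two-holed torus $S_{1,2}$, and $\comp{S_{1,2}}\cong\comp{S_{0,5}}$ (the classical exceptional isomorphism --- the very reason the two-holed torus is excluded from the finite-type theorem quoted in the introduction). So even granting the only-sketched uniqueness of the join decomposition of the link, you cannot conclude that the finite side of $\phi(\delta)$ has genus one; a priori it could be a planar surface containing four boundary components of $S_{2}$, and then peripherality of $\{\phi(\alpha),\phi(\beta)\}$ does not follow. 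The hole is repairable --- if the join factors match, then $\phi(\alpha)$ lies in that finite side, and every curve in a planar side whose other boundary curves lie in $\partial S_{2}$ is separating in $S_{2}$, contradicting that $\phi$ preserves nonseparating curves --- but this step is absent from your argument. Note that the paper sidesteps both problems with elementary adjacency-graph bookkeeping: it splits according to whether $S_{1}$ admits two peripheral pairs with pairwise disjoint curves, and in each case builds a pants decomposition in which $\beta$ has degree $2$ (respectively $\alpha,\beta$ have degree $3$) in $\adj{P}$ with all neighbours nonseparating; the preservation of degrees, the remark that a degree-$2$ nonseparating vertex with nonseparating neighbours forms peripheral pairs with both neighbours, and Lemma \ref{pairofpants} applied to $\phi$ and $\phi^{-1}$ then force the third side of the image pair of pants to be a boundary component.
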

\begin{proof}
If $S_{1}$ admits at least $2$  peripheral pairs such that their curves are pairwise disjoint as in figure \ref{convpantsdec}, then we can always find a pants decomposition $P$ of $S_{1}$ such that all the neighbours of $\beta$ are nonseparating, $\deg(\alpha)=3$ and $\deg(\beta) = 2$. Then all the neighbours of $\phi(\beta)$ are nonseparating, and $\phi(\beta)$ has degree $2$, hence it has to form a peripheral pair with $\phi(\alpha)$ by the previous remark.\\
\indent If for any two peripheral pairs in $S_{1}$ at least one curve of each pair intersect each other, we can always find a pants decomposition $P$ of $S_{1}$ such that all the neighbours of $\alpha$ and $\beta$ are nonseparating, $\deg(\alpha)= \deg(\beta) = 3$, and there is only one pair of pants in $S_{1} \backslash P$ that is bounded by $\alpha$ and $\beta$ at the same time, namely the one formed by $\alpha$ and $\beta$ being a peripheral pair. Then $\phi(P)$ is a pants decomposition with all the neighbours of $\phi(\alpha)$ and $\phi(\beta)$ being nonseparating, $\deg(\phi(\alpha)) = \deg(\phi(\beta)) = 3$ and there exists a pair of pants in $S_{2}$ bounded by $\phi(\alpha)$, $\phi(\beta)$ and $\delta$. Due to lemma \ref{pairofpants} applied to $\phi$ and $\phi^{-1}$, $\delta$ cannot be an essential curve different to both $\phi(\alpha)$ and $\phi(\beta)$, but if $\delta = \phi(\alpha)$ or $\delta = \phi(\beta)$ then either $\phi(\beta)$ of $\phi(\alpha)$, respectively, becomes separating. Then $\delta$ is isotopic to a boundary component and so, $\phi(\alpha)$ and $\phi(\beta)$ form a peripheral pair.\\
\indent This result implies that $S_{2}$ has at least as many boundary components as $S_{1}$, and applying the same result to $\phi^{-1}$ we get that they have the same number of boundary components, even if this number is infinite.
\end{proof}
\begin{figure}
\centering
\begin{tikzpicture}

\shadedraw[inner color=white!35!black, outer color=white!90!black, very thick]

(-1,0) to[out=90,in=0] (-1.5,.5) to[out=45,in=335] (-1.5,1)
to[out=0,in=270] (-1,1.25) to[out=90,in=0] (-1.5,1.5)
to[out=0,in=270] (-1,2) to[out=0,in=180] (0,2) to[out=0,in=160] (1.5,1.5)
to[out=0,in=180] (4,1.5) to[out=220,in=140] (4,.5)
to[out=180,in=0] (1.5,.5) to[out=190,in=90] (1,0)
to[out=270,in=170] (1.5,-.5) to[out=0,in=180] (4,-.5)
to[out=220,in=140] (4,-1.5) to[out=180,in=0] (1.5,-1.5)
to[out=200,in=0] (0,-2) to[out=180,in=0] (-1,-2)
to[out=90,in=0] (-1.5,-1.5) to[out=0,in=270] (-1,-1.25)
to[out=90,in=0] (-1.5,-1) to[out=45,in=335] (-1.5,-.5)
to[out=0,in=270] (-1,0);


\draw[fill=white, very thick] (1.5,1) ellipse (.4 and .2);
\draw[fill=white, very thick] (3,1) ellipse (.4 and .2); 
\draw[fill=white, very thick] (1.5,-1) ellipse (.4 and .2);
\draw[fill=white, very thick] (3,-1) ellipse (.4 and .2); 
 
 \draw (-1,0) to[out=10,in=170] (-.5,0);
 \draw (-1,1.25) to[out=330,in=170] (-.25,.25);
 \draw (0,.5) to[out=70,in=290] (0,2);
 \draw (-1,-1.25) to[out=30,in=210] (-.25,-.25);
 \draw (0,-.5) to[out=290,in=70] (0,-2); 
 \draw (.25,.25) to[out=45,in=180] (1.1,1);
 \draw (.25,-.25) to[out=45,in=180] (1.1,-1);
 \foreach \x in {0,1.5}
 \foreach \y in {0,.7}
 \draw (1.5+\x,1.2-\y) to[out=45,in=325] (1.5+\x,1.5-\y);
 \foreach \x in {0,1.5}
\foreach \y in {0,.7}
\draw (1.5+\x,-1.2+\y) to[out=45,in=325] (1.5+\x,-1.5+\y);
\draw (0.5,0) to (.95,0);
 
 \shadedraw[left color=white!35!black, right color=white!90!black, very thick]  (4,1.5) to[out=220,in=140] (4,.5) to[out=50,in=320] (4,1.5);

 \shadedraw[left color=white!35!black, right color=white!90!black, very thick]  (4,1.5-2) to[out=220,in=140] (4,.5-2) to[out=50,in=320] (4,1.5-2); 
 
  \shadedraw[right color=white!35!black, left color=white!90!black, very thick] (-1.5,.5) to[out=45,in=335] (-1.5,1) to[out=205,in=135] (-1.5,.5);
 
  \shadedraw[right color=white!35!black, left color=white!90!black, very thick] (-1.5,.5-1.5) to[out=45,in=335] (-1.5,1-1.5) to[out=205,in=135] (-1.5,.5-1.5); 
 
\shadedraw[right color=white!35!black, left color=white!90!black, very thick] (-1.5,1.5)
to[out=0,in=270] (-1,2) to[out=220,in=90] (-1.5,1.5) ;

\shadedraw[right color=white!35!black, left color=white!90!black, very thick] (-1,-2)
to[out=90,in=0] (-1.5,-1.5) to[out=270,in=130] (-1,-2) ;
 
\draw [fill=white,very thick] (0,0) circle (0.5);
 
 \draw (-.5,1) node{$\beta$};
 \draw (.33,1.5) node{$\alpha$};
  \foreach \x in {.2,.5,.8}
 \filldraw[fill=black] (4.3+\x,1) circle (0.05);
  \foreach \x in {.2,.5,.8}
 \filldraw[fill=black] (4.3+\x,1-2) circle (0.05);
 
\end{tikzpicture}
\caption{Example of a convenient pants decomposition.}
\label{convpantsdec}
\end{figure}
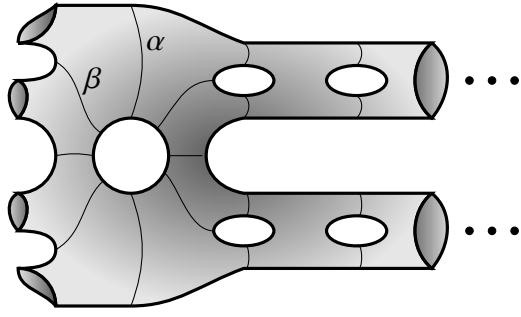

\textbf{Proof of theorem \ref{S1homeoS2}}. Let $P$ be a pants decomposition of $S_{1}$. 
From the fact that (\ref{E:inducediso}) is an isomorphism and theorem \ref{GraphEndsSurfaceEnds} we have $\Ends{S_{1}} \cong \Ends{\adj{P}} \cong \Ends{\adj{\phi(P)}} \cong \Ends{S_{2}}$. From the surface classification theorem for infinite surfaces by Richards, Prishlyak and Mischenko in \cite{Ray} and \cite{PM}, it is sufficient to prove that $S_{1}$ and $S_{2}$ have the same number of boundary components to guarantee that they are homeomorphic. But this is guaranteed by lemma \ref{peripheralpairs}.\qed 
\begin{remark}
 Theorem  \ref{S1homeoS2} cannot be extended for infinite genus surfaces with punctures. Indeed, let $S$ be an infinite genus surface with $n>0$ boundary components and without planar ends. Let $S'$ be the infinite genus surface obtained from $S$ by glueing one punctured disc to $S$ along a boundary component. Clearly $S$ and $S'$ are not homeomorphic, but $\comp{S}\cong\comp{S'}$.
\end{remark}


\textbf{Proof of corollary \ref{CorIsoNSC}}. The statement is immediate for all arguments given in the proof of theorem 2 remain valid  if we change $\comp{S}$ for $\nons{S}$ and take all pants decompositions to be formed just by nonseparating curves. \qed
\subsection{Surjectivity.}
	\label{NaturalMap}
	At the end of this section we give a proof for theorem \ref{TeoISO}. 
We begin by proving the following theorems: 
\begin{Teo}
		\label{Teo6}
Let $S$ be an infinite genus surface. Then $\Aut{\gs{S}} \cong \Aut{\nons{S}}$. 
\end{Teo}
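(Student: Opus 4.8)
The plan is to exploit that $\gs{S}$ and $\nons{S}$ are defined on one and the same vertex set $\mathcal{V}(\nons{S})$, so that both $\Aut{\gs{S}}$ and $\Aut{\nons{S}}$ sit inside the symmetric group $\mathrm{Sym}(\mathcal{V}(\nons{S}))$: the former consists of the bijections preserving the relation ``$i(\alpha,\beta)=1$'', while the latter, identified with $\Aut{\mathcal{N}^{1}(S)}$ since a simplicial automorphism is determined by its vertex map exactly as for $\comp{S}$, consists of those preserving ``$i(\alpha,\beta)=0$''. Thus it suffices to prove that these two subgroups coincide, in which case the isomorphism $\Aut{\gs{S}}\cong\Aut{\nons{S}}$ is realized by the identity on vertices. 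For this I would establish that each of the two edge relations is definable from the other by a condition referring only to disjointness and intersection-once among nonseparating curves, hence automatically preserved by any automorphism of the other graph.

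Concretely, the first key lemma I would prove is a purely combinatorial description of ``$i(\alpha,\beta)=1$'' in terms of the disjointness complex. Its topological content is that $i(\alpha,\beta)=1$ holds precisely when $\alpha\cup\beta$ fills a one-holed torus $T$ whose boundary is an essential (genus-one) separating curve; this configuration is detected by an auxiliary pattern of nonseparating curves visible through the $\mathcal{N}^{1}(S)$-adjacencies, for instance a third curve in $T$ completing a one-holed-torus triangle together with the behaviour of the common disjoint neighbours, all of which lie in the infinite-genus complement $S\setminus T$. Here I would reuse the existence statements from the proof of Proposition \ref{T:HypCC} to produce the required auxiliary curves in the complementary subsurface, and the pair-of-pants and peripheral-pair analysis of Lemmas \ref{pairofpants} and \ref{peripheralpairs} to control the local combinatorics. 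Granting this, every $\phi\in\Aut{\nons{S}}$ preserves the displayed disjointness pattern and therefore preserves ``$i=1$'', giving $\Aut{\nons{S}}\subseteq\Aut{\gs{S}}$.

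The second key lemma is the converse description of disjointness ``$i(\alpha,\beta)=0$'' purely in terms of the relation ``$i=1$'', i.e. inside $\gs{S}$. Using that $\gs{S}$ has diameter $4$ and that two disjoint nonseparating curves always admit a common $\gs{S}$-neighbour (again from the argument of Proposition \ref{T:HypCC}), disjoint pairs are exactly the $\gs{S}$-distance-two pairs satisfying an additional local condition excluding the higher-intersection distance-two pairs. Proving this yields $\Aut{\gs{S}}\subseteq\Aut{\nons{S}}$, and combined with the first inclusion gives equality of the two subgroups, hence the theorem. The main obstacle in both lemmas is the same and is the genuinely delicate point: separating, by disjointness data alone, the case $i(\alpha,\beta)=1$ from the cases $i(\alpha,\beta)\ge 2$, since in $\gs{S}$ both a disjoint pair and a pair meeting twice can lie at distance two and share common neighbours. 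I expect to resolve this by pinning down the one-holed-torus configuration through the precise structure of the set of common disjoint neighbours, which is the vertex set of the nonseparating complex of a single infinite-genus complementary piece when $i=1$ but fails to have this form when $i\ge 2$, so that the two intersection patterns become combinatorially distinguishable.
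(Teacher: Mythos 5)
Your overall skeleton coincides with the paper's: since $\mathcal{V}(\gs{S})=\mathcal{V}(\nons{S})$ and simplicial automorphisms are determined by their vertex maps, it suffices to show that every automorphism of $\nons{S}$ preserves the relation $i(\cdot,\cdot)=1$ and every automorphism of $\gs{S}$ preserves the relation $i(\cdot,\cdot)=0$. The paper does exactly this, but it obtains the two preservation statements by invoking adaptations of known results --- Irmak's characterization of intersection number one for the first direction \cite{Irmak}, and Schmutz's lemma that isomorphisms of $\gs{S}$ preserve disjointness (Lemma \ref{Int0}, from \cite{Sch}) for the second --- rather than reproving them. You instead commit to proving these characterizations, and this is where the proposal breaks down.

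The concrete gap is in your first key lemma. The claimed equivalence ``$i(\alpha,\beta)=1$ precisely when $\alpha\cup\beta$ fills a one-holed torus $T$ with $\partial T$ essential and separating'' is false: only necessity holds. Take $\gamma$ with $i(\alpha,\gamma)=1$, let $T$ be a regular neighbourhood of $\alpha\cup\gamma$, and set $\beta=T_{\gamma}^{2}(\alpha)$, the image of $\alpha$ under the square of the Dehn twist about $\gamma$. Then $i(\alpha,\beta)=2\,i(\alpha,\gamma)^{2}=2$, yet $\alpha\cup\beta$ fills the same one-holed torus $T$. Worse, your proposed repair --- distinguishing $i=1$ from $i\geq 2$ by the structure of the set of common disjoint neighbours --- cannot work even in principle: for \emph{any} pair of curves filling $T$, regardless of intersection number, the nonseparating curves disjoint from both are exactly the nonseparating curves of $S$ contained in $S\setminus T$, because every other disjoint curve is either inessential or parallel to $\partial T$, which is separating. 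Hence the pairs $\{\alpha,\gamma\}$ (with $i=1$) and $\{\alpha,\beta\}$ (with $i=2$) have \emph{identical} common-disjoint-neighbour sets, so no property of that set, however refined, can separate the two cases; any correct characterization must use auxiliary curves that actually intersect $\alpha$ or $\beta$, which is what Irmak's configurations do. Your second lemma has the same defect in milder form: disjoint nonseparating curves are indeed at distance two in $\gs{S}$, but the ``additional local condition'' excluding distance-two pairs with $i\geq 2$ is precisely the content of Schmutz's lemma and is never specified, so in that direction the proposal assumes exactly what has to be proved.
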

\begin{Teo}\label{surGS}
Let $S$ be an infinite genus surface such that
$\Ends{S} = \Endsg{S}$. The natural map: 
\begin{equation}
\Psi_{\gs{S}}:{\rm MCG}^{*}(S)\to {\rm Aut}(\gs{S})
\end{equation}
is surjective. 
\end{Teo}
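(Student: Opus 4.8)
The plan is to reduce the statement to the nonseparating curve complex and then to realize the resulting automorphism geometrically by an exhaustion argument, using the finite-type results on each compact piece and gluing them together with the Alexander machinery of \S\ref{S:INYECT}. Let $\psi\in\Aut{\gs{S}}$. By Theorem \ref{Teo6} the group $\Aut{\gs{S}}$ is naturally isomorphic to $\Aut{\nons{S}}$, and this isomorphism is compatible with the action of $\MCG{S}$; hence it suffices to produce a homeomorphism $h$ of $S$ realizing the associated $\bar{\psi}\in\Aut{\nons{S}}$ on (isotopy classes of) nonseparating curves. Since $\bar{\psi}$ is the vertex map of a simplicial automorphism of $\nons{S}$, it preserves disjointness; arguing as in Lemmas \ref{pairofpants} and \ref{peripheralpairs}, which (as noted in the discussion preceding Corollary \ref{CorIsoNSC}) hold verbatim for nonseparating curves, one checks that $\bar{\psi}$ also preserves the relations ``bounding a pair of pants'' and ``forming a peripheral pair,'' carries pants decompositions to pants decompositions, and respects adjacency in their adjacency graphs.

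First I would record the structural consequences at infinity. Because $\bar{\psi}$ sends a pants decomposition $P$ to a pants decomposition $\bar{\psi}(P)$ and induces an isomorphism $\adj{P}\to\adj{\bar{\psi}(P)}$ exactly as in (\ref{E:inducediso}), Theorem \ref{GraphEndsSurfaceEnds} yields an induced homeomorphism $\Ends{S}\to\Ends{S}$. This is the crucial point that prevents $\bar{\psi}$ from scrambling the ends: it guarantees that an exhaustion of $S$ is carried to a cofinal family of compact subsurfaces, so that the local homeomorphisms constructed below can be arranged to be proper in the limit. At this stage I would also extend $\bar{\psi}$ to an automorphism of $\comp{S}$, defining it on a separating curve as the boundary of a regular neighborhood of a suitable finite union of nonseparating curves filling one of its sides; the pair-of-pants combinatorics above show that this is well defined and simplicial, which lets the compact curve-complex result of Ivanov \cite{Ivanov} be brought to bear.

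Next I would run the exhaustion. Fix an exhaustion $\{K_i\}$ of $S$ by compact subsurfaces with the properties used in the proof of Theorem \ref{injCS} (increasing genus, and boundary components that are essential separating curves of $S$). For each $i$ the extended automorphism restricts to an isomorphism between the curve complex of curves contained in $K_i$ and that of curves contained in a compact subsurface $K_i'\hookrightarrow S$; by the finite-type results of Ivanov, Irmak and Schmutz (\cite{Ivanov}, \cite{Irmak}, \cite{Sch}) this isomorphism is induced by a homeomorphism $h_i\colon K_i\to K_i'$, pinned down up to isotopy by the Alexander method. The end-correspondence of the previous paragraph ensures that the $K_i'$ may be taken to exhaust $S$ as well.

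Finally I would glue. Using Lemma \ref{homotopycurves} together with the coherence technique of Lemma \ref{L:HOM}, I would choose representatives of the $h_i$ compatibly so that $h_{i+1}|_{K_i}$ is isotopic to $h_i$; passing to the limit then produces a single homeomorphism $h\colon S\to S$ with $h_{*}=\bar{\psi}$ on $\nons{S}$, whence $\Psi_{\gs{S}}([h])=\psi$. The hard part will be precisely this gluing step: one must verify that the locally defined $h_i$ assemble into a globally defined, genuine (hence proper) homeomorphism rather than a mere continuous map, losing neither genus nor ends at infinity, and one must reconcile the notion of ``(non)separating in $K_i$'' with ``(non)separating in $S$'' — which is why the boundary components of the $K_i$ are required to be separating curves of $S$. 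Controlling the behavior at infinity is exactly what Theorem \ref{GraphEndsSurfaceEnds} supplies, while the simultaneous-isotopy statement of Lemma \ref{homotopycurves} is what makes the family $\{h_i\}$ coherent; together they reduce the infinite-type surjectivity to the finite-type case applied piece by piece.
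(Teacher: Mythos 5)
Your strategy --- exhaust $S$ by compact pieces, invoke finite-type rigidity on each piece, and glue --- is genuinely different from the paper's proof, which never passes through the finite-type theorems at all: the paper constructs a homeomorphism $h$ agreeing with $\phi$ on a Dehn--Thurston coordinate system of nonseparating curves (Lemma \ref{homeopantsdec} for the pants curves, then the two parts of Lemma \ref{Sch04} for the curves $\beta_{i}$ and $\gamma_{i}$), and then concludes $h_{*}=\phi$ on \emph{all} of $\gs{S}$ from Lemma \ref{Intn}, the preservation of geometric intersection numbers, which is proved by induction using the triples of Lemmas \ref{Schmutz3} and \ref{Schmutz3prime}. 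Your route could in principle work, but as written it has two genuine gaps, both traceable to the fact that you never establish anything playing the role of Lemma \ref{Intn}.

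First, the pivotal claim that ``the extended automorphism restricts to an isomorphism between the curve complex of curves contained in $K_{i}$ and that of curves contained in a compact subsurface $K_{i}'$'' is asserted, not proved, and it does not follow from the tools you cite: preservation of disjointness, of pairs of pants, of peripheral pairs, and the end-space correspondence of Theorem \ref{GraphEndsSurfaceEnds} do not by themselves show that the curves lying in a fixed compact subsurface are sent to curves lying in some common compact subsurface --- a priori the images could fail to fit inside any compact piece. Some argument (for instance via the join decomposition of the link of the separating multicurve $\partial_{i}K_{i}$ and finite-dimensionality of the relevant subcomplexes) must be supplied; this is the heart of any exhaustion approach. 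Second, your extension of $\bar{\psi}$ to separating curves, defined as the boundary of a regular neighborhood of a filling collection of nonseparating curves, is not shown to be well defined: to know that the image of a filling collection again fills a homeomorphic subsurface with a corresponding boundary curve, you need control of all geometric intersection numbers (or else the restriction claim above, so the two gaps feed into each other). This is not a pedantic point: Section \ref{S:CFE} of the paper exhibits superinjective self-maps of $\comp{S}$ --- maps preserving disjointness and non-disjointness --- that are not geometric when $S$ has infinite genus, so in this setting one genuinely needs a quantitative input such as Lemma \ref{Intn} to pin curves down, and your proposal never develops it.
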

This two results imply that the natural map:
\begin{equation}
\Psi_{\nons{S}}:{\rm MCG}^{*}(S)\to {\rm Aut}(\mathcal{N}(S))
\end{equation}
is surjective. Using the surjectivity of this map, we can deduce the following:
\begin{Teo}\label{surCS}
Let $S$ be an infinite genus surface such that
$\Ends{S} = \Endsg{S}$. The natural map: 
\begin{equation}
\Psi_{\comp{S}}:{\rm MCG}^{*}(S)\to {\rm Aut}(\comp{S})
\end{equation}
is surjective.
\end{Teo}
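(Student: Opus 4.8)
The plan is to deduce the surjectivity of $\Psi_{\comp{S}}$ from the surjectivity of $\Psi_{\nons{S}}$, which is already in hand: Theorem \ref{Teo6} gives $\Aut{\gs{S}}\cong\Aut{\nons{S}}$ and Theorem \ref{surGS} gives surjectivity of $\Psi_{\gs{S}}$, so $\Psi_{\nons{S}}$ is surjective. The guiding principle is that an automorphism of $\comp{S}$ is already pinned down by what it does to the nonseparating curves, so that realizing its nonseparating part by a mapping class forces it to be realized entirely.

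Concretely, I would take an arbitrary $\phi\in\Aut{\comp{S}}$. As recorded in the Rigidity subsection (Section \ref{SS:rigidity}), applied to the automorphism $\phi$, i.e.\ to the case $S_1=S_2=S$, the map $\phi$ carries nonseparating curves to nonseparating curves; since automorphisms of $\comp{S}$ send simplices to simplices and hence preserve disjointness, $\phi$ restricts to an automorphism $\phi|_{\nons{S}}\in\Aut{\nons{S}}$ (the same applies to $\phi^{-1}$, so the restriction is genuinely an automorphism of the full subcomplex on nonseparating vertices). By surjectivity of $\Psi_{\nons{S}}$ there is a class $[h]\in\MCG{S}$ with $h_{*}=\phi$ on $\mathcal{V}(\nons{S})$. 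I then set $\psi:=(h_{*})^{-1}\circ\phi\in\Aut{\comp{S}}$. By construction $\psi$ fixes every nonseparating curve, and it suffices to prove that $\psi$ fixes every separating curve as well; this yields $\psi=\mathrm{Id}$, hence $\phi=h_{*}$ lies in the image of $\Psi_{\comp{S}}$.

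The crux is therefore a reconstruction statement: a curve $\gamma$ is uniquely determined by the set $N(\gamma)$ of nonseparating curves disjoint from it. Granting this, let $\gamma$ be a separating curve; since $\psi$ fixes each nonseparating $\alpha$ and preserves disjointness, $\alpha$ is disjoint from $\gamma$ if and only if $\alpha=\psi(\alpha)$ is disjoint from $\psi(\gamma)$, so $N(\psi(\gamma))=N(\gamma)$ and the reconstruction statement gives $\psi(\gamma)=\gamma$. To prove the reconstruction statement I would argue by contradiction: for distinct curves $\gamma\neq\gamma'$ I would exhibit a nonseparating curve lying in $N(\gamma)$ but not in $N(\gamma')$, or conversely. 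When $i(\gamma,\gamma')>0$ one places a suitable nonseparating curve in a handle on one side of $\gamma$, disjoint from $\gamma$ yet forced to meet an arc of $\gamma'$; when $\gamma$ and $\gamma'$ are disjoint and non-isotopic one works inside the component of $S\setminus\gamma'$ containing $\gamma$ and uses that $\gamma$ is essential, hence not boundary-parallel there, to locate a nonseparating curve of that component meeting $\gamma$. This is precisely where the hypothesis $\Ends{S}=\Endsg{S}$ is used: because every end carries genus, every essential subsurface is rich enough in handles to supply the required distinguishing nonseparating curves. I expect the main obstacle to be making this case analysis uniform, in particular the degenerate \emph{outer separating} configurations, where one side is a pair of pants and there is little room to maneuver; there I would replace the disjointness argument by the pair-of-pants rigidity of Lemma \ref{pairofpants} and the peripheral-pair rigidity of Lemma \ref{peripheralpairs}, which together show that an outer separating curve is determined by the nonseparating curve(s) completing its pair of pants.
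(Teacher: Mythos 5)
Your overall reduction is exactly the paper's: restrict $\phi$ to $\nons{S}$ (using that automorphisms of $\comp{S}$ preserve the nonseparating type), realize that restriction by some $h\in\MCG{S}$ via Theorems \ref{Teo6} and \ref{surGS}, and then show that an automorphism of $\comp{S}$ agreeing with $h_{*}$ on every nonseparating curve also agrees with it on every separating curve. Your reconstruction statement --- that a curve $\gamma$ is determined by the set $N(\gamma)$ of nonseparating curves disjoint from it --- is a clean reformulation of what remains, and it is true under the standing hypotheses; the paper proves it, in effect, by trapping $\phi(\alpha)$ and $h(\alpha)$ inside a common annulus or pair of pants cut out by finitely many nonseparating (and outer) curves, in its three cases.

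The gap is in your proof of the reconstruction statement, and it sits exactly where the paper does most of its work. In the disjoint case you propose to find a nonseparating curve \emph{of the component $R$ of $S\setminus\gamma'$ containing $\gamma$} that meets $\gamma$. When $R$ is planar this is impossible: under $\Ends{S}=\Endsg{S}$ a planar complementary component is a compact genus-zero surface attached to the rest of $S$ along the single curve $\gamma'$, so \emph{every} simple closed curve contained in $R$ is separating in $S$; $R$ contains no nonseparating curve at all. This configuration --- $\gamma$ nested inside the planar side of $\gamma'$, which occurs as soon as $S$ has at least three boundary components --- is precisely what the paper's cases 2 and 3 are built to handle. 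There one must distinguish the curves the other way around: construct a nonseparating curve disjoint from $\gamma$ that essentially crosses $\gamma'$, by running an essential arc through the planar region between them that encircles a boundary component of $S$, then closing it up through a handle on the infinite-genus side so that it crosses a handle curve once (hence is nonseparating), and checking essentiality of the intersection with the bigon criterion --- parity arguments are useless against separating curves. Your fallback does not supply this: Lemmas \ref{pairofpants} and \ref{peripheralpairs} say that pairs of pants and peripheral pairs are preserved, but a curve is \emph{not} determined by bounding a pair of pants with two given nonseparating curves (there is one such curve for each isotopy class of arc joining them in the cut surface), and a genuine outer curve bounds its defining pair of pants with two boundary components of $S$, so there are no ``nonseparating curves completing its pair of pants'' in the first place. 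The intersecting case of your sketch has only a fixable minimal-position issue, but the planar-side disjoint case is a missing idea, not a missing detail.
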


\subsubsection{Proof of theorems \ref{Teo6} and  \ref{surGS}.}
The proofs of theorems \ref{Teo6} and  \ref{surGS} require some auxiliary lemmas given in \cite{Irmak} and \cite{Sch} but adapted to the context of infinite type surfaces. When the proofs of these lemmas can be easily deduced from the cited works we just state them without a proof. When this is not the case elementary and simple proofs are provided. We recall first the different components that a curve might have. 
\begin{Def}[\sc Curve components]
 Let $\alpha$ and $\beta$ be nonseparating curves such that $i(\alpha,\beta) \geq 2$. Let $\beta_{1}$ be a connected component of $\beta$ in $S_{\alpha}$. If the surface resulting from cutting $S_{\alpha}$ along $\beta_{1}$ is connected, then $\beta_{1}$ is called a nonseparating component of $\beta$ (with respect to $\alpha$). Otherwise, $\beta_{1}$ is called a separating component of $\beta$ (with respect ot $\alpha$). If $\beta_{1}$ connects the two different boundary components of $S_{\alpha}$ induced by $\alpha$, then $\beta_{1}$ is called a two-sided component. Otherwise it is called one-sided.
\end{Def}

\begin{Lema}\label{Schmutz3}\cite{Sch}
 Let $S$ be an infinite genus surface and $\alpha, \beta \in \mathcal{V}(\nons{S})$ such that $i(\alpha,\beta) \geq 2$. If $\beta$ has a nonseparating component $\beta_{1}$ with respect to $\alpha$, then there exists $\gamma, \gamma^{\prime} \in \mathcal{V}(\nons{S})\backslash\{\alpha,\beta\}$ such that $N(\alpha,\beta) \subset (N(\gamma) \cup N(\gamma^{\prime}))$. Moreover, if $\beta_{1}$ is one-sided, then $\alpha, \gamma, \gamma^{\prime}$ are mutually disjoint; if $\beta_{1}$ is two-sided, then $\{\alpha,\gamma, \gamma^{\prime}\}$ is a triple with $$i(\alpha,\beta) = i(\beta,\gamma) + i(\beta,\gamma^{\prime})$$ and $\min\{i(\beta,\gamma), i(\beta,\gamma^{\prime})\} > 0$.
\end{Lema}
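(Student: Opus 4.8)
The plan is to construct $\gamma$ and $\gamma'$ by a surgery of $\alpha$ along the nonseparating arc $\beta_1$, and then to establish the neighbourhood containment by a direct case analysis on where a common neighbour meets $\alpha$ and $\beta$. First I would cut $S$ along $\alpha$, so that $\beta_1$ becomes an embedded arc in $S_\alpha$ with endpoints $p,q$ on the two boundary circles $\alpha^+,\alpha^-$ produced by $\alpha$. The points $p,q$ divide $\alpha$ into two subarcs $a_1,a_2$, and I would take $\gamma$ and $\gamma'$ to be the simple closed curves obtained from $\beta_1\cup a_1$ and $\beta_1\cup a_2$ after smoothing the two corners at $p,q$; to control intersections later I would push the $a_i$-part into a thin collar of $\alpha$ and the $\beta_1$-part into a thin collar of $\beta$. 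Equivalently, a regular neighbourhood $F$ of $\alpha\cup\beta_1$ has Euler characteristic $-1$, and counting its boundary components shows $F$ is a pair of pants when $\beta_1$ is one-sided (both endpoints on the same copy of $\alpha$) and a one-holed torus when $\beta_1$ is two-sided. In the one-sided case $\gamma,\gamma'$ are the two boundary components of $F$ other than the copy of $\alpha$; in the two-sided case they are the two curves $\beta_1\cup a_i$, each meeting $\alpha$. This model surface already records the dichotomy in the statement.

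Before using them I would verify that $\gamma,\gamma'\in\mathcal{V}(\nons{S})\setminus\{\alpha,\beta\}$. Distinctness from $\alpha$ and $\beta$ is clear from the construction. For nonseparation I would start from the mod-two computation $[\gamma]+[\gamma']=[a_1]+[a_2]=[\alpha]\neq 0$ in $H_1(S;\Z/2\Z)$, which already shows at least one of them is nonseparating; to upgrade this to \emph{both} being nonseparating I would use the hypothesis that $\beta_1$ is a nonseparating component, translating the connectedness of $S_\alpha$ cut along $\beta_1$ into the connectedness of $S\setminus\gamma$ and of $S\setminus\gamma'$. This is the step where the nonseparating hypothesis on $\beta_1$ is genuinely needed.

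The core of the argument is the containment $N(\alpha,\beta)\subseteq N(\gamma)\cup N(\gamma')$, where $N(\alpha,\beta)=N(\alpha)\cap N(\beta)$. Let $\delta$ satisfy $i(\delta,\alpha)=i(\delta,\beta)=1$, drawn in minimal position, so $\delta$ meets $\alpha$ in one point $u$ and $\beta$ in one point $w$, with $u,w\notin\{p,q\}$. Because the $a_i$-part of each $\gamma$ lies in a thin collar of $\alpha$ met by $\delta$ only near $u$, and the $\beta_1$-part lies in a thin collar of $\beta$ met only near $w$, the drawn intersection of $\delta$ with $\gamma=\beta_1\cup a_1$ consists of the point $u$ exactly when $u\in a_1$ together with the point $w$ exactly when $w\in\beta_1$; similarly for $\gamma'=\beta_1\cup a_2$ with $a_2$. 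Running the four cases according to $u\in a_1$ or $u\in a_2$ and $w\in\beta_1$ or $w\in\beta\setminus\beta_1$, in every case $\delta$ meets exactly one of $\gamma,\gamma'$ in a single transverse point. Since a lone transverse intersection point is nonzero mod two and hence cannot be removed by isotopy, that single point realises the geometric intersection number, giving $i(\delta,\gamma)=1$ or $i(\delta,\gamma')=1$, i.e. $\delta\in N(\gamma)\cup N(\gamma')$.

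Finally I would read off the remaining conclusions from the model surface $F$. When $\beta_1$ is one-sided, $\alpha,\gamma,\gamma'$ are the three boundary curves of the pair of pants $F$ and are therefore pairwise disjoint. When $\beta_1$ is two-sided, both $\gamma$ and $\gamma'$ cross $\alpha$, and I would compute $i(\beta,\gamma)$ and $i(\beta,\gamma')$ by distributing the $i(\alpha,\beta)$ intersection points of $\beta$ with $\alpha$ between $a_1$ and $a_2$ and then adding the contributions of the two endpoints $p,q$, obtaining $i(\alpha,\beta)=i(\beta,\gamma)+i(\beta,\gamma')$ with each summand positive. I expect the main obstacle to be exactly this last bookkeeping: one must smooth the corners at $p$ and $q$ in the orientation-correct way, check that no bigons are created so that the drawn intersections are genuinely minimal, and verify that in the two-sided case each endpoint contributes precisely one intersection to precisely one of $\gamma,\gamma'$. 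Controlling minimal position across the surgery corners, rather than the homological or combinatorial counting, is the delicate part of the proof.
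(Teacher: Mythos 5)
Your construction is sound, but note that the paper never proves Lemma \ref{Schmutz3} at all: it is quoted from \cite{Sch}, and the only argument of this kind written out in the paper is the proof of the companion Lemma \ref{Schmutz3prime}, which handles the complementary situation (all components of $\beta$ one-sided) and therefore cannot use your surgery — there $\gamma$ is chosen by hand to meet $\alpha$ once while following components of $\beta$, and $\gamma'$ is taken to be the Dehn twist $T_{\alpha}(\gamma)$. Your route — resolving $\alpha$ along $\beta_{1}$ into $\gamma=\beta_{1}\cup a_{1}$ and $\gamma'=\beta_{1}\cup a_{2}$, reading the one-sided/two-sided dichotomy off the regular neighbourhood $F$ of $\alpha\cup\beta_{1}$ (pair of pants versus one-holed torus), and the four-case parity count for a common neighbour $\delta$ — is exactly Schmutz's original argument, and every step you outline does go through; in particular your four cases correctly show that exactly one of $\gamma,\gamma'$ picks up a single transverse point of $\delta$, which the mod-two count then certifies as essential. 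Note also that in the two-sided case your connectivity upgrade is not needed: $i(\gamma,\alpha)=i(\gamma',\alpha)=1$ already forces both classes to be homologically nontrivial, hence nonseparating; the hypothesis that $\beta_{1}$ is a nonseparating arc is genuinely used only in the one-sided case, where it gives connectedness of $\overline{S\setminus F}$ (this is $S_{\alpha}$ cut along $\beta_{1}$, up to collars), and it also gives distinctness from $\alpha$ for free, since $\gamma$ homologous to $\alpha$ would force $[\gamma']=[\gamma]+[\alpha]=0$ mod two, contradicting nonseparation.

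The one place where your plan can be simplified is precisely the step you flag as delicate. You do not need to control minimal position across the surgery corners at all: the drawn picture is only needed for the \emph{upper} bounds $i(\beta,\gamma)\leq m_{1}+1$ and $i(\beta,\gamma')\leq m_{2}+1$, where $m_{i}$ is the number of points of $\alpha\cap\beta$ interior to $a_{i}$ (so $m_{1}+m_{2}+2=i(\alpha,\beta)$), since drawn intersections always dominate geometric ones. The reverse inequality $i(\beta,\gamma)+i(\beta,\gamma')\geq i(\alpha,\beta)$ follows from the triple configuration by the very argument the paper uses in its proof of Lemma \ref{Intn}: in the one-holed torus containing $\{\alpha,\gamma,\gamma'\}$, each time $\beta$ crosses $\alpha$ it must cross $\gamma$ or $\gamma'$. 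Combining the two, both upper bounds are forced to be equalities, which yields the identity $i(\alpha,\beta)=i(\beta,\gamma)+i(\beta,\gamma')$ \emph{and} the positivity $\min\{i(\beta,\gamma),i(\beta,\gamma')\}\geq 1$ simultaneously, with no bigon analysis whatsoever.
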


\begin{Lema}\label{Int0}\emph{[\emph{Ibid.}]}
 Let $S_{1}$ and $S_{2}$ be infinite genus surfaces and let $\fun{\phi}{\gs{S_{1}}}{\gs{S_{2}}}$ be an isomorphism. Then for any disjoint curves $\alpha$ and $\beta$, their images under $\phi$ will also be disjoint.
\end{Lema}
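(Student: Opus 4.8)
The plan is to reduce the statement to a purely graph-theoretic criterion for disjointness and then observe that $\phi$, being an isomorphism of the Schmutz graph, preserves every such criterion. Since $\phi$ preserves adjacency and non-adjacency it automatically sends pairs with $i(\cdot,\cdot)=1$ to pairs with $i(\cdot,\cdot)=1$, and it satisfies $\phi(N(\alpha))=N(\phi(\alpha))$ for every vertex $\alpha$. Hence the only thing left to check is that, among \emph{non-adjacent} pairs, $\phi$ cannot interchange the two possibilities $i(\alpha,\beta)=0$ and $i(\alpha,\beta)\geq 2$. I would isolate disjointness by the following criterion: for distinct non-adjacent vertices $\alpha,\beta$, one has $i(\alpha,\beta)=0$ if and only if the set $N(\alpha,\beta) = N(\alpha)\cap N(\beta)$ of common neighbours \emph{cannot} be covered as $N(\alpha)\cap N(\beta)\subseteq N(\gamma)\cup N(\gamma')$ by the neighbourhoods of any two vertices $\gamma,\gamma'$. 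Coverability is manifestly preserved by a graph isomorphism, so once the criterion is established the lemma follows by applying it to both $\phi$ and $\phi^{-1}$.

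First I would prove the \emph{coverable} direction: if $i(\alpha,\beta)\geq 2$ then $N(\alpha)\cap N(\beta)$ is covered by two neighbourhoods. When $\beta$ has a nonseparating component with respect to $\alpha$ this is exactly Lemma~\ref{Schmutz3}, which produces $\gamma,\gamma'$ with $N(\alpha,\beta)\subset N(\gamma)\cup N(\gamma')$. A two-sided arc of $\beta$ in $S_{\alpha}$ joins the two distinct boundary circles coming from $\alpha$, and an arc between distinct boundary components of a connected surface never disconnects it; hence every two-sided component is automatically a nonseparating component. The only remaining case is that all components of $\beta$ with respect to $\alpha$ are one-sided and separating, which is handled by the corresponding statement for separating components in \cite{Sch} and \cite{Irmak}. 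In every case we obtain the desired covering.

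The harder direction, which I expect to be the main obstacle, is to show that disjoint curves are \emph{un}coverable: if $i(\alpha,\beta)=0$, then for every pair of vertices $\gamma,\gamma'$ there is a common neighbour of $\alpha$ and $\beta$ lying outside $N(\gamma)\cup N(\gamma')$. Here I would exploit that $S$ has infinite genus with $\Ends{S}=\Endsg{S}$. Fixing disjoint representatives of $\alpha$ and $\beta$ together with representatives of $\gamma,\gamma'$, all of which meet only a compact part of $S$, I would build a curve $\delta$ crossing each of $\alpha$ and $\beta$ exactly once while routing through a genus-carrying region far enough from $\gamma\cup\gamma'$ that $\delta$ is disjoint from both, so in particular $i(\delta,\gamma)\neq 1$ and $i(\delta,\gamma')\neq 1$. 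The availability of such a region is guaranteed precisely because every end carries genus, so nonseparating curves exist arbitrarily far from any prescribed compact set. The technical care goes into arranging $\delta$ to hit $\alpha$ and $\beta$ once each \emph{simultaneously} while keeping its crossing points and its closing-up handle off $\gamma\cup\gamma'$, that is, producing an honest common neighbour rather than merely some disjoint nonseparating curve.

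Combining the two directions yields the stated criterion: for non-adjacent $\alpha,\beta$, disjointness is equivalent to uncoverability of $N(\alpha)\cap N(\beta)$ by two neighbourhoods. Since $\phi$ sends common neighbourhoods to common neighbourhoods and respects unions of neighbourhoods, it preserves coverability and therefore preserves the relation $i(\cdot,\cdot)=0$, which is exactly the assertion of the lemma.
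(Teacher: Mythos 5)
Your overall plan---trap disjointness in a purely graph-theoretic covering criterion and note that any isomorphism of $\gs{S_{1}}$ onto $\gs{S_{2}}$ preserves it---is precisely Schmutz's strategy, which is all the paper itself does: Lemma \ref{Int0} is stated with the citation [\emph{Ibid.}] and never reproved, the point being that Lemmas 3--5 of \cite{Sch} adapt to infinite genus. Your coverable direction is essentially fine: the observation that a two-sided component is automatically nonseparating (an arc joining two distinct boundary circles never disconnects) is correct, so the two cases really are Lemma \ref{Schmutz3} and the all-one-sided-separating case, for which the covering statement in \cite{Sch} applies.

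There are, however, two genuine gaps, both coming from conflating ``non-adjacent in $\gs{S}$'' ($i\neq 1$) with ``disjoint'' ($i=0$). First, your criterion is false as literally stated, because you quantify over \emph{all} pairs $\gamma,\gamma'$: taking $\gamma=\alpha$ and $\gamma'=\beta$ gives $N(\alpha)\cap N(\beta)\subseteq N(\gamma)\cup N(\gamma')$ trivially, for every pair $\alpha,\beta$, so your criterion would declare that no two curves are ever disjoint. The quantification must exclude $\gamma,\gamma'\in\{\alpha,\beta\}$; note that Lemma \ref{Schmutz3} explicitly produces $\gamma,\gamma'\in\mathcal{V}(\nons{S})\setminus\{\alpha,\beta\}$, and Schmutz's uncoverability statement for disjoint pairs (Lemma 5 of \cite{Sch}, the one invoked again in Corollary \ref{surNS}) is quantified only over such pairs. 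Second, and more seriously, even after this fix your proof of the uncoverability direction fails: you build a common neighbour $\delta$ of $\alpha$ and $\beta$ that is \emph{disjoint} from $\gamma\cup\gamma'$, but such a $\delta$ need not exist. Embed a two-holed torus $F$ in $S$ with connected complement, let $\gamma,\gamma'$ be its two boundary curves (each nonseparating, but $\gamma\cup\gamma'$ separates $F$ from $S\setminus \overline{F}$), take $\alpha$ nonseparating inside $F$ and $\beta$ nonseparating in $S\setminus\overline{F}$; then $i(\alpha,\beta)=0$, all four vertices are distinct, and every curve meeting both $\alpha$ and $\beta$ must cross $\gamma\cup\gamma'$ at least twice, so no common neighbour of $\alpha,\beta$ avoids $\gamma\cup\gamma'$. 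What the criterion actually requires is only the weaker conclusion $i(\delta,\gamma)\neq 1$ and $i(\delta,\gamma')\neq 1$---in the example above, a $\delta$ crossing $\gamma$ exactly twice and missing $\gamma'$---and producing such a $\delta$ needs a case analysis on how $\gamma,\gamma'$ sit relative to $\alpha,\beta$; this is exactly the content of Schmutz's Lemma 5. Your ``route through distant genus'' argument cannot detect this obstruction, since it only uses that genus exists far from a compact set (incidentally, you also invoke $\Ends{S}=\Endsg{S}$, which is not a hypothesis of Lemma \ref{Int0}), not the fact that $\gamma\cup\gamma'$ may separate $\alpha$ from $\beta$.
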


\textbf{Proof theorem \ref{Teo6}}.
 Let $\phi \in \Aut{\nons{S}}$. Since any automorphism of $\nons{S}$ (and $\gs{S}$ respectively) is uniquely determined by the function on its vertices and $\mathcal{V}(\nons{S}) = \mathcal{V}(\gs{S})$, then $\phi$ induces a bijection $\fun{\phi^{*}}{\gs{S}}{\gs{S}}$. From the work of Irmak \cite{Irmak} on the characterization of two curves that intersect once, one can deduce that if $S_{1}$ and $S_{2}$ are infinite genus surfaces, and $\fun{\phi_{1}}{\comp{S_{1}}}{\comp{S_{2}}}$ and $\fun{\phi_{2}}{\nons{S_{1}}}{\nons{S_{2}}}$ are isomorphisms, then for any curves $\alpha_{1}$ and $\alpha_{2}$ such that $i(\alpha_{1},\alpha_{2})=1$ we have that $i(\phi_{1}(\alpha_{1}),\phi_{1}(\alpha_{2})) = i(\phi_{2}(\alpha_{1}),\phi_{2}(\alpha_{2})) = 1$. This fact applied to $\phi$ and $\phi^{-1}$ implies that $\phi^{*}$ must preserve adjacency and non-adjacency. Hence we can define the function
\begin{equation}
    \label{E:fiso}
\fun{\Phi}{\Aut{\nons{S}}}{\Aut{\gs{S}}}
\end{equation} 
 as $\phi \mapsto \phi^{*}$. This function is clearly an injective group homomorphism.\\
 \indent In the same way, for any automorphism of $\gs{S}$ we can induce a bijection from $\nons{S}$ to itself, and due to lemma \ref{Int0} this bijection will become an automorphism of $\nons{S}$. Therefore $\Phi$ is an isomorphism.
\qed
\begin{remark}
	\label{R:changing}
From the proof of theorem  \ref{Teo6} and the proof of corollary \ref{CorIsoNSC} we conclude that the statements of lemmas \ref{pairofpants} and \ref{peripheralpairs} remain valid if we change $\comp{S}$ for $\gs{S}$.
\end{remark}
The following four lemmas are used in the proof of theorem  \ref{surGS}. Let us recall first the notion of triple of curves. 

\begin{Def}[\sc Triples of curves]
 Let $\alpha$, $\beta$ and $\gamma$ be nonseparating curves of $S$. We will say $\{\alpha,\beta,\gamma\}$ is a triple if $i(\alpha,\beta) = i(\alpha,\gamma) = i(\beta,\gamma) = 1$ and there exists a subsurface of $S$ which contains $\alpha$, $\beta$ and $\gamma$, and is homeomorphic to a torus with one boundary component.
\end{Def}
%

\begin{Lema}\label{Schmutz3prime}
 Let $S$ be infinite genus surface and $\alpha, \beta \in \mathcal{V}(\nons{S})$ be such that $i(\alpha,\beta) \geq 2$. If $\beta$ does not have two-sided components with respect to $\alpha$, then there exists $\gamma, \gamma^{\prime} \in \mathcal{V}(\nons{S})\backslash\{\alpha,\beta\}$ such that $\{\alpha,\gamma, \gamma^{\prime}\}$ is a triple with $$i(\alpha,\beta) = i(\beta,\gamma) + i(\beta,\gamma^{\prime})$$ and $\min\{i(\beta,\gamma), i(\beta,\gamma^{\prime})\} > 0$.
\end{Lema}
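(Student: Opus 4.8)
The plan is to reduce the additive decomposition of $i(\alpha,\beta)$ to a homological computation inside a one-holed torus carrying $\alpha$, and to build the two auxiliary curves by hand from the arcs of $\beta$. First I would isotope $\alpha$ and $\beta$ into minimal position and fix a compact subsurface $\Sigma\subset S$ containing $\alpha\cup\beta$ together with a curve dual to $\alpha$ in its interior; since any simple closed curve lying in the interior of $\Sigma$ whose complement in $\Sigma$ is connected also has connected complement in $S$, every curve I construct inside $\Sigma$ that is nonseparating there will be nonseparating, hence essential, in $S$. Cutting along $\alpha$ turns $S$ into the surface $S_{\alpha}$ with two boundary circles $\alpha^{+},\alpha^{-}$, and $\beta$ becomes a family of $n=i(\alpha,\beta)$ pairwise disjoint arcs; by hypothesis each arc is one-sided, i.e. has both endpoints on the same circle.

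I would then read off the homological content of the hypothesis. Orienting the two curves and following $\beta$, an arc between two consecutive crossings is one-sided exactly when those two crossings carry opposite signs, so the assumption that every arc is one-sided forces the signs to alternate around $\beta$; hence $n$ is even and the algebraic intersection number satisfies $\hat{i}(\alpha,\beta)=0$. Next I would choose a nonseparating curve $\gamma$ with $i(\alpha,\gamma)=1$, taken inside $\Sigma$ and placed so that a regular neighborhood $T$ of $\alpha\cup\gamma$ is a one-holed torus and so that $\gamma$ follows the one-sided arcs of $\beta$ in a controlled way. Letting $\gamma'$ be the simple closed curve in $T$ with $[\gamma']=[\gamma]-[\alpha]$, and using that geometric and algebraic intersection numbers agree up to sign for essential simple closed curves in a one-holed torus, I get $i(\alpha,\gamma')=|\hat{i}(\alpha,\gamma)|=1$ and $i(\gamma,\gamma')=|\hat{i}(\gamma,\gamma')|=|\hat{i}(\gamma,\alpha)|=1$, so $\{\alpha,\gamma,\gamma'\}$ is a triple with $T$ the required torus. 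The three curves lie in $\Sigma$, hence are nonseparating in $S$, and $\gamma,\gamma'$ represent primitive classes different from $[\alpha]$, so they are distinct from $\alpha$ and, once the intersection estimates below give positive intersection with $\beta$, also from $\beta$.

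It remains to verify $i(\alpha,\beta)=i(\beta,\gamma)+i(\beta,\gamma')$ with both summands positive. Homologically $[\gamma]-[\gamma']=[\alpha]$ yields $\hat{i}(\beta,\gamma)-\hat{i}(\beta,\gamma')=\hat{i}(\beta,\alpha)=0$, which is only a consistency check; the real point is that the construction assigns each of the $n$ crossings of $\beta$ with $\alpha$ to exactly one of $\gamma,\gamma'$, with both receiving at least one crossing because both circles $\alpha^{\pm}$ carry arcs of $\beta$. The main obstacle is to promote this count to an equality of \emph{geometric} intersection numbers, that is, to guarantee that after the construction $\beta$ bounds no bigon with $\gamma$ nor with $\gamma'$, so that no cancellation occurs. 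I expect to settle this by building $\gamma$ and $\gamma'$ out of bands running alongside the alternating one-sided arcs of $\beta$ and then invoking the bigon criterion to see that $\beta$ meets each of them efficiently; this bigon-free splitting, rather than the homological bookkeeping, is where the actual work of the lemma lies.
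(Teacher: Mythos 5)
Your framework is the same as the paper's: cut along $\alpha$, exploit the fact that every arc of $\beta$ in $S_{\alpha}$ is one-sided, choose a curve $\gamma$ dual to $\alpha$, let $\gamma'$ be the Dehn twist image of $\gamma$ along $\alpha$ inside a regular neighborhood of $\alpha\cup\gamma$ (your homological prescription $[\gamma']=[\gamma]-[\alpha]$ is exactly that), and split the crossings of $\beta$ with $\alpha$ between $\gamma$ and $\gamma'$. Your homological bookkeeping (alternating signs, $n$ even, algebraic intersection zero) is correct but is not needed anywhere. The problem is that the proposal stops precisely where the proof has to start: the placement of $\gamma$ is never specified beyond ``follows the one-sided arcs of $\beta$ in a controlled way,'' and the equality $i(\alpha,\beta)=i(\beta,\gamma)+i(\beta,\gamma')$ is deferred to a hoped-for bigon argument which you yourself identify as ``where the actual work of the lemma lies.'' Designating the key step as future work is a genuine gap, not a proof.

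What fills the gap in the paper is a concrete prescription using the hypothesis. Since all arcs are one-sided, they split into the family based at $\alpha_{1}$ and the family based at $\alpha_{2}$. The paper draws $\gamma$ in $S_{\alpha}$ so that it is \emph{disjoint from every arc based at} $\alpha_{1}$, runs alongside a conveniently chosen arc based at $\alpha_{2}$, and crosses $\alpha$ exactly once, the crossing point on $\alpha_{2}$ being chosen so that $0<i(\gamma,\beta)\leq\frac{1}{2}i(\alpha,\beta)$. With $\gamma'$ the twist image of $\gamma$ in the one-holed torus $N$, the standard diagram of $N$ shows that each crossing of $\beta$ with $\alpha$ forces a crossing with exactly one of $\gamma,\gamma'$, and the way $\gamma$ was threaded along the arcs of $\beta$ rules out extra crossings, giving $i(\beta,\gamma')=i(\alpha,\beta)-i(\beta,\gamma)$. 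Note also that your argument for positivity of both summands (``both circles carry arcs of $\beta$'') is not sufficient on its own; in the paper positivity of $i(\beta,\gamma')$ comes from the quantitative bound, since $i(\beta,\gamma')\geq\frac{1}{2}i(\alpha,\beta)>0$. So your plan is pointed in the right direction, but the explicit choice of $\gamma$ with its two properties (avoiding all $\alpha_{1}$-based arcs, intersecting $\beta$ at most half of $i(\alpha,\beta)$ times) is the missing content.
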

\begin{proof}
 Let $\alpha_{1}$ and $\alpha_{2}$ be the boundary components on $S_{\alpha}$ induced by $\alpha$. Since $\beta$ does not have two-sided components then it only has one-sided components and therefore we can choose a curve $\gamma$ that intersects $\alpha$ once, does not intersect any one sided component of $\beta$ based on $\alpha_{1}$ and intersects $\beta$ in such a way that $0 < i(\gamma,\beta) \leq \frac{1}{2}i(\alpha,\beta)$. This can be done by drawing $\gamma$ disjoint from every one-sided component of $\beta$ based on $\alpha_{1}$, we keep on going ``following'' a convenient one-sided component of $\beta$ based on $\alpha_{2}$ until before we reach $\alpha_{2}$, then we intersect $\alpha_{2}$ in the corresponding point seeking the desired inequality. See figure \ref{3primefig1} for examples.
 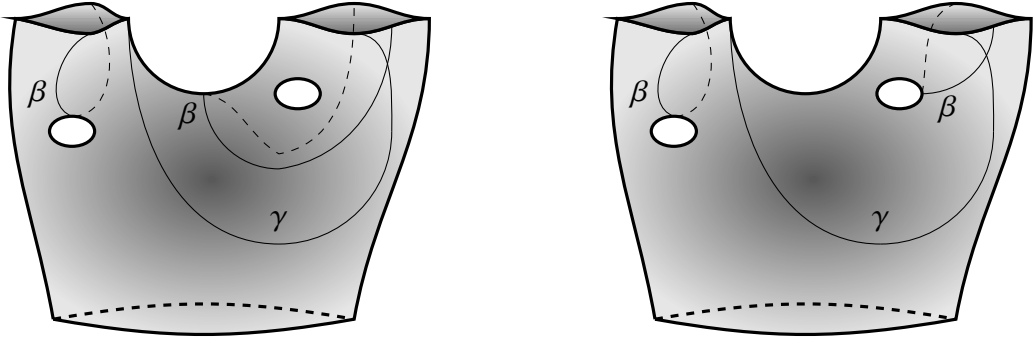
\begin{figure}
 \centering
 	\begin{tikzpicture}

\shadedraw[inner color=white!35!black, outer color=white!90!black, very thick]

(-2,-1) to[out=350,in=190] (2,-1)
to[out=80,in=270] (3,3)
to[out=180,in=0] (2,2.8)
to[out=180,in=0] (1,3)
to[out=270,in=0] (0,2)
to[out=180,in=270] (-1,3)
to[out=190,in=10] (-1.5,2.8)
to[out=180,in=350] (-2.5,3)
to[out=260,in=100] (-2,-1);

\shadedraw[bottom color=white!35!black, top color=white!90!black, very thick]
(3,3)
to[out=180,in=0] (2,2.8)
to[out=180,in=0] (1,3)
to[out=0,in=180] (2,3.2)
to[out=0,in=180] (3,3);
\shadedraw[bottom color=white!35!black, top color=white!90!black, very thick]
(-1,3)
to[out=190,in=10] (-1.5,2.8)
to[out=180,in=350] (-2.5,3)
to[out=0,in=180] (-1.5,3.2)
to[out=0,in=180] (-1,3);


\filldraw [fill=white, very thick] (-1.75,1.5) ellipse (.3 and .2);
\filldraw [fill=white, very thick] (1.25,2) ellipse (.3 and .2);


\draw (-1.5,2.8) to[out=200,in=160] (-1.75,1.7);
\draw[dashed] (-1.5,3.2) to[out=310,in=0] (-1.75,1.7);
\draw (0,2) to[out=270,in=180] (1,1)
to[out=8,in=270] (2.5,2.9);
\draw [dashed] (0,2) to[out=350,in=150] (1,1.2)
to[out=10,in=270] (2,3.2)
;
\draw (-1,3) to[out=270,in=180] (1,0);
\draw (1,0) to[out=0,in=270] (2.5,1.5);
\draw (2.5,1.5) to[out=90,in=0] (2,2.8)
;
\draw[dashed,very thick] (-2,-1) to[out=10,in=170] (2,-1);

\draw (-2.2,2) node{$\beta$};
\draw (1,.3) node{$\gamma$};
\draw (-.2,1.7) node{$\beta$};  

\begin{scope}[xshift=8cm]
\shadedraw[inner color=white!35!black, outer color=white!90!black, very thick]

(-2,-1) to[out=350,in=190] (2,-1)
to[out=80,in=270] (3,3)
to[out=180,in=0] (2,2.8)
to[out=180,in=0] (1,3)
to[out=270,in=0] (0,2)
to[out=180,in=270] (-1,3)
to[out=190,in=10] (-1.5,2.8)
to[out=180,in=350] (-2.5,3)
to[out=260,in=100] (-2,-1);

\shadedraw[bottom color=white!35!black, top color=white!90!black, very thick]
(3,3)
to[out=180,in=0] (2,2.8)
to[out=180,in=0] (1,3)
to[out=0,in=180] (2,3.2)
to[out=0,in=180] (3,3);
\shadedraw[bottom color=white!35!black, top color=white!90!black, very thick]
(-1,3)
to[out=190,in=10] (-1.5,2.8)
to[out=180,in=350] (-2.5,3)
to[out=0,in=180] (-1.5,3.2)
to[out=0,in=180] (-1,3);


\filldraw [fill=white, very thick] (-1.75,1.5) ellipse (.3 and .2);
\filldraw [fill=white, very thick] (1.25,2) ellipse (.3 and .2);


\draw (-1.5,2.8) to[out=200,in=160] (-1.75,1.7);
\draw[dashed] (-1.5,3.2) to[out=310,in=0] (-1.75,1.7);
\draw (1.55,2) to[out=0,in=270] (2.5,2.9);
\draw [dashed] (1.55,2) to[out=80,in=200] (2,3.2);
\draw (-1,3) to[out=270,in=180] (1,0);
\draw (1,0) to[out=0,in=270] (2.5,1.5);
\draw (2.5,1.5) to[out=90,in=0] (2,2.8)
;
\draw[dashed,very thick] (-2,-1) to[out=10,in=170] (2,-1);

\draw (-2.2,2) node{$\beta$};
\draw (1,.3) node{$\gamma$};
\draw (1.9,1.8) node{$\beta$};

\end{scope}
\end{tikzpicture}

\caption{Examples of $\beta$ and $\gamma$ in $S_{\alpha}$}
\label{3primefig1}
 \end{figure}

\indent Then let $N$ be a regular neighbourhood of $\alpha$ and $\gamma$; since $i(\alpha,\gamma) = 1$ then $N$ is homeomorphic to a torus with one boundary component. Let $\gamma^{\prime}$ be the image of $\gamma$ under a Dehn twist along $\alpha$ on $N$. See figure \ref{3primefig2} for the corresponding diagram.
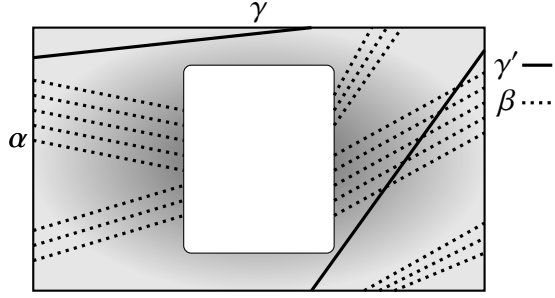
\begin{figure}
\centering
\begin{tikzpicture}
\shadedraw[inner color=white!35!black, outer color=white!90!black,thick]
(0,0)--(6,0)--(6,3.5)--(0,3.5)--cycle;
\filldraw [rounded corners=1mm,fill=white] (2,.5)--(4,.5)--(4,3)--(2,3)--cycle;


\foreach \x in{0,.2,.4}
\draw[dotted, very thick] (0,.4+\x)--(2,1+\x);

\foreach \x in{0,.2,.4,.6,.8}
\draw[dotted, very thick] (0,2+\x)--(2,1.6+\x);

\draw[very thick] (0,3.1)--(3.7,3.5);


\foreach \x in{0,.2,.4,.6,.8}
\draw[dotted, very thick] (4,1+\x)--(6,2.1+\x);

\foreach \x in{0,.2,.4}
\draw[dotted, very thick] (4.8-\x,0)--(6,.5+\x);

\foreach \x in{0,.2,.4}
\draw[dotted, very thick] (4,2.2+\x)--(4.9-\x,3.5);

\draw[very thick] (3.7,0)--(6,3.2);


\draw (-.2,2) node{$\alpha$};
\draw (3,3.7) node{$\gamma$};
\draw (-.2,2) node{$\alpha$};
\draw (6.3,3) node{$\gamma'$};
\draw[very thick] (6.5,3)--(6.9,3) ;
\draw (6.3,2.5) node{$\beta$};
\draw[dotted, very thick] (6.5,2.5)--(6.9,2.5) ;
\end{tikzpicture}
\caption{Diagram of $N$.}
\label{3primefig2}
\end{figure}

\indent Thus $\{\alpha,\gamma,\gamma^{\prime}\}$ form a triple and by construction $i(\beta,\gamma^{\prime}) = i(\alpha,\beta) - i(\beta,\gamma)$, with both curves intersecting $\beta$ at least once.
\end{proof}

\begin{Lema}\label{Intn}
 Let $S$ be an infinite genus surface and let $\fun{\phi}{\gs{S}}{\gs{S}}$ be an automorphism. Then $i(\alpha,\beta) = i(\phi(\alpha),\phi(\beta))$ for all $\alpha,\beta \in \mathcal{V}(\gs{S})$.
\end{Lema}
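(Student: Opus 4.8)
The plan is to argue by induction on $n = i(\alpha,\beta)$, showing that the graph automorphism $\phi$ of $\gs{S}$ cannot alter the geometric intersection number. First the base cases: if $n = 0$ then $\alpha$ and $\beta$ are disjoint, and Lemma \ref{Int0}, applied to both $\phi$ and $\phi^{-1}$, forces $i(\phi(\alpha),\phi(\beta)) = 0$; if $n = 1$ then $\alpha$ and $\beta$ span an edge of $\gs{S}$, and since $\phi$ preserves adjacency and non-adjacency in both directions we get $i(\phi(\alpha),\phi(\beta)) = 1$. The content is therefore entirely in the inductive step.

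Assume the statement holds for all pairs of intersection number $< n$ and let $i(\alpha,\beta) = n \ge 2$. The idea is to encode $n$ as a sum of strictly smaller intersection numbers in a way that $\phi$ can see. According to whether $\beta$ has a two-sided component with respect to $\alpha$, I would invoke Lemma \ref{Schmutz3prime} or the two-sided case of Lemma \ref{Schmutz3} to produce a triple $\{\alpha,\gamma,\gamma'\}$ together with the additive relation
\[
i(\alpha,\beta) = i(\beta,\gamma) + i(\beta,\gamma'), \qquad \min\{i(\beta,\gamma), i(\beta,\gamma')\} > 0.
\]
Since both summands are positive and add up to $n$, each is at most $n-1$, so the inductive hypothesis applies to the pairs $(\beta,\gamma)$ and $(\beta,\gamma')$ and yields $i(\phi(\beta),\phi(\gamma)) = i(\beta,\gamma)$ and $i(\phi(\beta),\phi(\gamma')) = i(\beta,\gamma')$.

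It then remains to transfer the additive relation through $\phi$, that is, to establish
\[
i(\phi(\alpha),\phi(\beta)) = i(\phi(\beta),\phi(\gamma)) + i(\phi(\beta),\phi(\gamma')),
\]
which together with the previous equalities gives $i(\phi(\alpha),\phi(\beta)) = n$. For this I would first check that $\phi$ carries the triple $\{\alpha,\gamma,\gamma'\}$ to a triple $\{\phi(\alpha),\phi(\gamma),\phi(\gamma')\}$: a triple is a triangle of $\gs{S}$ whose three curves span a one-holed torus, and $\phi$ preserves triangles because it is a graph isomorphism, while the one-holed-torus condition is detectable through the neighbour relations of Lemma \ref{Schmutz3} and the disjointness statement of Lemma \ref{Int0}, all of which $\phi$ respects. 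Once the image is known to be a triple, the additive decomposition should be pinned down by the same graph-theoretic certificate that produced it in the domain, namely the neighbour containment $N(\alpha,\beta) \subseteq N(\gamma)\cup N(\gamma')$ of Lemma \ref{Schmutz3}, thereby singling out $\phi(\alpha)$ as the member of the triple whose intersection with $\phi(\beta)$ is the \emph{sum} of the other two. Running the identical argument for $\phi^{-1}$ then secures the reverse inequality and hence equality.

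The main obstacle is precisely this last transfer. In a triple the three intersection numbers with a fourth curve satisfy a triangle-type relation in which the largest equals the sum of the other two, and the delicate point is to guarantee that $\phi$ preserves \emph{which} member of the triple realizes that largest value, so that one is not left with the difference $|i(\beta,\gamma) - i(\beta,\gamma')|$ in place of the sum. I expect controlling this to be the crux, and the way to handle it is to carry the full combinatorial witness — the triple together with the neighbour-containment of Lemmas \ref{Schmutz3} and \ref{Schmutz3prime} — all of whose ingredients (adjacency, non-adjacency, and disjointness via Lemma \ref{Int0}) are manifestly invariant under the automorphism $\phi$.
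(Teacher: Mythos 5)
Your outline follows the paper's proof closely — induction on $i(\alpha,\beta)$, base cases via Lemma \ref{Int0} and edge-preservation, the triple $\{\alpha,\gamma,\gamma'\}$ with $i(\alpha,\beta)=i(\beta,\gamma)+i(\beta,\gamma')$ supplied by Lemmas \ref{Schmutz3} and \ref{Schmutz3prime}, the inductive hypothesis applied to $(\beta,\gamma)$ and $(\beta,\gamma')$, and preservation of triples — but the step you yourself identify as the crux contains a genuine gap. You propose to force the \emph{equality} $i(\phi(\alpha),\phi(\beta)) = i(\phi(\beta),\phi(\gamma))+i(\phi(\beta),\phi(\gamma'))$ by transporting the neighbour containment $N(\alpha,\beta)\subseteq N(\gamma)\cup N(\gamma')$ through $\phi$. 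The containment does transfer (it is purely graph-theoretic), but it cannot do the job you assign to it: in Lemma \ref{Schmutz3} the containment and the additive relation are both \emph{conclusions} of one geometric construction, and nothing asserts the converse, namely that a triple satisfying the containment must satisfy the additive relation. So the containment does not ``single out $\phi(\alpha)$ as the sum member,'' and the sum-versus-difference ambiguity you flag remains unresolved. A second inaccuracy compounds this: your background fact that in a triple ``the largest of the three intersection numbers equals the sum of the other two'' is only valid for a fourth curve \emph{contained} in the one-holed torus spanned by the triple. The curve $\phi(\beta)$ generally meets that torus in a collection of arcs, and for arcs the exact relation can fail (for instance, arcs whose visits to the complementary triangle of the diagram pair up all three curves give total intersection data like $(2,2,2)$, where no number is the sum of the other two); only an inequality survives.

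That inequality is precisely what the paper uses, and it makes the problem you were trying to solve evaporate: one never needs to know which member of the image triple realizes the sum. Drawing $\{\phi(\alpha),\phi(\gamma),\phi(\gamma')\}$ in the square diagram of its one-holed torus, every intersection of $\phi(\beta)$ with $\phi(\alpha)$ forces a distinct intersection with $\phi(\gamma)$ or with $\phi(\gamma')$, so that $i(\phi(\alpha),\phi(\beta)) \le i(\phi(\beta),\phi(\gamma))+i(\phi(\beta),\phi(\gamma'))$, and the right-hand side equals $i(\beta,\gamma)+i(\beta,\gamma') = i(\alpha,\beta)$ by the inductive hypothesis. This gives the one-sided estimate $i(\phi(\alpha),\phi(\beta)) \le i(\alpha,\beta)$, and the reverse inequality comes from running the argument for $\phi^{-1}$ — the closing move you already suggest. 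Your proof becomes correct if you delete the certificate step and the claimed exact relation in the image, and replace them by this one-sided geometric estimate; as written, the central step rests on an implication that is not available.
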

\begin{proof}
 Let $\alpha,\beta \in \mathcal{V}(\gs{S})$. If $i(\alpha,\beta) = 0$, then due to lemma \ref{Int0} we have that $i(\phi(\alpha),\phi(\beta)) = 0$. If $i(\alpha,\beta) = 1$, then due to $\phi$ being an automorphism $i(\phi(\alpha),\phi(\beta)) = 1$. For $i(\alpha,\beta) \geq 2$, we will proceed by induction on the geometric intersection number.\\
 \indent Let us suppose the geometric intersection number is preserved under automorphisms for curves which intersect at most $k$ times for a $k \geq 1$.\\
 \indent Let $i(\alpha,\beta) = k+1$. Due to lemmas \ref{Schmutz3} and \ref{Schmutz3prime}, we know there exists $\gamma,\gamma^{\prime} \in \mathcal{V}(\gs{S}) \backslash \{\alpha,\beta\}$ such that $\{\alpha,\gamma,\gamma^{\prime}\}$ is a triple, $i(\alpha,\beta) = i(\beta,\gamma) + i(\beta,\gamma^{\prime})$ and $\min\{i(\beta,\gamma),i(\beta,\gamma^{\prime})\} > 0$.\\
 \indent Since $i(\beta,\gamma), i(\beta,\gamma^{\prime}) < k+1$, then $i(\beta,\gamma) = i(\phi(\beta),\phi(\gamma))$ and $i(\beta,\gamma^{\prime}) = i(\phi(\beta),\phi(\gamma^{\prime}))$. From the work of Schmutz [\emph{Ibid}.] one can deduce that if $S$ is an infinite genus surface and $\fun{\phi}{\gs{S}}{\gs{S}}$ an automorphism, then for every triple $\{\alpha,\beta,\gamma\}$ we have that $\{\phi(\alpha),\phi(\beta),\phi(\gamma)\}$ form a triple.
 Therefore $\{\phi(\alpha),\phi(\gamma),\phi(\gamma^{\prime})\}$ form a triple. Using a diagram of the torus with one boundary component which contains this triple (see figure ~\ref{3primefig2}), we can see that each time $\phi(\beta)$ intersects $\phi(\alpha)$ then either $\phi(\beta)$ intersects $\phi(\gamma)$ or $\phi(\beta)$ intersects $\phi(\gamma')$. Therefore $i(\phi(\beta), \phi(\gamma)) + i(\phi(\beta),\phi(\gamma^{\prime})) \geq i(\phi(\alpha),\phi(\beta))$. Thus $i(\alpha,\beta) \geq i(\phi(\alpha),\phi(\beta))$. Applying the same argument on $\phi^{-1}$ we obtained the symmetric inequality, therefore $i(\alpha,\beta) = i(\phi(\alpha),\phi(\beta))$.
\end{proof}
\begin{Lema}\label{homeopantsdec}
 Let $S$ be an infinite genus surface and let $\fun{\phi}{\gs{S}}{\gs{S}}$ be an automorphism. If $P$ is a pants decomposition of $S$, then there exist an homeomorphism $h \in \MCG{S}$ such that $h(\alpha) = \phi(\alpha)$ for all $\alpha \in P$.
\end{Lema}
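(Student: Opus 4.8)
The plan is to realize $\phi$ on $P$ by building a homeomorphism piece by piece over the pairs of pants cut out by $P$ and then gluing. First I would collect the combinatorial input already at hand. By Lemma~\ref{Intn} the automorphism $\phi$ preserves geometric intersection numbers, hence by Lemma~\ref{Int0} it preserves disjointness; and by Lemmas~\ref{pairofpants} and~\ref{peripheralpairs}, which are valid for $\gs{S}$ by Remark~\ref{R:changing}, it sends triples of curves that cobound a pair of pants to such triples and peripheral pairs to peripheral pairs. Consequently $\phi$ carries $P$ to a pants decomposition $P' := \phi(P)$: the images of the nonseparating curves of $P$ are prescribed by $\phi$ directly, while the image of a separating curve $\alpha \in P$ is pinned down by requiring that the pair-of-pants relation and the adjacency graph be preserved, i.e.\ as the curve completing the pants adjacent to the images of its $\adj{P}$-neighbours, which exists and is unique by Lemma~\ref{pairofpants}. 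The resulting bijection $\phi\colon P \to P'$ thus preserves the whole combinatorial pants structure: two (resp.\ three) curves of $P$ cobound a pair of pants in $S_{P}$ if and only if their images do, and boundary components of $S$ correspond to boundary components of $S$ via Lemma~\ref{peripheralpairs}.

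Next I would set up the gluing. Cutting $S$ along $P$ yields a locally finite disjoint union of pairs of pants $\{Y_{j}\}_{j}$, and cutting along $P'$ yields $\{Y'_{j}\}_{j}$; the structure-preserving bijection $\phi\colon P \to P'$ induces a bijection $Y_{j} \leftrightarrow Y'_{j}$ under which the boundary curves of $Y_{j}$ correspond to those of $Y'_{j}$ via $\phi$. Since any two pairs of pants are homeomorphic by a map carrying any prescribed boundary-to-boundary correspondence, I would choose for each $j$ a homeomorphism $h_{j}\colon Y_{j} \to Y'_{j}$ inducing this correspondence, all selected with one fixed orientation convention (either all orientation-preserving or all orientation-reversing, according to whether $\phi$ respects or reverses the cyclic order of pants boundaries) so that neighbouring pieces are compatible. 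Gluing the $Y_{j}$ back along $P$ and the $Y'_{j}$ along $P'$ recovers $S$, and the $h_{j}$ assemble to a candidate map $h\colon S \to S$ satisfying $h(\alpha) = \phi(\alpha)$ for every $\alpha \in P$ by construction.

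The hard part is the consistency of the gluing. For a curve $\alpha \in P$ shared by $Y_{j}$ and $Y_{k}$, the restrictions $h_{j}|_{\alpha}$ and $h_{k}|_{\alpha}$ are both homeomorphisms onto the circle $\phi(\alpha)$, but a priori they differ by a rotation of that circle, so the $h_{j}$ need not glue to a continuous map. I would remove this discrepancy by precomposing some $h_{j}$ with homeomorphisms of $Y_{j}$ supported in a collar of the offending boundary curve, adjusting one matching at a time. Because $P$ is locally finite each $Y_{j}$ meets only finitely many neighbours, so the corrections can be propagated consistently along a spanning tree of the (connected) adjacency graph $\adj{P}$; the discrepancies on the remaining edges are absorbed by Dehn twists about curves of $P$, which fix every curve of $P$ and so do not disturb the identity $h(\alpha) = \phi(\alpha)$. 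The resulting $h$ is continuous with continuous inverse, being built from homeomorphisms of the compact pieces $Y_{j}$ that agree on the common boundary curves; hence $h \in \MCG{S}$ and $h(\alpha) = \phi(\alpha)$ for all $\alpha \in P$, as required.
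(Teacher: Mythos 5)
Your proposal is correct and takes essentially the same route as the paper: invoke Remark \ref{R:changing} (via Lemmas \ref{pairofpants} and \ref{peripheralpairs}) to see that $\phi(P)$ is a pants decomposition whose pants structure mirrors that of $P$, then assemble $h$ piecewise from homeomorphisms between corresponding pairs of pants of $S_{P}$ and $S_{\phi(P)}$. The paper's proof is simply a terser version of this; your additional care with the boundary-circle discrepancies (collar adjustments to make the pieces glue) fills in details the paper leaves implicit, though note that in the intended application $P$ consists only of nonseparating curves, so your side construction assigning images to separating curves of $P$ is unnecessary.
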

\begin{proof}
 From remark \ref{R:changing}, we know that $\phi(P)$ is a pants decomposition and the boundaries of pair of pants in $S_{\phi(P)}$ induced by curves of $\phi(P)$ are boundaries of pair of pants in $S_{P}$ induced by curves of $P$. Then we can define an homeomorphism of $S$ by parts using homeomorphisms from the connected components of $S_{P}$ to the corresponding connected components of $S_{\phi(P)}$; this homeomorphism by construction will agree with $\phi$ for every element in $P$.
\end{proof}
\begin{remark}
 It is clear, using theorem \ref{Teo6}, that this lemma remains valid if we substitute $\gs{S}$ by $\nons{S}$.
\end{remark}
\begin{Lema}\label{Sch04}\cite{Sch}
 Let $S^{\prime}$ be a surface of genus zero and four boundary components. Let $\alpha,\beta \in \mathcal{V}(\comp{S^{\prime}})$ with $i(\alpha,\beta) =2$.
 \begin{enumerate}
  \item Let $\gamma \in \mathcal{V}(\comp{S^{\prime}})$ such that $i(\alpha,\gamma) = 2$. Then there exists $h\in \MCG{S^{\prime}}$ such that $h(\alpha) = \alpha$ and $h(\beta) = \gamma$.
  \item There are exactly two curves $\gamma_{1},\gamma_{2} \in \mathcal{V}(\comp{S^{\prime}})$ such that $i(\alpha, \gamma_{i}) = i(\beta,\gamma_{i}) = 2$ for $i=1,2$. Moreover, there exists $h \in \MCG{S^{\prime}}$ such that $h(\alpha) = \alpha$, $h(\beta) = \beta$, and $h(\gamma_{1}) = \gamma_{2}$.
 \end{enumerate}
\end{Lema}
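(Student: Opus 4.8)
The plan is to pass to the classical identification of $\comp{S^{\prime}}$ with the Farey graph. First I would recall that every essential simple closed curve in a sphere $S^{\prime}$ with four boundary components separates the four boundary components into two pairs, and that no two distinct essential curves are disjoint, so the minimal geometric intersection number between two non-isotopic essential curves is $2$. Parametrising isotopy classes of essential curves by their \emph{slope} in $\mathbb{P}^{1}(\mathbb{Q})=\mathbb{Q}\cup\{\infty\}$ (via the hyperelliptic double cover of $S^{\prime}$ by a torus, under which a curve of slope $p/q$ lifts to the linear foliation of that slope), one obtains the formula $i(p/q,r/s)=2|ps-qr|$. Thus $i(\cdot,\cdot)=2$ precisely for Farey neighbours, $\comp{S^{\prime}}$ is isomorphic to the Farey graph, and its automorphism group is $\mathrm{PGL}_{2}(\mathbb{Z})$ acting on $\mathbb{P}^{1}(\mathbb{Q})$ by M\"obius transformations. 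The natural map $\MCG{S^{\prime}}\to\Aut{\comp{S^{\prime}}}\cong\mathrm{PGL}_{2}(\mathbb{Z})$ is surjective: orientation-preserving classes realise $\mathrm{PSL}_{2}(\mathbb{Z})$, and an orientation-reversing reflection supplies the remaining coset. This dictionary is the engine for both parts.

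For part (1), I would first apply an element of $\MCG{S^{\prime}}$ so that $\alpha$ has slope $\infty=1/0$. The condition $i(\alpha,\cdot)=2$ then forces integer slope, so $\beta=m$ and $\gamma=n$ for integers $m,n$. The stabiliser of $\infty$ in $\mathrm{PGL}_{2}(\mathbb{Z})$ is $\{z\mapsto\pm z+k:k\in\mathbb{Z}\}$, which acts transitively on $\mathbb{Z}$; choosing the translation $z\mapsto z+(n-m)$ and lifting it through the surjection $\MCG{S^{\prime}}\to\mathrm{PGL}_{2}(\mathbb{Z})$ yields $h\in\MCG{S^{\prime}}$ with $h(\alpha)=\alpha$ (any preimage fixes $\alpha$, since it fixes the slope $\infty$) and $h(\beta)=\gamma$. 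Concretely such $h$ can be assembled from the Dehn twist $T_{\alpha}$ together with a half-twist exchanging the two boundary components lying on one side of $\alpha$.

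For part (2), I would normalise so that $\alpha=\infty$ and $\beta=0$, which is possible because $i(\alpha,\beta)=2$ makes them Farey neighbours and $\mathrm{PGL}_{2}(\mathbb{Z})$ acts transitively on ordered adjacent pairs. A slope $\gamma=p/q$ then satisfies $i(\alpha,\gamma)=2|q|=2$ and $i(\beta,\gamma)=2|p|=2$ simultaneously if and only if $|p|=|q|=1$, that is $\gamma\in\{1/1,-1/1\}$. Hence there are exactly two such curves, $\gamma_{1}$ of slope $1$ and $\gamma_{2}$ of slope $-1$; these are precisely the two apexes of the two Farey triangles sharing the edge $\{\infty,0\}$. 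The M\"obius involution $z\mapsto -z$ fixes $\infty$ and $0$ and interchanges $1$ and $-1$; as it has determinant $-1$ it is orientation-reversing, so it is realised by an orientation-reversing $h\in\MCG{S^{\prime}}$ with $h(\alpha)=\alpha$, $h(\beta)=\beta$ and $h(\gamma_{1})=\gamma_{2}$. This is exactly where the \emph{extended} mapping class group is needed.

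The main obstacle is not the combinatorics above, which is immediate once the dictionary is in place, but establishing that dictionary rigorously: that $\comp{S^{\prime}}$ is the Farey graph, and especially that $\MCG{S^{\prime}}$ surjects onto $\mathrm{PGL}_{2}(\mathbb{Z})$ so that the required M\"obius maps are genuinely realised by mapping classes. If one prefers to avoid this machinery, both statements admit a direct surgery proof: cutting $S^{\prime}$ along $\alpha$ yields two pairs of pants, each meeting a curve with $i(\alpha,\cdot)=2$ in a single essential arc, so any two such curves differ only by the relative gluing along $\alpha$, adjusted by $T_{\alpha}$ and a half-twist; a similar bounded-arc analysis in the complement of $\alpha\cup\beta$ exhibits the two curves $\gamma_{i}$ and the reflection exchanging them. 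I would present the Farey-graph argument as the main line and keep the surgery picture as the geometric justification of the two shift operations.
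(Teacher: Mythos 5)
Your proposal is correct, but it cannot be compared line-by-line with ``the paper's proof'' because the paper gives none: Lemma \ref{Sch04} concerns the compact surface $S_{0,4}$ and is quoted from Schmutz Schaller \cite{Sch}, the authors adding only a remark identifying the relevant homeomorphisms. What you supply is therefore a self-contained alternative, and it is sound. The dictionary you use --- essential curves on $S^{\prime}$ correspond to slopes in $\mathbb{P}^{1}(\mathbb{Q})$ via the degree-two branched cover by the torus, with $i(p/q,r/s)=2|ps-qr|$ --- is classical, and the single load-bearing fact you correctly isolate, namely that $\MCG{S^{\prime}}$ (which by the paper's definition consists of \emph{all} homeomorphisms up to isotopy, so boundary components may be permuted) acts on slopes through the full group $\mathrm{PGL}_{2}(\mathbb{Z})$, follows by descending the linear action of $\mathrm{GL}_{2}(\mathbb{Z})$ on the torus through that cover, with $\mathrm{PSL}_{2}(\mathbb{Z})$ realized by orientation-preserving classes. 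Granting this, your stabilizer-of-$\infty$ computation for part (1), and for part (2) the computation forcing $|p|=|q|=1$ together with the realization of $z\mapsto-z$ by an orientation-reversing class, settle both statements, including the claim that there are \emph{exactly} two curves $\gamma_{1},\gamma_{2}$.

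One detail of your write-up is in fact more accurate than the paper's own gloss. The remark following the lemma asserts that the homeomorphism of part (1) ``is just a Dehn twist about $\alpha$''; but, as your coordinates show, $T_{\alpha}$ acts on integer slopes by $z\mapsto z\pm 2$ and fixes every boundary component, hence preserves the pairing of the four boundary components induced by a curve. Powers of $T_{\alpha}$ can therefore carry $\beta$ to $\gamma$ only when the two slopes have equal parity (equivalently, when $\beta$ and $\gamma$ induce the same pairing); otherwise a half-twist permuting two boundary components is genuinely needed, exactly as in your ``Dehn twist plus half-twist'' description. The lemma as stated is unaffected, since $\MCG{S^{\prime}}$ contains such half-twists; but anyone relying on the remark to extend the part-(1) homeomorphism by the identity outside an embedded copy of $S_{0,4}$ (as the paper does with $h_{2,i}$ in the proof of theorem \ref{surGS}) must additionally verify the equal-pairing condition --- which does hold there because the automorphism preserves triples bounding pairs of pants, as in lemma \ref{pairofpants} --- and your parity bookkeeping is what makes this point transparent.
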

\begin{remark}
The homeomorphism of part (1) in the preceding lemma is just a Dehn twist about $\alpha$, where as the homeomorphism from part (2) is an orientation-reversing involution that leaves invariant each connected component in the boundary of $S_{0,4}$.
\end{remark}
 The proof theorem \ref{surGS} uses the notion of Dehn-Thurston coordinates. Therefore we recall it and discuss it briefly in the context of infinite surfaces in the following paragraphs.

\begin{Def}[\sc Dehn-Thurston coordinates]
 A Dehn-Thurston coordinates system of curves is a set $D$ of curves that parametrize every curve $\alpha \in \mathcal{V}(\comp{S})$ using the geometric intersection number, i.e. for $\alpha,\beta \in \mathcal{V}(\comp{S})$ if $i(\alpha,\gamma) = i(\beta,\gamma)$ for all $\gamma \in D$, then $\alpha = \beta$.
\end{Def}
\indent For compact surface, it is well known that Dehn-Thurston coordinate systems exist, see \cite{HP}. For noncompact surfaces such a system of curves can be realized in the following way. Let $\{\alpha_{i}\}_{i\in \mathbf{N}}$ be a pants decomposition, $\{\beta_{i}\}_{i \in \mathbf{N}}$ be curves such that $i(\alpha_{i},\beta_{i})=2$ and $i(\alpha_{i},\beta_{j}) =0$ for $i \neq j$, and $\{\gamma_{i}\}_{i\in \mathbf{N}}$ be curves such that $i(\alpha_{i},\gamma_{i}) = i(\beta_{i},\gamma_{i}) = 2$ and $i(\alpha_{i},\gamma_{j}) = 0$ for $i \neq j$. Then the set of curves $D$ formed by the union of elements in  $\{\alpha_{i}\}_{i \in \mathbf{N}}$, $\{\beta_{i}\}_{i \in \mathbf{N}}$ and $\{\gamma_{i}\}_{i \in \mathbf{N}}$ is a Dehn-Thurston coordinate system. Indeed, any curve $\delta$ in $S$ will only intersect finitely many curves in $D$, hence we can take any compact subsurface $S^{\prime}$, such that it contains $\delta$ and there is a (finite) subset $D^{\prime}$ of $D$ that is a Dehn-Thurston coordinate system of $S^{\prime}$.  Any other curve in $S$ with the same Dehn-Thurston coordinates as $\delta$ on the system $D$, would have to be isotopic to a curve contained in $S^{\prime}$ and thus would have the same Dehn-Thurston coordinates as $\delta$ on the system $D^{\prime}$, therefore it would be isotopic to $\delta$. We must remark that, when $S$ is an infinite genus surface such that $\Ends{S} = \Endsg{S}$, we can also construct the Dehn-Thurston coordinate system $D$ with families  $\{\alpha_{i}\}_{i \in \mathbf{N}}$, $\{\beta_{i}\}_{i \in \mathbf{N}}$ and $\{\gamma_{i}\}_{i \in \mathbf{N}}$ formed exclusively by nonseparating curves. \\

\textbf{Proof theorem \ref{surGS}}. Given that $\Ends{S} = \Endsg{S}$ we can construct $P = \{\alpha_{i}\}_{i \in \mathbf{N}}$ a pants decomposition of $S$ formed by nonseparating curves.  Let $\phi:\gs{S}\to\gs{S}$ an automorphism. Due to lemma \ref{homeopantsdec} there exists an homeomorphism $\fun{h_{1}}{S}{S}$ such that $h_{1}(\alpha_{i}) = \phi(\alpha_{i})$ for all $\alpha_{i} \in P$.\\
\indent Again, since $\Ends{S} = \Endsg{S}$ we can construct $\{\beta_{i}\}_{i \in \mathbf{N}}$ a collection of nonseparating curves such that $i(\alpha_{i},\beta_{i}) = 2$ for all $i$ and $i(\alpha_{i},\beta_{j}) = 0$ for $i \neq j$. We can define an homeomorphism $\fun{h_{2}}{S}{S}$ such that $h_{2}(h_{1}(\alpha_{i})) = h_{1}(\alpha_{i}) = \phi(\alpha_{i})$ and $h_{2}(h_{1}(\beta_{i})) = \phi(\beta_{i})$ in the following way. For every $i \in\mathbf{N}$ the curves $\alpha=h_{1}(\alpha_{i})$, $\beta=h_{1}(\beta_{i})$ and $\gamma=\phi(\beta_{i})$ satisfy the hypotheses of part (1) in lemma \ref{Sch04} and lie in a subsurface $S_{i}$ homeomorphic to $S_{0,4}$ that does not contain any element in $h_{1}(P) \backslash \{h(\alpha_{i})\}$, $S_{i}$ contains $h_{1}(\beta_{i})$ and $\phi(\beta_{i})$, and its boundary components are isotopic to the curves adjacent to $h_{1}(\alpha_{i})$ with respect to $h_{1}(P)$. Let $\fun{h_{2,i}}{S_{i}}{S_{i}}$ be the homeomorphism from $(1)$ in lemma \ref{Sch04}. This homeomorphism is just a Dehn twist about $\alpha$, therefore it preserves orientation  and its support $K_{i}\subset S_{i}$ satisfies that $K_{i}\cap K_{j}=\emptyset$ for $i\neq j$ for all $i,j\in\mathbf{N}$. Hence $h_{2}$ can be defined by parts using $\{h_{2,i}\}_{i \in \mathbf{N}}$.\\
\indent Let $\{\gamma_{i}\}_{i \in \mathbf{N}}$ be a collection of curves such that $i(\alpha_{i},\gamma_{i}) = i(\beta_{i},\gamma_{i}) = 2$ and $i(\alpha_{i},\gamma_{j}) = 0$ for $i \neq j$. We can define an homeomorphism $\fun{h_{3}}{S}{S}$ such that $h_{3}(h_{2}(h_{1}(\alpha_{i}))) = h_{2}(h_{1}(\alpha_{i}) = \phi(\alpha_{i})$, $h_{3}(h_{2}(h_{1}(\beta_{i}))) = h_{2}(h_{1}(\beta_{i})) = \phi(\beta_{i})$ and $h_{3}(h_{2}(h_{1}(\gamma_{i}))) = \phi(\gamma_{i})$ in the following way. For every $i\in\mathbf{N}$, let now $\alpha=h_{1}(h_{2}(\alpha_{i}))$, $\beta=h_{1}(h_{2}(\beta_{i}))$, $\gamma_{1}=h_{1}(h_{2}(\gamma_{i}))$ and $\gamma_{2}=\phi(\gamma_{i})$. Analogously to the preceding case, these curves satisfy the hypotheses of part (2) in lemma \ref{Sch04}. Let $\fun{h_{3,i}}{R_{i}}{R_{i}}$ be the (orientation-reversing) homeomorphism from part (2) in lemma \ref{Sch04}, where $R_{i}$ is homeomorphic to $S_{0,4}$ and contains the curves $\alpha$, $\beta$, $\gamma_{1}$ and $\gamma_{2}$.  It is not difficult to see that if $i\neq j$ and $R_{i}\cap R_{j}\neq \emptyset$, then $R_{i,j}=R_{i}\cap R_{j}\cong S_{0,3}$. Moreover $h_{3,i}$ and $h_{3,j}$ coincide in $R_{i,j}$ and hence we can define $h_{3}$ by parts using $\{h_{3,i}\}_{i\in\mathbf{N}}$.\\
\indent Let $h = h_{3} \circ h_{2} \circ h_{1}$. Since $P^{\prime} = P \cup \{\beta_{i}\} \cup \{\gamma_{i}\}$ form a Dehn-Thurston coordinates system of curves, then $h(P^{\prime})$ is a Dehn-Thurston coordinates system of curves, and by construction $h(\varepsilon) = \phi(\varepsilon)$ for all $\varepsilon \in P^{\prime}$. Therefore, due to lemma \ref{Intn}, for all $\delta \in \mathcal{V}(\gs{S})$ and all $\varepsilon \in P^{\prime}$:
\begin{equation}
i(\phi(\delta),\phi(\varepsilon)) = i(\delta,\varepsilon) = i(h(\delta),h(\varepsilon)) = i(h(\delta),\phi(\varepsilon)),
\end{equation}
then $\phi(\delta) = h(\delta)$ for all $\delta \in \mathcal{V}(\gs{S})$, which implies $\Psi_{\gs{S}}$ is surjective. \qed

\begin{Cor}
	\label{surNS} 
 Let $S_{1}$ and $S_{2}$ be infinite genus surfaces, such that $\Ends{S_{i}} = \Endsg{S_{i}}$ for $i = 1,2$ and let $\fun{\phi}{\gs{S_{1}}}{\gs{S_{2}}}$ be an isomorphism. Then $S_{1}$ and $S_{2}$ are homeomorphic and $\phi$ is induced by a mapping class in ${\rm MCG}^{*}(S_{1})$.
\end{Cor}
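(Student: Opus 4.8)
The plan is to reduce the isomorphism to the automorphism case already settled in Theorem \ref{surGS}, and to produce the homeomorphism $S_{1}\cong S_{2}$ by transferring $\phi$ to the nonseparating curve complex so that Corollary \ref{CorIsoNSC} applies. In other words, Corollary \ref{CorIsoNSC} supplies the rigidity half of the statement and Theorem \ref{surGS} supplies the surjectivity half; the work lies entirely in chaining them together.

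First I would exploit that $\gs{S_{i}}$ and $\nons{S_{i}}$ have the \emph{same} vertex set, and that a simplex of $\nons{S_{i}}$ is exactly a finite set of pairwise disjoint nonseparating curves. Applying Lemma \ref{Int0} to $\phi$ gives that the underlying vertex bijection sends disjoint curves to disjoint curves; applying it to $\phi^{-1}$ gives the reverse implication, so disjointness is both preserved and reflected. Since a finite multicurve is determined by pairwise disjointness of its members, this means $\phi$ carries simplices of $\nons{S_{1}}$ to simplices of $\nons{S_{2}}$ and conversely, i.e. $\phi$ is simultaneously an isomorphism $\fun{\phi}{\nons{S_{1}}}{\nons{S_{2}}}$. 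Corollary \ref{CorIsoNSC} then yields immediately that $S_{1}$ is homeomorphic to $S_{2}$. (One could instead rerun the rigidity argument of Theorem \ref{S1homeoS2} directly for $\gs{}$, since Remark \ref{R:changing} guarantees that Lemmas \ref{pairofpants} and \ref{peripheralpairs} hold verbatim for the Schmutz graph, but the passage through Corollary \ref{CorIsoNSC} is shorter and reuses exactly what is needed.)

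Next I would fix a homeomorphism $\fun{\psi}{S_{1}}{S_{2}}$, whose existence is precisely the output of the previous step, and let $\fun{\psi_{*}}{\gs{S_{1}}}{\gs{S_{2}}}$ be the induced isomorphism $\psi_{*}([\alpha]) = [\psi(\alpha)]$. Then $\psi_{*}^{-1}\circ\phi$ is an automorphism of $\gs{S_{1}}$, and since $\Ends{S_{1}} = \Endsg{S_{1}}$, Theorem \ref{surGS} provides $g\in\MCG{S_{1}}$ with $g_{*} = \psi_{*}^{-1}\circ\phi$. Hence $\phi = \psi_{*}\circ g_{*} = (\psi\circ g)_{*}$, so $\phi$ is induced by the homeomorphism $\psi\circ g\colon S_{1}\to S_{2}$; equivalently, under the identification of $S_{1}$ with $S_{2}$ via $\psi$, the isomorphism $\phi$ is realized by the mapping class $g\in\MCG{S_{1}}$, which is the asserted conclusion.

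The substantive mathematics is thus entirely imported: rigidity from Corollary \ref{CorIsoNSC} and surjectivity from Theorem \ref{surGS}. The only delicate point, and the place I expect to spend the most care, is the first step, where one must justify that a Schmutz-graph isomorphism is automatically an isomorphism of the nonseparating curve complex. This hinges on Lemma \ref{Int0} (disjointness is preserved, applied to both $\phi$ and $\phi^{-1}$) together with the observation that simplices of $\nons{}$ are detected by pairwise disjointness; without this bridge the two otherwise independent theorems cannot be combined.
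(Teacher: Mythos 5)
Your proof is correct and follows essentially the same route as the paper: both arguments transfer $\phi$ to an isomorphism $\nons{S_{1}}\to\nons{S_{2}}$ by showing disjointness is preserved (you cite Lemma \ref{Int0} applied to $\phi$ and $\phi^{-1}$; the paper re-derives the same fact from Schmutz's lemmas, ruling out intersection number $\geq 2$ and $=1$ separately), then invoke Corollary \ref{CorIsoNSC} to get the homeomorphism and Theorem \ref{surGS} to produce the realizing mapping class. The only cosmetic difference is that you spell out the composition $\phi = (\psi\circ g)_{*}$ with an explicit identifying homeomorphism $\psi$, a step the paper compresses into ``the rest follows from theorems \ref{TeoISO} and \ref{surGS}.''
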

\begin{proof}
Every isomorphism $\fun{\phi}{\gs{S_{1}}}{\gs{S_{2}}}$ induces an isomorphism $\fun{\phi}{\mathcal{N}(S_{1})}{\mathcal{N}(S_{2})}$.  Indeed, take $u,v$ two curves such that $i(u,v)=0$. Suppose $i(\phi(u),\phi(v))\geq 2$ and remark that, as in the proof of theorem \ref{surGS}, lemmas 1, 3 and 5 in \cite{Sch} remain valid in the context of this corollary. Hence we obtain a contradiction. On the other hand it is clear that $i(\phi(u),\phi(v))\neq 1$, for $\phi$ is an isomorphism. Hence the only possibility left is that $i(\phi(u),\phi(v))=0$. By corollary \ref{CorIsoNSC} we obtain that $S_{1}$ is homemorphic to $S_{2}$. The rest of the proof follows from theorems \ref{TeoISO} and \ref{surGS}.
\end{proof}

\subsubsection{Proof of theorem \ref{surCS}.}
\label{pt8}
Any $\phi \in \Aut{\comp{S}}$ sends nonseparating curves to nonseparating curves, hence $\phi|_{\nons{S}} \in \Aut{\nons{S}}$ and then due to theorem \ref{surGS} there exists $h \in \MCG{S}$ such that $\phi|_{\nons{S}}(\alpha) = h(\alpha)$ for all $\alpha \in \mathcal{V}(\nons{S})$. Hence we only need to check that $\phi$ and $h$ coincide in the separating curves of $S$. Let $\alpha$ be a separating curve of $S$; we consider three cases.\\
\begin{enumerate}
 \item If both connected components of $S_{\alpha}$ have positive genus, then we can find a pants decomposition $P$ such that $\alpha \in P$, $(P \backslash \{\alpha\}) \subset \mathcal{V}(\nons{S})$ and $\deg(\alpha) = 4$ in $\adj{P}$; let $\beta_{1}$, $\gamma_{1}$, $\beta_{2}$ and $\gamma_{2}$ be the neighbours of $\alpha$ in $\adj{P}$ such that $\beta_{i}$ and $\gamma_{i}$ are in the same connected component of $S_{\alpha}$ for $i = 1,2$. Let also $\delta_{1}$ and $\delta_{2}$ be nonseparating curves such that $i(\alpha, \delta_{i}) = 0$ and $i(\beta_{i},\delta_{i}) = i(\gamma_{i},\delta_{i}) = 1$ for $i = 1,2$. See figure \ref{Catchs04} for an example.
 \begin{figure}
 \centering	
 \begin{tikzpicture}
\shadedraw[left color=white!35!black, right color=white!90!black,thick]
(-4,0) to[out=0,in=180] (4,0)
to[out=100,in=260] (4,3)
to[out=180,in=0] (-4,3)
to[out=260,in=100] (-4,0);
\draw[dashed] (-4,3) to[out=280,in=80] (-4,0);
\shadedraw[left color=white!35!black, right color=white!90!black,thick]
 (4,0)to[out=100,in=260] (4,3)
to[out=280,in=80] (4,0);
\filldraw[fill=white, very thick] (-2,1.5) ellipse (1 and .5);
\filldraw[fill=white, very thick] (2,1.5) ellipse (1 and .5);

\draw (-2,1) to[out=260,in=100] (-2,0);
\draw (2,1) to[out=260,in=100] (2,0);
\draw (-2,3) to[out=260,in=100] (-2,2);
\draw (2,3) to[out=260,in=100] (2,2);
\draw (0,3) to[out=260,in=100] (0,0);
\draw (-2,1.5) ellipse (1.5 and 1);
\draw (2,1.5) ellipse (1.5 and 1);
\draw (-2,-.3) node{$\gamma_{1}$};
\draw (2,-.3) node{$\gamma_{2}$};
\draw (-2,3.3) node{$\beta_{1}$};
\draw (2,3.3) node{$\beta_{2}$};
\draw (-.8,.5) node{$\delta_{1}$};
\draw (.8,.5) node{$\delta_{2}$};
\draw (0.1,1.5) node{$\alpha$};

 \end{tikzpicture}
 \caption{Catching $\alpha$ in a $S_{0,4}$.}
 \label{Catchs04}
 \end{figure}

  By construction and lemma \ref{pairofpants}, $\phi(\alpha)$ and $h(\alpha)$ are contained in the $S_{0,4}$ subsurface bounded by $\phi(\beta_{1})$, $\phi(\gamma_{1})$, $\phi(\beta_{2})$ and $\phi(\gamma_{2})$ (recall that $\phi(\beta_{i}) = h(\beta_{i})$ and $\phi(\gamma_{i}) = h(\gamma_{i})$ for $i = 1,2$ since they are nonseparating curves). Even more, since $i(\alpha,\delta_{i}) = 0$ for $i=1,2$ then $\phi(\alpha)$ and $h(\alpha)$ must be contained in the annulus formed by cutting the aforementioned $S_{0,4}$ subsurface along the arcs of $\phi(\delta_{i}) = h(\delta_{i})$ for $i=1,2$; therefore $\phi(\alpha) = h(\alpha)$.
 \item If $\alpha$ is an outer curve, then let $P$ be a pants decomposition such that the peripheral pairs of $P$ bounding the same boundary components as $\alpha$, are consecutive to one another (similar to the proof of lemma \ref{peripheralpairs}), and $\alpha$ intersects only one curve in $P$ (namely $\beta$); let also $\gamma$ be a nonseparating curve  that intersects each curve in the peripheral pairs bounding the same boundary component as $\alpha$ only once while being disjoint from $\alpha$. Figure \ref{s04bis} illustrates this situation.
 \begin{figure}
 \centering
 \begin{tikzpicture}
\shadedraw[left color=white!35!black, right color=white!90!black,thick]
(0,0) to[out=0,in=180] (4,0)
to[out=100,in=260] (4,3)
to[out=180,in=0] (0,3)
to[out=260,in=100] (0,2)
to[out=0,in=90] (.5,1.5)
to[out=270,in=0] (0,1)
to[out=260,in=100] (0,0);
\draw[dashed] (0,1) to[out=280,in=80] (0,0); 
\draw[dashed] (0,3) to[out=280,in=80] (0,2); 
\shadedraw[left color=white!35!black, right color=white!90!black,thick]
 (4,0)
to[out=100,in=260] (4,3)
to[out=280,in=80] (4,0);
\filldraw[fill=white] (2.5,1.5) circle (.5);

\draw (1.5,3) to[out=260,in=100] (1.5,0);
\draw (.5,1.5) to[out=10,in=170] (2,1.5);
\draw (2.5,1) to[out=260,in=100] (2.5,0);
\draw (2.5,3) to[out=260,in=100] (2.5,2);
\draw (2.5,1.5) ellipse (1 and 1);
\draw (.8,1.2) node{$\beta$};
\draw (1.2,2.5) node{$\alpha$};
\draw (3.2,2.5) node{$\gamma$};
 \end{tikzpicture}
\caption{Catching $\alpha$ again in a $S_{0,4}$.}
\label{s04bis}
 \end{figure}
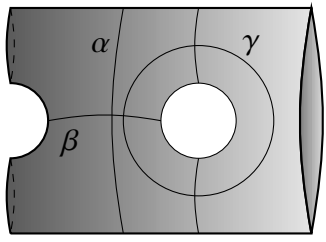
 Due to $\phi$ being an isomorphism, $\phi(\alpha)$ will intersect $\phi(\beta)$ and be disjoint of every other curve in $P$. Using that and lemma \ref{peripheralpairs}, we know that $\phi(\alpha)$ and $h(\alpha)$ are contained in the $S_{0,4}$ subsurface bounded by two boundary components of $S$ and the images of the adjacent curves in $\adj{P}$ of $\beta$; even more, $\phi(\alpha)$ and $h(\alpha)$ must be contained in the pair of pants resulting from cutting the aforementioned $S_{0,4}$ subsurface that contains them along the arc of $\phi(\gamma) = h(\gamma)$. Since there is only one curve in this pair of pants which is an essential curve of $S$, then $\phi(\alpha) = h(\alpha)$.
 \item Let $S_{1}$ and $S_{2}$ be the two connected components of $S_{\alpha}$ and suppose that $S_{1}$ has genus zero  and $n^{\prime} \geq 3$ boundary components. We can find the following: a finite sequence $\{\beta_{i}\}_{i=1}^{n^{\prime}-1}$ composed of outer curves, such that $i(\beta_{i}, \alpha) = 0$ for $i = 1,\ldots,n^{\prime}-1$, $i(\beta_{i},\beta_{i+1}) = 2$ for $i = 1, \ldots, n^{\prime} - 2$ and $i(\beta_{i},\beta_{j}) = 0$ for $j \notin \{i-1,i+1\}$; a pants decomposition $P$ (composed solely of nonseparating curves) of the infinite genus connected component of $S\backslash \{\alpha\}$; and finally, a curve $\gamma$ which intersects once the curves $\delta_{1}$ and $\delta_{2}$ forming the peripheral pair that bounds the boundary of $S_{2}$ induced by $\alpha$. Figure \ref{annulus} illustrates this situation.
\begin{figure}
\centering
\begin{tikzpicture}
\shadedraw[left color=white!35!black, right color=white!90!black, very thick]
(-2*.5,-4*.5) to[out=0,in=180] (6*.5,-4*.5)
to[out=100,in=260] (6*.5,4*.5)
to[out=180,in=0] (-2*.5,4*.5)
to[out=270,in=0] (-3*.5,3*.5)
to[out=0,in=90] (-2*.5,2.5*.5)
to[out=270,in=0] (-3*.5,2*.5)
to[out=280,in=80] (-3*.5,1*.5)
to[out=0,in=90] (-2*.5,0)
to[out=270,in=0] (-3*.5,-1*.5)
to[out=280,in=80] (-3*.5,-2*.5)
to[out=0,in=90] (-2*.5,-2.5*.5)
to[out=270,in=0] (-3*.5,-3*.5)
to[out=0,in=90] (-2*.5,-4*.5);

\shadedraw[left color=white!35!black, right color=white!90!black]
(6*.5,-4*.5)
to[out=100,in=260] (6*.5,4*.5) 
to[out=280,in=80] (6*.5,-4*.5); 

\shadedraw[right color=white!35!black, left color=white!90!black] 
(-2*.5,4*.5)
to[out=270,in=0] (-3*.5,3*.5)
to[out=90,in=180] (-2*.5,4*.5);

\shadedraw[right color=white!35!black, left color=white!90!black] 
(-3*.5,2*.5)
to[out=280,in=80] (-3*.5,1*.5)
to[out=120,in=240] (-3*.5,2*.5);

\shadedraw[right color=white!35!black, left color=white!90!black] 
 (-3*.5,-1*.5)
to[out=280,in=80] (-3*.5,-2*.5)
to[out=120,in=240] (-3*.5,-1*.5);

\shadedraw[right color=white!35!black, left color=white!90!black] 
 (-3*.5,-3*.5)
to[out=0,in=90] (-2*.5,-4*.5)
to[out=180,in=270] (-3*.5,-3*.5);

\filldraw[fill=white,very thick] (1.5,0) circle (.25);


\draw (-.5,2) to[out=270,in=45] (-1.03,.1);
\draw (-.5,-2) to[out=90,in=315] (-1.03,-.1);
\draw (-1,1.25) to[out=315,in=45] (-1,-1.25);
\draw (.5,2) to[out=260,in=100] (.5,-2);
\draw (1.5,2) to[out=260,in=100] (1.5,.25);
\draw (1.5,-.25) to[out=260,in=100] (1.5,-2);
\draw (1.5,0) ellipse (.5 and 1);
\draw(1*.5,0) node{$\alpha$};
\draw(2.2,0) node{$\gamma$};
\draw(1.8,1.5) node{$\delta_{1}$};
\draw(1.8,-1.5) node{$\delta_{2}$};
\draw(-.2,1.5) node{$\beta_{1}$};
\draw(-.25,0) node{$\beta_{2}$};
\draw(-.2,-1.5) node{$\beta_{3}$};
\end{tikzpicture}
\caption{Catching $\alpha$ in an annulus.}
\label{annulus}
\end{figure}
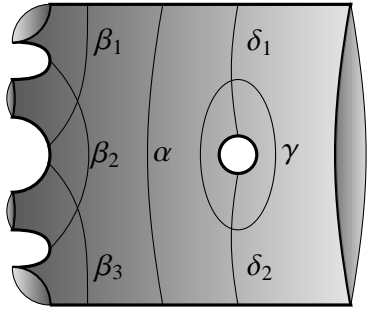

 Given that isomorphism of $\comp{S}$ send outer curves to outer curves, part $(2)$ of this proof, the fact that $\phi(\alpha)$ and $h(\alpha)$ must both be essential curves and they must be different from every element of $\phi(\{\beta_{i}\}_{i=1}^{n^{\prime}}) \cup \phi(P) \cup \{\phi(\gamma)\}$; we can conclude that $\phi(\alpha)$ and $h(\alpha)$ must be contained in the annulus obtained by cutting $S$ along $\phi( \{\beta_{i} \}_{i=1}^{n^{\prime}}) \cup \phi(P) \cup \{\phi(\gamma)\}$. The boundary components of this annulus are formed by arcs of $\phi(\beta_{i})$ for $i=1,\ldots,n^{\prime}-1$, $\phi(\gamma)$, $\phi(\delta_{1})$ and $\phi(\delta_{2})$. Therefore $\phi(\alpha)=h(\alpha)$.
\end{enumerate}
\subsubsection{Proof of theorem \ref{TeoISO}.}
 Theorems \ref{injCS} and \ref{surCS} imply that $\Psi_{\comp{S}}$ is an isomorphism. From theorem \ref{surGS} we know that the natural  map:
\begin{equation}
\Psi_{\nons{S}}:{\rm MCG}^{*}(S)\to {\rm Aut}(\mathcal{N}(S))
\end{equation}
is surjective. Let us suppose $h_{1}, h_{2} \in \MCG{S}$ are such that $h_{1} \neq h_{2}$ and $\Psi_{\nons{S}}(h_{1}) = \Psi_{\nons{S}}(h_{2})$. Then since $\Psi_{\comp{S}}$ is injective we have that $\Psi_{\comp{S}}(h_{1}) \neq \Psi_{\comp{S}}(h_{2})$ even though their restrictions to $\nons{S}$ are the same. This implies that $\Psi_{\comp{S}}(h_{1})$ and $\Psi_{\comp{S}}(h_{2})$ differ in some separating curves. But given that the restrictions of $\Psi_{\comp{S}}(h_{1})$ and $\Psi_{\comp{S}}(h_{2})$ to $\nons{S}$ are the same, we can use the same technique as in the proof of theorem \ref{surCS}, for catching the separating curves in an annulus (or a pair of pants), which means $\Psi_{\comp{S}}(h_{1})(\alpha) = \Psi_{\comp{S}}(h_{2})(\alpha)$ for every separating curve $\alpha$. Thus we have reached a contradiction and therefore $\Psi_{\nons{S}}$ is injective, hence it is an isomorphism. We finish the proof by remarking that $\Psi_{\gs{S}}=\Phi\circ\Psi_{\nons{S}}$, where $\Phi$ is the isomorphism between $\Aut{\nons{S}}$ and $\Aut{\gs{S}}$  defined in (\ref{E:fiso}). \qed


\begin{remark}
Using theorem \ref{TeoISO} we can deduce that, for an infinite genus surface $S$ such that $\Ends{S}=\Endsg{S}$, every automorphism $\varphi$ of ${\rm MCG}^{*}(S)$ sending Dehn twists to Dehn twist must be an inner automorphism. The proof of this fact is taken verbatim from the proof of theorem 2, in \cite{Ivanov}. However, it is still unknown if, as in the compact case, every automorphism of ${\rm MCG}^{*}(S)$ sends Dehn twists to Dehn twists. 
\end{remark}

\section{Counterexamples}
 \label{S:CFE}
 In this section we show that theorem \ref{S1homeoS2} is not valid if the morphism between curve complexes is not an isomorphism. For that, let us first recall the notion of \emph{superinjective map}.
\begin{Def}[\sc Superinjectivity]
	\label{D:super}
 A simplicial map $\fun{f}{\comp{S_{1}}}{\comp{S_{2}}}$ is called superinjective if for any two vertices $\alpha$ and $\beta$  in $\comp{S_{1}}$ such that $i(\alpha,\beta)\neq 0$ we have that $i(f(\alpha),f(\beta))\neq 0$. 
\end{Def}

Every superinjective map is injective. For compact surfaces, we have the following theorem concerning superinjective maps.
\begin{Teo}\cite{Irmak}
Let $S$ be a closed, connected, orientable surface of genus at least 3. A simplicial map, $f:\comp{S}\to\comp{S}$, is superinjective if and only if $f$ is induced by an homeomorphism of $S$.
\end{Teo}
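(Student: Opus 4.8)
The forward direction is immediate: a homeomorphism $\fun{h}{S}{S}$ induces a simplicial automorphism $h_{*}$ of $\comp{S}$ that preserves geometric intersection numbers, so $i(\alpha,\beta)\neq 0$ forces $i(h_{*}\alpha,h_{*}\beta)\neq 0$ and $h_{*}$ is superinjective. The whole content lies in the converse, and the plan is to reconstruct a homeomorphism from the combinatorial data, following the scheme Ivanov introduced in \cite{Ivanov} for automorphisms but carrying every step out with superinjectivity in place of surjectivity. The first observation I would record is purely formal: being simplicial, $f$ sends disjoint curves to disjoint curves, while superinjectivity (which entails injectivity) sends nondisjoint curves to nondisjoint curves; hence $i(\alpha,\beta)=0$ if and only if $i(f(\alpha),f(\beta))=0$. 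Consequently $f$ carries a $k$-simplex to a $k$-simplex, and since a pants decomposition is a multicurve of the maximal cardinality $3g-3$, the image $f(P)$ of any pants decomposition $P$ is again a pants decomposition.

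Next I would establish that $f$ preserves the topological type of each curve. The link of a vertex $c$ in $\comp{S}$ is the full subcomplex spanned by the curves disjoint from $c$, which is naturally identified with $\comp{S_{c}}$; because disjointness is preserved in both directions, $f$ restricts to a superinjective map of this link into the link of $f(c)$. A curve $c$ is separating exactly when $\comp{S_{c}}$ splits as a nontrivial join, the factors recording the genera of the two complementary pieces, and this join structure is a disjointness-theoretic invariant. Arguing by induction on the complexity $3g-3$, and using that the hypothesis $g\geq 3$ keeps every cut subsurface large enough to avoid low-complexity ambiguities, I would deduce that $f(c)$ has the same topological type as $c$. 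With type preservation in hand, the configurations detectable through disjointness alone are also preserved: three curves bounding a pair of pants, the adjacency of two curves of a pants decomposition in the associated graph $\adj{P}$, and the peripheral structure, exactly in the spirit of lemmas \ref{pairofpants} and \ref{peripheralpairs}.

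The remaining combinatorial ingredient is the detection of intersection number one: two nonseparating curves $\alpha,\beta$ satisfy $i(\alpha,\beta)=1$ precisely when they fill a one-holed torus, a condition I would phrase through the disjointness pattern of the curves disjoint from both $\alpha$ and $\beta$ together with their unique common boundary. Granting this, I would fix a pants decomposition $P$ together with a transversal system realizing Dehn--Thurston coordinates, and then build a homeomorphism $h$ with $h(c)=f(c)$ for every curve $c$ of this finite filling system by the Alexander method (lemma \ref{homotopycurves}; see also \cite{Farb}). Finally I would conclude that $f=h_{*}$ on all of $\comp{S}$: both $f$ and $h_{*}$ preserve disjointness in both directions and agree on the coordinate system, and a curve is determined up to isotopy by its Dehn--Thurston coordinates.

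I expect the main obstacle to be precisely the two steps that, for an automorphism, come almost for free from surjectivity: the preservation of topological type and the combinatorial detection of intersection number one. Because $f$ need not be onto, one cannot simply assert that $f$ induces an isomorphism of links, and must instead run the induction on complexity with care, controlling at each stage that a superinjective image of a join is again a join and that the filling configuration characterizing $i(\alpha,\beta)=1$ is reflected faithfully. Once these are secured, the passage to a homeomorphism via Dehn--Thurston coordinates and the Alexander method is routine.
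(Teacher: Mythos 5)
A point of provenance first: the paper does not prove this statement at all --- it is quoted from \cite{Irmak} as a known compact-case theorem, stated only so that the counterexamples of \S\ref{S:CFE} (lemma \ref{L:CE1}) can show it fails in infinite genus. So your proposal has to be measured against Irmak's argument and against the closely analogous machinery this paper builds for its own theorems (lemmas \ref{pairofpants}, \ref{peripheralpairs}, \ref{Intn}, the Dehn--Thurston system plus the Alexander method). Against that standard, your scheme is the right one in outline, but it contains two genuine gaps, and the one you dismiss as ``routine'' is in fact the crux.

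The decisive gap is the final step. A Dehn--Thurston system $D$ determines a curve through its geometric intersection \emph{numbers} with the members of $D$, not through its disjointness pattern; so to conclude $f = h_{*}$ from agreement on $D$ you must show $i(f(\delta),f(\varepsilon)) = i(\delta,\varepsilon)$ for every curve $\delta$ and every $\varepsilon \in D$. This is exactly what lemma \ref{Intn} provides in the paper, and its proof (following Schmutz) is an induction via triples that yields only the one-sided estimate $i(\phi(\alpha),\phi(\beta)) \leq i(\alpha,\beta)$ from the map itself; the reverse inequality is obtained by running the same argument on $\phi^{-1}$. A superinjective map need not be surjective --- lemma \ref{L:CE1} constructs non-surjective ones --- so there is no inverse to invoke, and nothing in your sketch rules out that intersection numbers with the coordinate curves strictly decrease under $f$. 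Bridging precisely this asymmetry, without invertibility, is the technical core of Irmak's proof; declaring it routine is where your argument stops being a proof. The second gap is your characterization of intersection number one: ``$\alpha,\beta$ fill a one-holed torus'' does not imply $i(\alpha,\beta)=1$. The curves of slope $1/0$ and $1/2$ in a one-holed torus fill it and intersect twice; what characterizes $i(\alpha,\beta)=1$ is that a regular neighbourhood of $\alpha\cup\beta$ is a torus with \emph{one} boundary component, and rendering that condition visible from disjointness data alone is itself a nontrivial configuration argument --- it is exactly the part of \cite{Irmak} that this paper invokes in the proof of theorem \ref{Teo6}. (Your join-theoretic detection of separating curves also needs repair, since the image of a join under a non-surjective map sitting inside a non-join link is not by itself a contradiction; a counting argument with pants decompositions fixes it, and you at least flag this step as delicate.)
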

The following lemma shows that this result is not true for a large class of surfaces of infinite genus and, in this sense, theorem \ref{S1homeoS2} is optimal. 
\begin{Lema}
	\label{L:CE1}
Let $S$ be a surface such that $\Endsg{S}\neq\emptyset$. Then there exist a simplicial superinjective map $f:\comp{S}\to\comp{S}$ which is not surjective. 
\end{Lema}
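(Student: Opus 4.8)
The plan is to exhibit $f$ as the simplicial map induced by a homeomorphism of $S$ onto a proper, incompressible subsurface of itself. Concretely, I would use the hypothesis $\Endsg{S}\neq\emptyset$ to produce an essential subsurface $\Sigma\subsetneq S$ together with a homeomorphism $\fun{\phi}{S}{\Sigma}$ such that (i) every boundary curve of $\Sigma$ is essential in $S$, so that $\Sigma$ is incompressible, and (ii) $S\setminus\Sigma$ contains a nonseparating essential curve $\gamma$ of $S$. Granting this, one defines $\fun{f}{\comp{S}}{\comp{S}}$ by $f([\alpha]) = [\phi(\alpha)]$; that is, $f=\iota_{*}\circ\phi_{*}$, where $\iota\colon\Sigma\hookrightarrow S$ is the inclusion.

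To build $\Sigma$ I would pick a non-planar end $\xi\in\Endsg{S}$ and a separating curve bounding a neighbourhood $V$ of $\xi$ that is an infinite genus surface with the single end $\xi$. The infinite genus accumulating at $\xi$ is exactly what makes $V$ \emph{self-similar}: $V$ contains a homeomorphic copy of itself obtained by discarding one handle and sliding the remaining genus one step towards $\xi$. Carrying this shift out on $V$ and matching it with a reparametrization of the rest of $S$ yields $\fun{\phi}{S}{\Sigma}$ whose image omits a one-holed torus $T\subset S$; the core of $T$ is the desired nonseparating curve $\gamma$. Theorem \ref{T:NCSclass} is what certifies $\Sigma\cong S$ from the genus, boundary and end data. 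It is precisely here that $\Endsg{S}\neq\emptyset$ is indispensable: a planar end carries no genus and admits no such shift, which is why the statement is sharp and why the rigidity of Theorem \ref{S1homeoS2} does not propagate to merely injective maps.

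It then remains to verify the three properties. Simpliciality is immediate, since $\phi$ is a homeomorphism and $\iota$ an embedding, so pairwise disjoint representatives map to pairwise disjoint representatives and finite multicurves to finite multicurves. For superinjectivity (Definition \ref{D:super}) the point is that $\Sigma$ is incompressible, so representatives of $\phi(\alpha)$ and $\phi(\beta)$ placed in minimal position inside $\Sigma$ remain in minimal position in $S$; consequently $i(\phi(\alpha),\phi(\beta)) = i(\alpha,\beta)$, and in particular $i(\alpha,\beta)\neq 0$ forces $i(f(\alpha),f(\beta))\neq 0$, which simultaneously re-proves injectivity. Finally, $f$ is not surjective because $\gamma\subset S\setminus\Sigma$ is essential in $S$ yet cannot be homotoped into the incompressible subsurface $\Sigma$, so $\gamma$ lies outside the image of $f$.

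The main obstacle is the topological construction of $\Sigma$ and $\phi$: one must discard a handle (to leave room for $\gamma$ in the complement) while \emph{preserving} the homeomorphism type of $S$, which forces the vacated genus to be absorbed into the infinite genus end $\xi$. Making the shift on a neighbourhood of $\xi$ agree continuously with a reparametrization of the remainder of $S$, so that $\phi$ is a genuine homeomorphism onto an incompressible $\Sigma$ with essential complement, is the delicate heart of the argument. Once this is in place, the preservation of geometric intersection numbers under the incompressible inclusion (hence superinjectivity) and the non-surjectivity are routine.
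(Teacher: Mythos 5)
Your construction of $(\Sigma,\phi)$ cannot exist in general, and the failure is exactly detected by the classification theorem you invoke. The lemma allows $\partial S=\emptyset$ (e.g.\ the Loch Ness monster, which is precisely the surface used in the paper's next lemma): in that case any subsurface $\Sigma\subsetneq S$ of the kind you describe has at least one boundary curve, namely its frontier curves in $S$, which by your condition (i) are essential curves of $S$. But $S$ itself has no boundary, so by Theorem \ref{T:NCSclass} such a $\Sigma$ is \emph{never} homeomorphic to $S$; equivalently, a closed subsurface with empty frontier is clopen, hence equal to $S$. The same counting problem appears in your ``shift'' construction: after discarding a handle from $V$ and re-gluing, the image picks up the frontier curve $\delta'$ as an extra boundary component, and nothing compensates for it unless $\partial S\neq\emptyset$ (when $\partial S\neq\emptyset$ one can indeed hide $\partial S$ in the complement, and your argument then works, with the superinjectivity and non-surjectivity steps being fine). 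There is also a smaller, fixable inaccuracy: a non-planar end $\xi$ need not admit a neighbourhood $V$ with $\xi$ as its \emph{single} end (take $\Ends{S}$ a Cantor set); one should only require $V$ to have infinite genus, which suffices for the classification argument.

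The paper avoids this obstruction by never trying to embed $S$ into itself. Instead it cuts $S$ along a separating curve $\alpha$ into $S_{1}\cup S_{2}$ and \emph{inserts} a copy of $S_{1,2}$, obtaining a surface $S'=S_{1}\cup S_{1,2}\cup S_{2}$ which is homeomorphic to $S$ by the classification theorem (here $\Endsg{S}\neq\emptyset$ guarantees infinite genus, so the extra handle changes nothing). The map $f\colon\comp{S}\to\comp{S'}\cong\comp{S}$ is then defined directly on isotopy classes: curves disjoint from $\alpha$ are included via $S_{i}\hookrightarrow S'$, while curves crossing $\alpha$ are rerouted across the inserted piece so as to avoid its handle; no curve properly contained in the inserted $S_{1,2}$ is in the image. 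This combinatorial ``insert a handle'' definition works uniformly whether or not $\partial S=\emptyset$, which is the case your ambient self-embedding approach cannot reach without replacing compact frontier curves by non-compact frontier (a genuinely different and more delicate argument).
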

\begin{proof}
This proof makes reference to figure \ref{F:elem2}.
Let $\alpha \in \mathcal{V}(\comp{S})$ be a separating curve. Without loss of generality we can think that $\alpha$ is contained in a subsurface $S_{i}$ in $[S_{1}\supseteq S_{2}\supseteq\ldots]\in\Endsg{S}$ where $i$ is large enough.
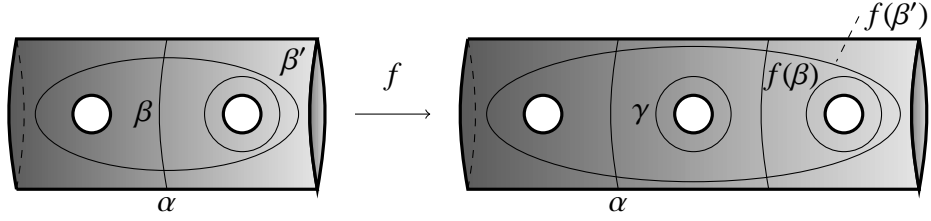
\begin{figure}
\centering
\begin{tikzpicture}

\shadedraw[left color=white!35!black, right color=white!90!black, very thick]
(-5,-1) to[out=0,in=180] (-1,-1)
to[out=100,in=260] (-1,1)
to[out=180,in=0] (-5,1)
to[out=260,in=100] (-5,-1);
\draw[dashed] (-5,-1) to[out=80,in=280] (-5,1);

\shadedraw[left color=white!35!black, right color=white!90!black, very thick]
(-1,1) to[out=260,in=100] (-1,-1)
to[out=80,in=280] (-1,1);

\shadedraw[left color=white!35!black, right color=white!90!black, very thick]
(1,1) to[out=0,in=180] (7,1)
to[out=260,in=100] (7,-1)
to[out=180,in=0] (1,-1)
to[out=100,in=260] (1,1);
\draw[dashed] (1,-1) to[out=80,in=280] (1,1);
\shadedraw[left color=white!35!black, right color=white!90!black, very thick]
(7,1) to[out=260,in=100] (7,-1)
to[out=80,in=280] (7,1);


\filldraw[fill=white, very thick] (-4,0) circle (.25);
\filldraw[fill=white, very thick] (-2,0) circle (.25);

\filldraw[fill=white, very thick] (2,0) circle (.25);
\filldraw[fill=white, very thick] (4,0) circle (.25);
\filldraw[fill=white, very thick] (6,0) circle (.25);



\draw (-3,1) to[out=260,in=100] (-3,-1);
\draw (-2,0) ellipse (.5 and .5);
\draw (-3,0) ellipse (1.75 and .75);

\draw (3,1) to[out=260,in=100] (3,-1);
\draw (5,1) to[out=260,in=100] (5,-1);
\draw (4,0) ellipse (.5 and .5);
\draw (6,0) ellipse (.5 and .5);
\draw (4,0) ellipse (2.75 and .9);

\draw[->] (-.5,0)--(.5,0);



\draw(-3,-1.2) node{$\alpha$};
\draw(-3.3,0) node{$\beta$};
\draw(-1.3,0.7) node{$\beta'$};


\draw(3,-1.2) node{$\alpha$};
\draw(3.3,0) node{$\gamma$};
\draw(5.3,0.5) node{$f(\beta)$};
\draw(6.7,1.3) node{$f(\beta')$};
\draw[dashed] (6.2,1.3) -- (5.9,0.7);
\draw(0,0.5) node{$f$};
\end{tikzpicture}
\caption{A superinjective but not surjective simplicial map.}
\label{F:elem2}
\end{figure}
	We describe $f$ topologically. Let $S_{1}$ and $S_{2}$ be the two connected components of $S_{\alpha}$. Cut  $S$ along $\alpha$ and then glue in a copy of $S_{1,2}$. This operation produces a new surface $S'=S_{1}\cup S_{2}\cup S_{1,2}$. Remark that $S$ is homeomorphic to $S'$ and that there is a natural inclusion map $f_{i}:S_{i}\hookrightarrow S'$, for $i=1,2$. If $\beta \in \mathcal{V}(\comp{S_{i}})$, then we define $f(\beta)=f_{i}(\beta)$ for $i=1,2$.  On the other hand, if $\beta'$ intersects the curve $\alpha$ we define $f(\beta')$ as depicted in figure  \ref{F:elem2}. Clearly, $f$ is superinjecive but no essential curve properly contained in the copy of $S_{1,2}$ that we introduced is  in the image of $f$. Hence $f$ is not surjective and, in particular, $f$ cannot be induced by a class in ${\rm MCG}^{*}(S)$.
\end{proof}
We think that this result can be optimized in the following way.
\begin{Conj}
Let S be a surface such that $ \Endsg{S} \neq \emptyset$ and $\{\alpha_{1}, \ldots,\alpha_{n}\} \subset \comp{S}$ be simplex. Then there exists a  simplicial superinjective map $f:\comp{S}\to\comp{S}$ whose image does not intersect $\{\alpha_{1},\ldots,\alpha_{n}\}$.
\end{Conj}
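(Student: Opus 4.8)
The plan is to generalize the cut-and-insert surgery of Lemma \ref{L:CE1} so that, instead of trapping the essential curves of a single inserted copy of $S_{1,2}$, we trap all of $\alpha_1,\dots,\alpha_n$ inside one inserted compact subsurface. The crucial observation extracted from the proof of Lemma \ref{L:CE1} is that the superinjective map constructed there misses \emph{every} curve that is essential and non-peripheral in the inserted piece: such a curve is not isotopic into the complementary pieces (its separating boundary blocks any such isotopy), nor isotopic to a re-routed curve (re-routed curves necessarily leave the inserted piece). So it suffices to realize $\{\alpha_1,\dots,\alpha_n\}$ as essential non-peripheral curves of a compact subsurface that can be inserted without changing the homeomorphism type of $S$.

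Concretely, I would first put the $\alpha_i$ in pairwise minimal position; since $\{\alpha_1,\dots,\alpha_n\}$ is a simplex, their union is a multicurve $M$. Using that $S$ has infinite genus, I would choose a compact connected incompressible subsurface $\Sigma\subset S$ whose boundary $\partial\Sigma$ consists of essential curves of $S$ disjoint from $\partial S$ (and even in number), such that $M\subset\Sigma$ with each $\alpha_i$ essential and \emph{non-peripheral} in $\Sigma$; this can be arranged by fattening $M$, joining the resulting annuli by bands, and adding a little extra genus. Let $T$ be the surface obtained by removing the interior $\mathring{\Sigma}$ and regluing the resulting boundary curves in pairs. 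Because $\Sigma$ has finite genus, $T$ still has infinite genus; because $\partial\Sigma$ avoids $\partial S$, the regluing leaves $\partial S$ untouched, so $T$ has the same number of boundary components as $S$; and because $\Sigma$ is compact, the operation changes neither $\Ends{S}$ nor which ends carry genus, so $\Endsg{T}\subset\Ends{T}$ is homeomorphic, as a nested pair, to $\Endsg{S}\subset\Ends{S}$. By the classification Theorem \ref{T:NCSclass}, $T\cong S$; fix such a homeomorphism $\rho$.

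Exactly as in Lemma \ref{L:CE1}, the decomposition $S=(S\setminus\mathring{\Sigma})\cup\Sigma$ yields a superinjective simplicial map $j:\comp{T}\to\comp{S}$, defined by the inclusions of the complementary pieces and by re-routing the curves crossing the regluing locus through $\Sigma$; superinjectivity holds because these incompressible inclusions preserve disjointness and geometric intersection. Setting $f:=j\circ\rho_{*}:\comp{S}\to\comp{S}$ gives a superinjective map whose image equals the image of $j$, namely all classes isotopic into $S\setminus\mathring{\Sigma}$ together with the re-routed classes. By the observation above, no curve that is non-peripheral in $\Sigma$ lies in this image; in particular $\alpha_1,\dots,\alpha_n\notin\mathrm{im}(f)$, as desired.

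The hard part is constructing $\Sigma$ in full generality, and this is exactly why the statement is only a conjecture. The difficulty is that an \emph{outer} separating curve $\alpha_i$ cuts off a pair of pants two of whose boundary curves belong to $\partial S$; such an $\alpha_i$ cannot be made non-peripheral in any compact incompressible $\Sigma$ whose boundary avoids $\partial S$, since the only essential curves on its pair-of-pants side are peripheral. Handling outer curves therefore requires a surgery allowed to swallow boundary components of $S$ and then reintroduce the same number after regluing, all while keeping the nested end space $\Endsg{S}\subset\Ends{S}$ unchanged and still trapping every $\alpha_i$ in one operation. Making these competing bookkeeping constraints — infinite genus, boundary count, the nested end space, and simultaneous non-peripherality of all the $\alpha_i$ — compatible for an arbitrary simplex is the main obstacle that a complete proof must resolve.
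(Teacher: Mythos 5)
The first thing to note is that the paper offers no proof of this statement: it appears as a Conjecture, immediately after Lemma \ref{L:CE1}, and is explicitly left open, so there is no proof of record to compare yours against; the only question is whether your proposal settles the conjecture, and it does not --- by your own admission. Your strategy (trap the multicurve $\alpha_{1},\ldots,\alpha_{n}$ non-peripherally in a compact essential subsurface $\Sigma$ with $\partial\Sigma\cap\partial S=\emptyset$, excise $\mathring{\Sigma}$ and reglue to obtain $T\cong S$, then transport the inclusion-plus-rerouting map of Lemma \ref{L:CE1}) is the natural generalization of the paper's counterexample machinery, and the pieces you do justify are sound: compactly supported surgery preserves the nested end space, Theorem \ref{T:NCSclass} then gives $T\cong S$, and a curve that is essential and non-peripheral in an essential subsurface cannot be isotoped into its complement. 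But the construction of $\Sigma$ fails exactly when some $\alpha_{i}$ is an outer separating curve, as you yourself observe: such a curve can never be made non-peripheral in a compact essential $\Sigma$ disjoint from $\partial S$, and the fix you gesture at (a surgery allowed to swallow boundary components of $S$ and reintroduce them afterwards) is never carried out. Since you call this ``the main obstacle that a complete proof must resolve,'' what you have is an argument for a special case (no outer curves among the $\alpha_{i}$, modulo the next point) together with a correct diagnosis of where the general case sticks --- not a proof of the conjecture.

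There is a second gap you should not gloss over even in the non-outer case: the claim that the rerouted map $j:\comp{T}\to\comp{S}$ is well defined, simplicial and superinjective. In Lemma \ref{L:CE1} the regluing locus is a single separating curve and the rerouting is prescribed by a figure; in your setting the regluing locus is a multicurve, and a single curve of $T$ may cross several of its components, each many times. One must fix a system of disjoint essential annuli in $\Sigma$ joining the paired boundary curves, chosen so that their complement in $\Sigma$ still contains every $\alpha_{i}$ essentially and non-peripherally, route each crossing through parallel copies of the cores of these annuli, and then actually verify that disjoint curves stay disjoint, intersecting curves stay intersecting, and non-isotopic curves stay non-isotopic. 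Your one-line justification (``incompressible inclusions preserve disjointness and geometric intersection'') covers only curves avoiding the regluing locus; it says nothing about pairs of rerouted curves sharing an annulus, which is where the real work lies. Both gaps look fillable with effort, but as written your proposal is an honest strategy outline --- consistent with the authors' decision to state this as a conjecture rather than a lemma --- and not a complete proof.
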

The following result shows that the statement of theorem \ref{S1homeoS2} is not valid for superinjective maps.


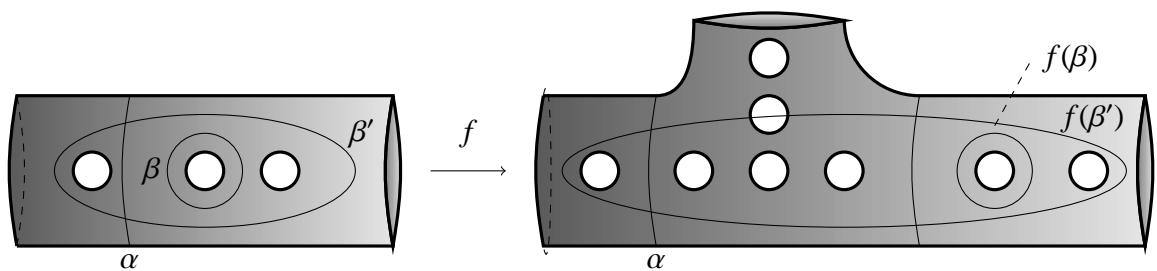
\begin{figure}
\centering
\begin{tikzpicture}
\shadedraw[left color=white!35!black, right color=white!90!black, very thick]
(-6,-1) to[out=0,in=180] (-1,-1)
to[out=100,in=260] (-1,1)
to[out=180,in=0] (-6,1)
to[out=260,in=100] (-6,-1);
\draw[dashed] (-6,-1) to[out=80,in=280] (-6,1);

\shadedraw[left color=white!35!black, right color=white!90!black, very thick]
(-1,1) to[out=260,in=100] (-1,-1)
to[out=80,in=280] (-1,1);


\shadedraw[left color=white!35!black, right color=white!90!black, very thick]
(1,1) to[out=0,in=180] (2.5,1)
to[out=0,in=270] (3,2)
to[out=350,in=190] (5,2)
to[out=270,in=180] (6,1)
to[out=0,in=180] (9,1)
to[out=260,in=100] (9,-1)
to[out=180,in=0] (1,-1)
to[out=100,in=260] (1,1);
\draw[dashed] (1,1) to[out=80,in=280] (1,-1);
\shadedraw[left color=white!35!black, right color=white!90!black, very thick]
(9,1) to[out=260,in=100] (9,-1)
to[out=80,in=280] (9,1);
\shadedraw[bottom color=white!35!black, top color=white!90!black, very thick]
(3,2) to[out=10,in=170] (5,2)
to[out=190,in=350] (3,2);



\filldraw[fill=white, very thick] (-5,0) circle (.25);
\filldraw[fill=white, very thick] (-3.5,0) circle (.25);
\filldraw[fill=white, very thick] (-2.5,0) circle (.25);


\filldraw[fill=white, very thick] (1.75,0) circle (.25);
\filldraw[fill=white, very thick] (3,0) circle (.25);
\filldraw[fill=white, very thick] (4,0) circle (.25);
\filldraw[fill=white, very thick] (5,0) circle (.25);

\filldraw[fill=white, very thick] (4,.75) circle (.25);
\filldraw[fill=white, very thick] (4,1.5) circle (.25);

\filldraw[fill=white, very thick] (7,0) circle (.25);
\filldraw[fill=white, very thick] (8.25,0) circle (.25);



\draw (-4.5,1) to[out=260,in=100] (-4.5,-1);

\draw (-3.5,0) ellipse (.5 and .5);

\draw (-3.5,0) ellipse (2 and .75);

\draw (2.5,1) to[out=260,in=100] (2.5,-1);
\draw (6,1) to[out=260,in=100] (6,-1);

\draw (7,0) ellipse (.5 and .5);
\draw (5,0) ellipse (3.75 and .75);



\draw (-4.5, -1.2) node{$\alpha$};
\draw (-4.2, 0) node{$\beta$};
\draw (-1.4, .5) node{$\beta'$};


\draw (2.5, -1.2) node{$\alpha$};
\draw (8.3, .7) node{$f(\beta')$};
\draw[dashed] (7,.6)--(7.5,1.5);

\draw (8, 1.5) node{$f(\beta)$};
\draw(0,0.5) node{$f$};
\draw[->] (-.5,0)--(.5,0);

\end{tikzpicture}
\caption{A superinjecive map between two nonhomeomorphic surfaces.}
\label{s04}
\end{figure}

\begin{Lema}
There exist uncountably many examples of pairs of nonhomeomorphic infinite genus surfaces $S_{1}$ and $S_{2}$ for which there exists a superinjective map $f:\comp{S_{1}}\to\comp{S_{2}}$.
\end{Lema}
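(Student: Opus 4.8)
The plan is to generalize the cut-and-insert construction from the proof of Lemma \ref{L:CE1}, replacing the inserted copy of $S_{1,2}$ by a \emph{noncompact} piece whose ends genuinely enlarge the space of ends, and then to let this piece range over an uncountable family of homeomorphism types. For definiteness I would fix $S_{1}$ to be the Loch Ness monster, so that $\Ends{S_{1}}$ is a single point, and choose a separating curve $\alpha\in\mathcal{V}(\comp{S_{1}})$ cutting $S_{1}$ into a finite genus piece $A$ (compact, one boundary) and an infinite genus piece $B$ carrying the end. Given any connected surface $W$ with exactly two boundary components, I form $S_{2}:=A\cup W\cup B$ by gluing the two boundary circles of $W$ to the two copies of $\alpha$ created by cutting. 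Exactly as in Figure \ref{s04}, the inclusions $A\hookrightarrow S_{2}$ and $B\hookrightarrow S_{2}$, together with a fixed routing through $W$ of those curves of $S_{1}$ that meet $\alpha$, define a simplicial map $f:\comp{S_{1}}\to\comp{S_{2}}$.

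First I would check that $f$ is superinjective, which follows verbatim from the argument of Lemma \ref{L:CE1}: two curves disjoint from $\alpha$ keep their intersection pattern under the inclusions; a curve disjoint from $\alpha$ and one crossing it retain their intersections, since the routing of the latter agrees with its original trace away from $W$; and two curves both crossing $\alpha$ continue to meet, because the chosen routing through $W$ does not cancel the crossings already forced inside $A\cup B$. Hence $i(\beta,\beta')\neq 0$ implies $i(f(\beta),f(\beta'))\neq 0$, so $f$ is superinjective in the sense of Definition \ref{D:super}. Next I would arrange $S_{1}\not\cong S_{2}$. Since $\alpha$ is compact and $W$ is attached along it, the ends of $W$ form a clopen subset of $\Ends{S_{2}}$, yielding a homeomorphism of end spaces $\Ends{S_{2}}\cong\Ends{S_{1}}\sqcup\Ends{W}$ that respects the genus-carrying subspaces. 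Choosing $W$ noncompact with $\Ends{W}\neq\emptyset$ therefore makes $\Ends{S_{2}}$ non-homeomorphic to the one-point space $\Ends{S_{1}}$, so by the classification Theorem \ref{T:NCSclass} the surfaces $S_{1}$ and $S_{2}$ are not homeomorphic.

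Finally I would produce uncountably many pairs by varying $W$. It suffices to realize uncountably many pairwise non-homeomorphic end spaces $\Ends{W}$, for then the resulting surfaces $S_{2}$ are pairwise non-homeomorphic and the pairs $(S_{1},S_{2})$ are all distinct. Every compact, metrizable, totally disconnected space occurs as the space of ends of some surface $W$ with two boundary components, and there are uncountably many homeomorphism types of such spaces: for instance the countable compacta $\omega^{\xi}+1$ for $\xi<\omega_{1}$, which are pairwise non-homeomorphic by their Cantor--Bendixson rank. Taking the corresponding pieces $W_{\xi}$ gives an uncountable family of non-homeomorphic $S_{2}^{(\xi)}$, each admitting a superinjective map $\comp{S_{1}}\to\comp{S_{2}^{(\xi)}}$, which proves the lemma. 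The main obstacle is the superinjectivity verification: one must ensure that routing the curves meeting $\alpha$ through the noncompact piece $W$ never destroys a forced intersection, which I would guarantee by fixing the routing once and for all — say so that each such curve meets $W$ in a single embedded arc joining the two glued boundary circles — and then checking the three cases above; the end-space bookkeeping and the uncountability count are then routine.
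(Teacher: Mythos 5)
Your proposal follows essentially the same route as the paper's proof: fix the Loch Ness monster as $S_{1}$, cut along a separating curve $\alpha$, glue in a surface with two boundary components via the construction of Lemma \ref{L:CE1}, and let that inserted piece vary to produce uncountably many non-homeomorphic targets $S_{2}$ --- the paper takes the inserted piece to be an arbitrary infinite genus surface with at least two boundary components and leaves the end-space and uncountability bookkeeping implicit, while you allow a possibly planar noncompact piece and make that bookkeeping explicit (via $\Ends{S_{2}}\cong\Ends{S_{1}}\sqcup\Ends{W}$ and the spaces $\omega^{\xi}+1$, $\xi<\omega_{1}$), which is a refinement rather than a different argument. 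One small correction to your routing prescription: since $\alpha$ is separating, a curve crossing it does so an even number of times and hence its image meets $W$ in an even number ($\geq 2$) of arcs, never a single arc, so the routing should send each individual strand crossing $\alpha$ through an arc parallel to a fixed spanning arc of $W$ --- exactly what the paper's Figure \ref{s04} depicts.
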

\begin{proof}
The arguments are similar to those of the proof of lema \ref{L:CE1}. Let $S_{1}$ be the Loch Ness monster and $\alpha\in\comp{S_{1}}$ be a separating curve. Let $S$ be your favorite infinite genus surface and suppose that $S$ has at least two boundary components. We describe $f$ topologically. Cut  $S_{1}$ along $\alpha$ and then glue in a copy of $S$ as indicated in figure \ref{s04}. This produces $S_{2}$. The rest of the proof is analogous to the proof of lemma \ref{L:CE1}.\\
\end{proof}

\begin{bibdiv}
 \begin{biblist}


  \bib{Farb}{book}{
  AUTHOR = {Farb, Benson},
  AUTHOR = {Margalit, Dan},
     TITLE = {A primer on mapping class groups},
    SERIES = {Princeton Mathematical Series},
    VOLUME = {49},
 PUBLISHER = {Princeton University Press},
   ADDRESS = {Princeton, NJ},
      YEAR = {2012},
  }
  
  \bib{Fou1}{article}{
    AUTHOR = {Founar, Louis},
    AUTHOR = {Christophe, Kapoudjian},
  TITLE = {An infinite genus mapping class group and stable cohomology},
  JOURNAL = {Commun. Math. Phys. },
  VOLUME = {287},
  YEAR = {2009},
  NUMBER = {1},
  PAGES = {787-804},}
  
  \bib{Fou2}{article}{
AUTHOR = {Founar, Louis},
AUTHOR = {Nguyen,Maxime},
TITLE = {On the automorphisms group of the asymptotic pants complex of an infinite surface of genus zero},
     YEAR = {2013},
      EPRINT = {http://arxiv.org/abs/1308.6143},
}
  
  \bib{F}{article}{
  AUTHOR = {Freudenthal, Hans},
  TITLE = {\"{U}ber die {E}nden topologischer {R}\"aume und {G}ruppen},
  JOURNAL = {Math. Z.},
  FJOURNAL = {Mathematische Zeitschrift},
  VOLUME = {33},
  YEAR = {1931},
  NUMBER = {1},
  PAGES = {692--713},}

\bib{Fuji}{article}{
  AUTHOR = {Fujikawa, Ege},
     TITLE = {Modular groups acting on infinite dimensional Teichm�ller spaces},
    SERIES = {Contemporary Mathematics},
    VOLUME = {355},
       YEAR = {2004},
     PAGES = {239--253},
}

  \bib{HP}{book}{
  AUTHOR = {Penner, R. C.},
  AUTHOR = {Harer, J. L.},
     TITLE = {Combinatorics of train tracks},
    SERIES = {Annals of Mathematics Studies},
    VOLUME = {125},
 PUBLISHER = {Princeton University Press},
   ADDRESS = {Princeton, NJ},
      YEAR = {1992},
     PAGES = {xii+216},
  }
  
  \bib{Irmak}{article}{
  AUTHOR = {Irmak, Elmas},
     TITLE = {Complexes of nonseparating curves and mapping class groups},
   JOURNAL = {Michigan Math. J.},
  FJOURNAL = {The Michigan Mathematical Journal},
    VOLUME = {54},
      YEAR = {2006},
    NUMBER = {1},
     PAGES = {81--110},}

  \bib{Ivanov}{article}{
AUTHOR = {Ivanov, Nikolai V.},
     TITLE = {Automorphism of complexes of curves and of {T}eichm\"uller
              spaces},
   JOURNAL = {Internat. Math. Res. Notices},
  FJOURNAL = {International Mathematics Research Notices},
      YEAR = {1997},
    NUMBER = {14},
     PAGES = {651--666},}


  
  

  \bib{PM}{article}{
  AUTHOR = {Prishlyak, A. O.},
  AUTHOR =  {Mischenko, K. I.},
  TITLE = {Classification of noncompact surfaces with boundary},
  JOURNAL = {Methods Funct. Anal. Topology},
  FJOURNAL = {Methods of Functional Analysis and Topology},
  VOLUME = {13},
  YEAR = {2007},
  NUMBER = {1},
  PAGES = {62--66},}
  
  \bib{Ray}{article}{
  AUTHOR = {Raymond, Frank},
  TITLE = {The end point compactification of manifolds},
  JOURNAL = {Pacific J. Math.},
  FJOURNAL = {Pacific Journal of Mathematics},
  VOLUME = {10},
  YEAR = {1960},
  PAGES = {947--963},}
  
  \bib{R}{article}{
  AUTHOR = {Richards, Ian},
  TITLE = {On the classification of noncompact surfaces},
  JOURNAL = {Trans. Amer. Math. Soc.},
  FJOURNAL = {Transactions of the American Mathematical Society},
  VOLUME = {106},
  YEAR = {1963},
  PAGES = {259--269},}
  
  \bib{shackleton}{article}{
   author={Shackleton, Kenneth J.},
   title={Combinatorial rigidity in curve complexes and mapping class
   groups},
   journal={Pacific J. Math.},
   volume={230},
   date={2007},
   number={1},
   pages={217--232},
   }

  \bib{Sch}{article}{
  AUTHOR = {Schmutz Schaller, Paul},
     TITLE = {Mapping class groups of hyperbolic surfaces and automorphism
              groups of graphs},
   JOURNAL = {Compositio Math.},
  FJOURNAL = {Compositio Mathematica},
    VOLUME = {122},
      YEAR = {2000},
    NUMBER = {3},
     PAGES = {243--260},}
 \end{biblist}
\end{bibdiv}  
\end{document}